\documentclass[12pt,reqno]{amsart}

\usepackage[T1]{fontenc}
\usepackage[utf8]{inputenc}
\usepackage{lmodern}
\usepackage{amsmath,amssymb,mathtools,mathrsfs}
\usepackage{enumitem}
\usepackage{microtype}
\usepackage[colorlinks=true,linkcolor=blue,citecolor=blue,urlcolor=blue]{hyperref}

\allowdisplaybreaks
\newtheorem{theorem}{Theorem}[section]
\newtheorem{theoremintro}{Theorem}

\newtheorem{proposition}[theorem]{Proposition}
\newtheorem{lemma}[theorem]{Lemma}
\newtheorem{corollary}[theorem]{Corollary}
\theoremstyle{definition}
\newtheorem{definition}[theorem]{Definition}
\theoremstyle{remark}

\newcommand{\NN}{\mathbb N}
\newcommand{\Fp}{\mathcal F_p}
\newcommand{\Lip}{\operatorname{Lip}}
\newcommand{\Id}{\operatorname{Id}}
\newcommand{\dist}{\operatorname{dist}}
\newcommand{\spn}{\operatorname{span}}
\newcommand{\olsp}{\overline{\operatorname{span}}}

\newcommand{\norm}[1]{\left\lVert #1\right\rVert}
\newcommand{\abs}[1]{\left\lvert #1\right\rvert}
\newcommand{\set}[1]{\left\{#1\right\}}

\newcommand{\vertiii}[1]{\left\lvert\!\left\lvert\!\left\lvert #1\right\rvert\!\right\rvert\!\right\rvert}

\title[The Schur $p$-property of Lipschitz-free $p$-spaces]
{Structural consequences of the Schur $p$-property for Lipschitz-free $p$-spaces}

\author[F. Albiac]{Fernando Albiac}
\address{Institute for Advanced Materials and Mathematics (INAMAT$^{2}$) and Department of Mathematics, Statistics and Computer Sciences\\ Public University of Navarre\\
Campus de Arrosad\'{\i}a, 31006 Pamplona\\ Spain}
\email{fernando.albiac@unavarra.es}
\author[J. L. Ansorena]{Jos\'e L. Ansorena}
\address{Department of Mathematics and Computer Sciences\\
Universidad de La Rioja\\
Logro\~no\\
26004 Spain}
\email{joseluis.ansorena@unirioja.es}

\author[M. C\'uth]{Marek C\'uth}
\address{Faculty of Mathematics and Physics, Department of Mathematical Analysis\\
Charles University\\
186 75 Praha 8\\
Czech Republic}
\email{cuth@karlin.mff.cuni.cz}

\date{Working draft, September 2026}

\subjclass[2020]{Primary 46A16, 46B04; Secondary 46B85, 51F30}
\keywords{Lipschitz-free $p$-space, Schur $p$-property, $\ell_p$-saturation,
strong Schur $p$-property, operator dichotomy, Lipschitz lifting, canonical embedding}

\begin{document}

\begin{abstract}
Let $0<p<q\leq 1$. We show that the Schur $p$-property provides a
powerful structural principle for Lipschitz-free $p$-spaces. Our main
result asserts that $\mathcal{F}_p(M)$ has the Schur $p$-property for
every $q$-metric space $M$; when $M$ is compact, it has the strong
Schur $p$-property, with a constant depending only on $p$ and $q$.
As consequences, $\mathcal{F}_p(M)$ is $\ell_p$-saturated, contains no
isomorphic copy of an infinite-dimensional $r$-Banach space for
$p<r\leq 1$, and every bounded operator from a $p$-Banach space into
$\mathcal{F}_p(M)$ is either compact or fixes a copy of $\ell_p$.
These results settle Questions~6.1, 6.2, 6.5, and 6.6 from our recent
work \cite{AABCSchur} on the Schur $p$-property, together with several related problems.
They also confirm a prediction of  Kalton and the first-named author from 2009: no
infinite-dimensional $q$-Banach space has the $p$-Lipschitz lifting
property. Further applications reveal a sharp contrast between the
linear and Lipschitz structures of nonlocally convex spaces.
\end{abstract}

\thanks{F.\@ Albiac acknowledges the support of the Spanish Ministry for Science, Innovation, and Universities under Grant PID2025-167660NB-I00 funded by MICIU/AEI/10.13039/501100011033 and ERDF/EU. F.\@ Albiac and J.\@ L.\@ Ansorena acknowledge the support of the Spanish Ministry for Science, Innovation, and Universities under Grant PGC2018-095366-B-I00 for \emph{Functional Analysis Techniques in Approximation Theory and Applications (TAFPAA)}}

\maketitle

\section{Introduction}

\noindent Throughout the paper, all vector spaces are real and $0<p<1$ is fixed. The
Schur $p$-property was introduced in \cite[Definition~2.2]{AABCSchur}: a
$p$-Banach space has this property if every infinite bounded uniformly
separated subset contains a sequence equivalent to the canonical basis of
$\ell_p$.

The purpose of this paper is to establish the following universal result.
A $q$-metric is a distance $d$ such that $d^q$ is a metric; in particular,
the case $q=1$ is the usual metric case.

\begin{theoremintro}\label{thm:main-intro}
Let $0<p<q\leq1$ and let $M$ be a pointed $q$-metric space. Then
$\Fp(M)$ has the Schur $p$-property.
\end{theoremintro}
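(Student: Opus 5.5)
The key point is that a $q$-metric with $q>p$ makes $d^p$ "strictly more concave" than a $p$-metric, so molecules in $\Fp(M)$ cannot cancel efficiently at small scales. I would exploit this through a lower $\ell_p$-estimate on the quotient map $\ell_p(\mathcal M)\to\Fp(M)$, where $\mathcal M$ denotes the set of molecules. The plan has three steps: reduce to finitely supported elements, establish a disjointness-type lower estimate, and then run a gliding-hump extraction.

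\textbf{Step 1: reduction to well-supported elements.}
Let $(\mu_n)$ be bounded and $\delta$-separated in $\Fp(M)$. Finitely supported elements are dense, so by perturbation I may assume each $\mu_n$ is a finite combination of molecules $m_{x,y}=(\delta_x-\delta_y)/d(x,y)$. Passing to a subsequence and using Ramsey-type diagonalization, I would split into two cases.
\begin{enumerate}
\item[(a)] The supports can be made ``separated at many scales'' in the sense of the metric $d$: after removing a common limiting part, the pieces live in clusters whose mutual distances are large compared with their diameters.
\item[(b)] The mass concentrates near finitely many points at ever smaller scales.
\end{enumerate}
Because the sequence is separated, the differences $\mu_{2n}-\mu_{2n+1}$ have norm at least $\delta$. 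It therefore suffices to show that some sequence of such differences, or of the $\mu_n$ themselves, dominates the $\ell_p$ basis. The upper $\ell_p$ estimate is automatic in any $p$-Banach space.

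\textbf{Step 2: the core lower estimate.}
This step is the main obstacle. I would prove a lemma of the following kind. If $\nu_1,\dots,\nu_N\in\Fp(M)$ are supported on sets $A_1,\dots,A_N$ with
\[
\dist(A_i,A_j)\ge K\max(\operatorname{diam}A_i,\operatorname{diam}A_j)
\]
(after adjoining the base point appropriately), then
\[
\Bigl\|\sum_i\nu_i\Bigr\|^p\ge c(p,q,K)\sum_i\|\nu_i\|^p .
\]
To prove it, I would construct, for each $i$, a norming $p$-Lipschitz-type functional. In the $p$-Banach setting, duality is too weak, since the dual may be trivial. So I would instead work directly with optimal representations $\sum a_k m_{x_k,y_k}$. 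Any molecule joining two different clusters can be rerouted through cluster representatives, and the point is to control the cost of this rerouting.

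This is exactly where $q>p$ enters. Using the $q$-triangle inequality, a path of length $\ell$ split into pieces $\ell_j$ satisfies $\ell^q\le\sum\ell_j^q$. Hence $\ell^p$ is controlled by $\sum \ell_j^p$ with a gain factor $K^{-(1-p/q)}$-type at scale separation $K$. Summing the resulting geometric series over scales makes the rerouting cost a small fraction of the total, and this yields the lower estimate. Without the hypothesis $q>p$ (for example for $M=[0,1]$ with $q=p$), this step fails, which is consistent with the strictness in the hypothesis.

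\textbf{Step 3: extraction.}
Finally, I would extract, via a gliding-hump argument, a subsequence whose essential supports are $K$-separated in the above multiscale sense, with tails of norm at most $\varepsilon_n$. Here I would take $\sum\varepsilon_n^p$ small. In case (a) this separation comes from spreading the supports apart. In case (b) it comes from the rapidly shrinking scales around the limit points, which I would enforce by passing to a subsequence with geometric scale decay. Applying Step 2 then gives
\[
\Bigl\|\sum a_n\nu_n\Bigr\|\gtrsim\Bigl(\sum|a_n|^p\Bigr)^{1/p}.
\]
Together with the automatic upper estimate, this shows the sequence is equivalent to the canonical basis of $\ell_p$.
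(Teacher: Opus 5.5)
There is a genuine gap, and it lies in Steps 1 and 3 rather than in Step 2.

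\textbf{Step 2 does not use $q>p$.} For mass-zero pieces (combinations of molecules inside each $A_i$), your lemma holds in \emph{every} $p$-metric space. Fix $a_i\in A_i$ and send $x\in A_i$ to $\delta_{A_i}(x)-\delta_{A_i}(a_i)$, placed in the $i$th coordinate of $(\bigoplus_i\Fp(A_i))_p$. For $x\in A_i$, $y\in A_j$, $i\neq j$, the images differ by at most $\bigl((\operatorname{diam}A_i)^p+(\operatorname{diam}A_j)^p\bigr)^{1/p}\leq 2^{1/p}K^{-1}d(x,y)$, so the map is Lipschitz. Its linearization, combined with Theorem~\ref{thm:canonical-input}, gives your estimate with $c=\Gamma_p^{-p}\min\{1,K^p/2\}$. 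No rerouting of optimal representations is needed; for $p<1$ that rerouting would amount to reproving that theorem.

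\textbf{So the reduction must fail.} Steps 1 and 3 never use $q>p$ either. If they were valid, your argument would prove the Schur $p$-property for $\Fp$ of $[0,1]$ with the $p$-metric $\abs{s-t}^{1/p}$. That space is $L_p[0,1]$ via $\delta(t)\leftrightarrow 1_{[0,t]}$, and it fails the property because the Rademacher functions span $\ell_2$. The failure occurs on exactly the configurations that matter. Under this identification the Rademacher functions become $\sum_k(-1)^k\bigl(\delta((k+1)2^{-n})-\delta(k2^{-n})\bigr)$, whose supports fill all of $[0,1]$ at scale $2^{-n}$. Such sequences fall under neither (a) nor (b). No gliding hump can make their supports satisfy $\dist(A_i,A_j)\geq K\max(\operatorname{diam}A_i,\operatorname{diam}A_j)$, and nested pieces as in your case (b) violate that hypothesis as well. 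Your heuristic $\ell^q\leq\sum\ell_j^q$ is never turned into a statement about pieces that occupy the same region at different scales.

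\textbf{What is missing} is a device that separates \emph{scales} rather than locations; this is where $p<q$ genuinely enters. In the paper, for a finite pointed $F$ and scales $b<a$, one takes the $p$-metric $\rho_b^p=\min\{d^p,b^{p-q}d^q\}$, passes to the quotient by $F$, and truncates at $a^p$. The resulting contraction $T_{F,a,b}$ satisfies $\norm{T_{F,a,b}(\delta_M(x)-\delta_M(y))}^p\leq\min\{h^p,h^qb^{p-q},a^p\}$ with $h=d(x,y)$. At the same time, for any fixed finitely supported $\gamma$ living in $[F]_{a/2^{1/p}}$, it recovers $\dist(\gamma,\Fp(F;M))$ up to the factor $\Gamma_p$ once $b$ is small enough. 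Along lacunary scales $a_{j+1}\leq b_j/4\leq a_j/16$, these operators assemble into one bounded operator into an $\ell_p$-sum precisely because $\sum_k16^{k(p-q)}<\infty$. A Ramsey diagonal argument then extracts the $\ell_p$-subsequence directly from the given set.

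\textbf{Further points.} An $\ell_p$-estimate for the differences $\mu_{2n}-\mu_{2n+1}$, as in your Step 1, would not by itself yield a sequence inside the set. Even the scale-separation argument handles only sets with Kalton's property. The paper needs separate localization arguments through disjoint quotients when that property fails and when $M$ is unbounded, followed by clipping in $(\ell_\infty(I),\norm{\cdot}_\infty^{1/q})$. None of this has a counterpart in your case split.
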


There are no boundedness or separability assumptions on $M$. Passing to
the completion does not change its Lipschitz free $p$-space, as explained in
Section~\ref{sec:preliminaries}. For compact bases we obtain a
quantitative strengthening.

\begin{theoremintro}\label{thm:compact-intro}
For every $0<p<q\leq1$ there is a constant $K_{p,q}$ such that
$\Fp(K)$ has the $K_{p,q}$-strong Schur $p$-property whenever $K$ is a
compact pointed $q$-metric space. In particular, $\Fp(K)$ has
the strong Schur $p$-property for every compact metric space $K$.
\end{theoremintro}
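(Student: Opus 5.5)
We will prove the compact statement by a quantitative gliding-hump argument. The approach exploits the fact that, for a compact $q$-metric space $K$ with $q>p$, the unit ball of $\Fp(K)$ is ``almost finite-dimensional at small scales''. Throughout we use the strong Schur $p$-property in the form of \cite{AABCSchur}: there is a constant $K$ such that every $\delta$-separated sequence $(\mu_n)$ in $B_{\Fp(K)}$ has a subsequence satisfying
\[
\norm{\sum_n a_n\mu_{k_n}}\ge \frac{\delta}{K}\Bigl(\sum_n\abs{a_n}^p\Bigr)^{1/p}
\]
for all finitely supported scalars. The upper $\ell_p$-estimate is automatic in any $p$-Banach space.

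\textbf{Step 1 (reduction to finitely supported measures).} Molecules are dense, so we may perturb each $\mu_n$ to a finitely supported element without affecting the separation. We then fix a sequence of finite $\epsilon_j$-nets $N_j\subset K$ with $\epsilon_j\downarrow 0$ rapidly. We also use the retraction-type operators $P_j\colon\Fp(K)\to\Fp(N_j)$ induced by the nearest-point map. This map is not Lipschitz, so instead we use a Lipschitz partition of unity subordinate to the net, in the spirit of the Kalton decomposition. The point is that for a $q$-metric with $q>p$, a $p$-sum of the form
\[
\sum_j \epsilon_j^{\,q}\cdot(\text{number of scales})
\]
converges geometrically. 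This is where $p<q$ enters, and it is what makes the constants depend only on $p$ and $q$.

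\textbf{Step 2 (Kalton-type decomposition).} We aim to show that every $\mu\in\Fp(K)$ admits a decomposition
\[
\mu=\sum_j \Delta_j\mu,\qquad \Delta_j\mu\in \Fp(N_{j})
\ \text{ with support at scale } \epsilon_j,
\]
satisfying the $p$-norm equivalence
\[
\Bigl(\sum_j\norm{\Delta_j\mu}^p\Bigr)^{1/p}\le C_{p,q}\norm{\mu}.
\]
The reverse inequality is trivial by the $p$-triangle inequality. This is the analogue of Kalton's theorem that $\mathcal F(M)$ embeds into $(\sum \mathcal F(M_k))_{\ell_1}$ for uniformly discrete pieces. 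For $p<1$ and $q>p$ we expect an honest $\ell_p$-sum with a uniform constant, because the scale-$\epsilon_j$ pieces interact through errors of size $\epsilon_j^{q}$, which are $p$-summable once the scales are lacunary. Each $\Fp(N_j)$ is finite-dimensional.

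\textbf{Step 3 (gliding hump).} Take a $\delta$-separated bounded sequence. By compactness of the finite-dimensional pieces, a diagonal argument lets us pass to a subsequence for which $\Delta_j\mu_n$ converges for every $j$. Replacing the sequence by the differences $\nu_n=\mu_{2n}-\mu_{2n+1}$ yields a bounded sequence, still bounded below by $\delta$, whose components at each fixed scale tend to zero. A standard perturbation then produces a further subsequence that is a small perturbation of a sequence with disjoint scale-blocks. By Step 2 such a block sequence satisfies the lower $\ell_p$-estimate with constant $\delta/C_{p,q}$. To return from the differences to the original $\mu_n$ with a lower estimate, we use the lemma from \cite{AABCSchur} that deduces the (strong) Schur $p$-property from its difference form.

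The compact metric case is $q=1$. The non-compact Theorem~\ref{thm:main-intro} would follow from a separable reduction combined with a localization to bounded or compact pieces, which we do not address here.

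\textbf{Main obstacle.} We expect Step 2 to be the main difficulty: obtaining a uniform $\ell_p$-decomposition across scales, with no logarithmic loss in the number of net points. We would first try to construct $\Delta_j$ via Lipschitz partitions of unity adapted to the $q$-metric. We would then control the commutator errors by $\epsilon_j^{q}/\epsilon_{j+1}^{q}$, choosing lacunary scales so that the $p$-th powers of these errors sum geometrically.
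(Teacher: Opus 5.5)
There is a genuine gap, and it is exactly the step you flagged: Step~2 is not merely unproved, it is false. Step~3 uses the $\Delta_j$ linearly: you take differences componentwise and then apply the Step~2 inequality to combinations $\sum_n a_n\nu_{k_n}$. So Step~2 has to supply bounded linear maps $\Delta_j\colon\Fp(K)\to\Fp(N_j)$ into finite-dimensional spaces with $\mu=\sum_j\Delta_j\mu$ and $\bigl(\sum_j\norm{\Delta_j\mu}^p\bigr)^{1/p}\leq C_{p,q}\norm{\mu}$. Such maps would make $\Fp(K)$ a complemented subspace of $\bigl(\bigoplus_j\Fp(N_j)\bigr)_p$. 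They would also give it the bounded approximation property via the finite-rank operators $S_m=\sum_{j\leq m}\Delta_j$.

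This is impossible within the scope of the theorem:
\begin{enumerate}[label=\textup{(\alph*)}]
\item For $K=([0,1],\abs{s-t}^{1/q})$ with $q<1$, every real Lipschitz function on $K$ is constant. Hence $\Fp(K)^*=\{0\}$, and every bounded linear map from $\Fp(K)$ into a finite-dimensional space vanishes. All your $\Delta_j$ would be zero; this is the space of Theorem~\ref{thm:trivial-dual}.
\item Even for $K=[0,1]$ with $q=1$, passing to Banach envelopes would make $\mathcal F([0,1])\simeq L_1[0,1]$ complemented in an $\ell_1$-sum of finite-dimensional spaces. That would make it a Schur space, which it is not.
\item The compact space in Corollary~\ref{cor:compact-without-BAP} would acquire the bounded approximation property.
\end{enumerate}
So no fixed multiscale linear discretization of $\Fp(K)$ into finite-dimensional pieces with a uniform $\ell_p$-bound exists, however the partitions of unity are chosen.

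The missing idea is to give up on an isomorphic decomposition. Instead, build an operator that is merely bounded but has a large diagonal, constructed adaptively along the given separated set. The paper picks lacunary scales $a_{j+1}\leq b_j/4\leq a_j/16$ and a finite $a_j/2^{1/p}$-net $F_j$. It then picks a new $w_j$ with $\dist(w_j,\Fp(F_j;K))>\rho$ for a fixed $\rho<\delta/2^{1/p}$; this is possible because a separated set cannot stay close to a finite-dimensional subspace.

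The $j$th detector $T_j$ linearizes the quotient map of $K$ onto $K/F_j$, equipped with the flattened and truncated $p$-metric $\tau^p=\min\{\rho_{b_j,F_j}^p,a_j^p\}$, where $\rho_b^p=\min\{d^p,b^{p-q}d^q\}$. Its target is an infinite-dimensional free space, and it \emph{annihilates} the finite-dimensional part $\Fp(F_j;K)$ instead of projecting onto it. So trivial duals cause no trouble. The scale $b_j$ is fixed only after a finitely supported approximant $\gamma_j$ of $w_j$ has been chosen. It is taken small enough that $\tau$ agrees with the ordinary quotient distance on the support of $\gamma_j$. The uniform canonical-embedding constant then gives $\norm{T_j\gamma_j}\geq\Gamma_p^{-1}\dist(\gamma_j,\Fp(F_j;K))$.

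The hypothesis $p<q$ is used twice. First, concavity of $t\mapsto\min\{t^{p/q},b^{p-q}t\}$ makes $\rho_b$ a $p$-metric. Second, the three-scale bound $\min\{h^p,h^qb_j^{p-q},a_j^p\}$ sums over lacunary scales to $H_{p,q}h^p$. Hence the $T_j$ assemble into a bounded operator into $\bigl(\bigoplus_jE_j\bigr)_p$ of norm at most $H_{p,q}^{1/p}$.

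A Ramsey-type diagonal detector lemma then yields a subsequence of $(w_j)$ itself with lower $\ell_p$-estimate arbitrarily close to $\delta/(2^{1/p}\Gamma_pH_{p,q}^{1/p})$. This is where $K_{p,q}=\Gamma_pH_{p,q}^{1/p}$ comes from. No passage to differences is needed. In your plan, the transfer from $(\nu_n)$ back to a subsequence of the original set, with a constant depending only on $p$ and $q$, also rests on an unverified lemma.
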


The constant is independent of the dimension and covering numbers of the
base. More explicitly, if $\Gamma_p$ is the uniform canonical-embedding
constant from Theorem~\ref{thm:canonical-input}, we may take
\[
 K_{p,q}=\Gamma_p
 \left(\frac1{1-16^{p-q}}+1+\frac1{1-16^{-p}}\right)^{1/p}.
\]
As a consequence, the strong Schur $p$-property also holds for free
$p$-spaces over arbitrary subsets of finite-dimensional normed spaces.

Among the principal structural consequences are the following.

\begin{theoremintro}\label{thm:consequences-intro}
Let $0<p<q\leq1$ and let $M$ be a $q$-metric space. Then $\Fp(M)$ is $\ell_p$-saturated and contains no isomorphic copy of an infinite-dimensional $r$-Banach space for any $p<r\leq1$. Moreover, every bounded operator from a $p$-Banach space into $\Fp(M)$ is either compact or fixes a copy of $\ell_p$.
\end{theoremintro}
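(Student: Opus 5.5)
The plan is to derive Theorem~\ref{thm:consequences-intro} from Theorem~\ref{thm:main-intro} using only general consequences of the Schur $p$-property for $p$-Banach spaces. I would prove three abstract facts about a $p$-Banach space $X$ with the Schur $p$-property, and then apply them to $X=\Fp(M)$, which has the property by Theorem~\ref{thm:main-intro}. The key observation in every case is that the defining property needs only an infinite, bounded, uniformly separated set. Such sets exist in the unit ball of every infinite-dimensional $p$-Banach space, by Riesz's lemma, whose proof uses only the quasi-norm and closedness of finite-dimensional subspaces. For the operator dichotomy, they also appear as images of bounded sets under noncompact maps.

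\emph{$\ell_p$-saturation.} Let $Y\subseteq \Fp(M)$ be a closed infinite-dimensional subspace. I would pick, inductively, unit vectors $y_n\in Y$ with $\dist(y_n,\spn\{y_1,\dots,y_{n-1}\})\ge 1/2$. Then $\{y_n\}$ is bounded and uniformly separated, so by the Schur $p$-property of $\Fp(M)$ it contains a subsequence equivalent to the unit vector basis of $\ell_p$. Its closed span lies in $Y$, so $Y$ contains a copy of $\ell_p$.

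\emph{No infinite-dimensional $r$-Banach subspaces, $p<r\le 1$.} Suppose $Z\subseteq\Fp(M)$ is isomorphic to an infinite-dimensional $r$-Banach space. By the previous step, $Z$ contains a sequence $(z_n)$ equivalent to the $\ell_p$ basis. Since $Z$ is $r$-normed up to isomorphism, we get
\[
N^{1/p}\approx\norm{\sum_{n=1}^N z_n}\lesssim\Big(\sum_{n=1}^N\norm{z_n}^r\Big)^{1/r}\approx N^{1/r}.
\]
This is impossible as $N\to\infty$, because $1/p>1/r$.

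\emph{Operator dichotomy.} Let $T\colon X\to\Fp(M)$ be bounded and noncompact, with $X$ a $p$-Banach space. Then $T(B_X)$ is not totally bounded, so there are $x_n\in B_X$ and $\delta>0$ with $\norm{Tx_n-Tx_m}\ge\delta$ for $n\ne m$. The set $\{Tx_n\}$ is bounded and uniformly separated, so a subsequence $(Tx_{n_k})$ is equivalent to the $\ell_p$ basis. In particular
\[
\Big\|\sum a_kTx_{n_k}\Big\|\gtrsim\Big(\sum|a_k|^p\Big)^{1/p}.
\]
On the other side, since $X$ is $p$-normed and $\norm{x_{n_k}}\le1$,
\[
\Big\|\sum a_k x_{n_k}\Big\|\le\Big(\sum|a_k|^p\Big)^{1/p}.
\]
Hence $(x_{n_k})$ is also equivalent to the $\ell_p$ basis, and $T$ restricted to its closed span is an isomorphism onto its image. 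So $T$ fixes a copy of $\ell_p$.

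The main point to check is that the Schur $p$-property yields a sequence equivalent to the canonical basis of $\ell_p$ in the full sense, i.e.\ with both lower and upper estimates. The upper estimate is automatic for bounded sequences in a $p$-Banach space, so the lower estimate is the real content; I would take this from the definition in \cite{AABCSchur}. Beyond that, the argument is routine, and Theorem~\ref{thm:main-intro} carries all the weight.
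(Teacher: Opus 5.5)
Your proposal is correct and follows the paper's route. Saturation comes from an infinite $1/2$-separated subset of $S_Y$: you build it with Riesz's lemma, while the paper cites a proposition from its earlier work. The higher-convexity obstruction is the same $N^{1/p}$ versus $N^{1/r}$ comparison for an $\ell_p$-sequence, which you run directly in $Z$ rather than isolating as a lemma about equivalent $r$-norms on $\ell_p$. The operator dichotomy is exactly the paper's argument, transferring the lower $\ell_p$-estimate from $(Tx_{n_k})$ back to $(x_{n_k})$.
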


Our results lead to answers to \cite[Questions 6.1, 6.2, 6.5, 6.6]{AABCSchur}. Several more applications are gathered in Section~\ref{sec:applications}.

The paper is organized as follows. Section~\ref{sec:preliminaries}
collects the preliminary results. Section~\ref{sec:detectors} develops
the diagonal detector and lacunary assembly lemmas.
Section~\ref{sec:quotients} constructs the quotient detectors and proves
the strong Schur $p$-property for compact bases.
Section~\ref{sec:bounded} treats bounded bases, and
Section~\ref{sec:general} proves the theorem for arbitrary $q$-metric
spaces. Section~\ref{sec:applications} develops the applications.

\section{Preliminaries}
\label{sec:preliminaries}

A \emph{$p$-Banach space} is a complete quasi-normed space $X$ whose
quasi-norm satisfies
\[
 \norm{x+y}^p\leq \norm{x}^p+\norm{y}^p
 \qquad(x,y\in X).
\]
A pointed $p$-metric space is a triple $(M,d,0)$ such that $d$ is symmetric,
vanishes exactly on the diagonal, and $d^p$ is a metric. Every $q$-metric space is a $p$-metric space when $0<p\leq q\leq1$,
and the same implication holds for $q$-Banach and $p$-Banach spaces.
In particular, every metric space is a $p$-metric space. Topological
notions for a $p$-metric $d$ refer to the topology of the metric $d^p$. For a subset $A\subseteq M$ and $r>0$, write
\[
 [A]_r=\{x\in M:\dist_d(x,A)\leq r\}.
\]
For a quasi-normed space $X$, $B_X$ and $S_X$ denote its closed unit
ball and its unit sphere, respectively.

We use the standard construction and universal property from
\cite[Theorem~4.5]{AACD}. For every pointed $p$-metric space $(M,d,0)$ there
are a $p$-Banach space $\Fp(M,d)$ and an isometric pointed map
\[
 \delta_{M,d}\colon M\longrightarrow \Fp(M,d)
\]
whose linear span is dense and such that every pointed Lipschitz map
$f\colon M\to Y$ into a $p$-Banach space has a unique bounded
linearization
\[
 \widehat f\colon\Fp(M,d)\longrightarrow Y,
 \qquad \widehat f\delta_{M,d}=f,
 \qquad \norm{\widehat f}=\Lip(f).
\]
We omit $d$ from the notation when no confusion is possible. Linearizations
respect composition.

By \cite[Proposition~4.17]{AACD}, if $A$ is a dense pointed subset of
a $p$-metric space $M$, the canonical map $\Fp(A)\to\Fp(M)$ is an
onto linear isometry. In particular, if $0<p\leq q\leq1$ and
$(\overline M,\overline d,0)$ is the completion of a pointed
$q$-metric space $(M,d,0)$, then
\begin{equation}\label{eq:completion-isometry}
 \Fp(M,d)\equiv\Fp(\overline M,\overline d)
\end{equation}
canonically and isometrically. Here the completion is obtained by
completing the metric $d^q$ and taking its $q$th root. Henceforth,
all ambient $q$-metric spaces, $0<q\leq1$, are assumed complete.
This causes no loss of generality for the free-space assertions.
Subsets carry the induced distance and need not be closed; the same
dense-subset result identifies $\Fp(A)$ isometrically with
$\Fp(\overline A)$, where $\overline A$ is its closure in a complete
ambient space.

If $A\subseteq M$ contains the base point, set
\[
 \Fp(A;M)=\olsp\set{\delta_M(a):a\in A}\subseteq\Fp(M).
\]
The inclusion $A\hookrightarrow M$ linearizes to a contraction from
$\Fp(A)$ onto $\Fp(A;M)$. We shall use the following uniform theorem.

\begin{theorem}[Uniform canonical embeddings]\label{thm:canonical-input}
For every $0<p\leq1$ there is a constant $\Gamma_p\geq1$ such that, for
every inclusion $0\in A\subseteq M$ of pointed $p$-metric spaces, the
canonical map
\[
 \iota_{A,M}\colon\Fp(A)\longrightarrow \Fp(A;M)
\]
is an isomorphism onto $\Fp(A;M)$ and
\[
 \norm{\iota_{A,M}}\leq1,
 \qquad
 \norm{\iota_{A,M}^{-1}}\leq\Gamma_p.
\]
\end{theorem}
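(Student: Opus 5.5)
The plan is to reduce the statement to a Lipschitz extension problem. The bound $\norm{\iota_{A,M}}\le1$ is automatic: $\iota_{A,M}$ is the linearization of the isometric inclusion $A\hookrightarrow M$ composed with $\delta_M$. For the reverse inequality it suffices to construct a Lipschitz map
\[
 f\colon M\longrightarrow\Fp(A),\qquad f|_A=\delta_A,\qquad \Lip(f)\le\Gamma_p ,
\]
with $\Gamma_p$ depending only on $p$. Indeed, $\widehat f\colon\Fp(M)\to\Fp(A)$ then satisfies $\widehat f\circ\iota_{A,M}=\Id$ on $\spn\delta_A(A)$, and by density on all of $\Fp(A)$. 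This gives $\norm{\mu}\le\Gamma_p\norm{\iota_{A,M}\mu}$, so $\iota_{A,M}$ is an isomorphism onto its (closed) range $\Fp(A;M)$. No Hahn–Banach argument is available for $p<1$, so the extension must be built by hand. The point is that the target is not an arbitrary $p$-Banach space: it contains the richly structured set $\delta_A(A)$, and we may use the $p$-triangle inequality, which sums geometric series of "jumps" for free.

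For the construction, write $D(x)=\dist(x,A)$ for $x\in M\setminus\overline A$. (Since $M$ is complete, we may replace $A$ by $\overline A$ via the dense-subset identification, and points of $\overline A$ are handled by continuity.) For each $k\in\mathbb Z$ choose a retraction-like selection $r_k\colon M\to A$ with $d(x,r_k(x))\le 2\max\{D(x),2^{k}\}$. I will define $f(x)$ as a telescoping average over scales,
\[
 f(x)=\delta_A(r_{k(x)}(x))+\sum_{j\ge0}\lambda_j\bigl(\delta_A(r_{k(x)+j+1}(x))-\delta_A(r_{k(x)+j}(x))\bigr),
\]
where $2^{k(x)}\approx D(x)$ and the weights $\lambda_j$ interpolate in $D(x)$. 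The interpolation is there to remove the discontinuity that a single nearest-point selection would create. Note that a single step function $x\mapsto\delta_A(r(x))$ is never Lipschitz. The $p$-triangle inequality controls such telescoping sums by $\bigl(\sum_j 2^{-jp}\bigr)^{1/p}\cdot 2^{k}$, a constant depending only on $p$ times $D(x)$.

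The Lipschitz estimate for $\norm{f(x)-f(y)}$ splits into two cases.

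\emph{Far case:} $d(x,y)\ge c\max\{D(x),D(y)\}$. Here I would just use
\[
 \norm{f(x)-\delta_A(r(x))}\lesssim D(x),\qquad \norm{\delta_A(r(x))-\delta_A(r(y))}= d(r(x),r(y))\lesssim d(x,y),
\]
combined with the $p$-triangle inequality.

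\emph{Near case:} $d(x,y)\ll D(x)\approx D(y)$. Then $x$ and $y$ live at comparable scales, and one must compare the two selections $r_k(x)$ and $r_k(y)$ at each scale $2^k\gtrsim D(x)$. These satisfy $d(r_k(x),r_k(y))\lesssim 2^k$, which is only bounded, not small. So the near case can only work if the selections are made coherently. I would choose the $r_k$ to be constant on the cells of a partition of $M\setminus A$ of mesh $\approx2^k$, and average over a one-parameter family of shifted partitions (equivalently, use the weights $\lambda_j$ as a function of $D$ and of a distance-to-cell-boundary function). The aim is that the difference $f(x)-f(y)$ only picks up jumps whose total weight is $\lesssim d(x,y)/2^k$ at scale $k$. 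Summing over $k$ in $p$-th powers then gives $\norm{f(x)-f(y)}^p\lesssim d(x,y)^p\sum_k 2^{-(k-k(x))p}$, a constant depending only on $p$.

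I expect this near case to be the main obstacle. In general metric spaces there are no Lipschitz partitions of unity with bounded overlap; Lee–Naor type results need doubling. So the averaging must exploit $p<1$ rather than geometry. If the direct partition argument fails, the first alternative I would try is to define $f$ on a dyadic chain of nets $N_k\subseteq M$ at scale $2^k$ around $A$, via nearest-point maps in $A$. I would prove Lipschitz bounds net-to-net and then pass to all of $M$ by a further telescoping along scales $k\to-\infty$, again summed with the $p$-triangle inequality, so that the constant $\Gamma_p$ comes out as a geometric series in $2^{-p}$.
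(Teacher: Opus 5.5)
\textbf{There is a genuine gap: your reduction targets a statement that is false.} A $\Gamma_p$-Lipschitz map $f\colon M\to\Fp(A)$ with $f|_A=\delta_A$ would make $\iota_{A,M}\widehat f$ a projection of norm at most $\Gamma_p$ from $\Fp(M)$ onto $\Fp(A;M)$. That is uniform complementation, which is much more than isomorphic embedding.

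Every Banach space is a $p$-Banach space. So for any $L$-Lipschitz pointed $g\colon A\to Y$ with $Y$ Banach, the map $\widehat g\circ f$ would be a $\Gamma_pL$-Lipschitz extension of $g$ to $M$. Your $f$ would therefore give a universal Lipschitz extension constant for Banach-valued maps on arbitrary metric pairs, and this fails.

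For a concrete counterexample, take $M=L_1[0,1]$ and let $A$ be the closed span of the Rademacher functions. This $A$ is a copy of $\ell_2$ and is not complemented in $L_1$. Let $\beta_A\colon\Fp(A)\to A$ be the barycenter map. Then $\beta_A\circ f$ would be a Lipschitz retraction of $L_1[0,1]$ onto the reflexive subspace $A$. By Lindenstrauss' theorem on Lipschitz retractions onto subspaces complemented in their biduals, $A$ would then be linearly complemented, which it is not. For finite $A$, the Johnson--Lindenstrauss lower bound $\gtrsim\sqrt{\log n/\log\log n}$ for extending maps from $n$-point sets into Banach spaces gives the same conclusion.

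So the near case you flag is not a technical hurdle that coherent partitions or shifted averaging could overcome. Exploiting $p<1$ cannot help: the $p$-triangle inequality is weaker than the triangle inequality. Any construction inside $\Fp(A)$ transfers through bounded linear maps to Banach targets, where the extension is impossible. The same obstruction already appears at $p=1$: there $\iota_{A,M}$ is an isometric embedding, yet $\mathcal F(A)$ need not be complemented in $\mathcal F(M)$.

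\textbf{What you actually need is a norming family, not one extension of $\delta_A$.} By the universal property, $\norm{\iota_{A,M}\mu}$ is the supremum of $\norm{\widehat G\,\iota_{A,M}\mu}=\norm{\widehat{G|_A}\,\mu}$ over $1$-Lipschitz pointed maps $G$ from $M$ into $p$-Banach spaces. It therefore suffices, for each finitely supported $\mu\in\Fp(A)$, to find one such $G$ with $\norm{\widehat{G|_A}\,\mu}\geq\Gamma_p^{-1}\norm{\mu}$. Here $G|_A$ need not equal $\delta_A$, nor even be injective. For $p=1$ this is exactly McShane's extension plus Hahn--Banach.

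The paper gives no internal argument to compare yours with: it simply quotes Theorem~3.5 of the Albiac--Ansorena paper on extensions. But your route, as designed, cannot reach the statement.
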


\begin{proof}
This is precisely \cite[Theorem~3.5]{AlbiacAnsorenaExtensions}. That theorem
is formulated for arbitrary inclusions of pointed $p$-metric spaces, which
is essential below because the auxiliary distances $\rho_b$ need not be
metrics.
\end{proof}

A vector in $\Fp(M)$ is \emph{finitely supported} if it is a finite
linear combination of Dirac vectors. A finite pointed set containing all
points used in such a representation will be called a finite pointed
support of the vector. These vectors form a dense subspace of $\Fp(M)$.
Finite families of distinct nonzero Dirac vectors are linearly independent.
Indeed, on a finite pointed set $F$, every scalar function vanishing at
$0$ is Lipschitz. The coordinate functions, linearized by the universal
property, show that the vectors $\delta_F(x)$, $x\in F\setminus\{0\}$,
are independent. Theorem~\ref{thm:canonical-input} transfers this fact
to their images in any ambient free $p$-space.

\begin{definition}[\cite{AABCSchur}]
A $p$-Banach space $X$ has the \emph{Schur $p$-property} if every infinite
bounded uniformly separated subset of $X$ contains a sequence equivalent to
the canonical basis of $\ell_p$.
\end{definition}

Thus a sequence $(x_n)$ is equivalent to the canonical basis of $\ell_p$ if
there are constants $c,C>0$ such that
\begin{equation}\label{eq:ellp-equivalence}
 c^p\sum_n\abs{a_n}^p
 \leq
 \norm{\sum_n a_nx_n}^p
 \leq
 C^p\sum_n\abs{a_n}^p
\end{equation}
for every finitely supported scalar sequence $(a_n)$.

\begin{lemma}[Stability]\label{lem:Schur-stability}
The Schur $p$-property passes to closed subspaces and is invariant under
linear isomorphisms.
\end{lemma}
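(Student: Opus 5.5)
Both assertions follow directly from the definition; the only point needing care is that the relevant notions survive the passage in each case. I will treat the two parts separately.

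\emph{Closed subspaces.} Let $Y$ be a closed subspace of a $p$-Banach space $X$ with the Schur $p$-property. With the restricted quasi-norm, $Y$ is a $p$-Banach space: the $p$-subadditivity inequality is inherited, and completeness follows from closedness. Let $S\subseteq Y$ be infinite, bounded and uniformly separated. Since the quasi-norm of $Y$ is that of $X$, the set $S$ has the same three properties when viewed in $X$. Hence $S$ contains a sequence $(x_n)$ satisfying \eqref{eq:ellp-equivalence} in $X$. Every finite combination $\sum_n a_nx_n$ lies in $Y$ and has the same quasi-norm there, so the two-sided estimate \eqref{eq:ellp-equivalence} holds verbatim in $Y$.

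\emph{Isomorphisms.} Let $T\colon X\to Z$ be a linear isomorphism onto, with $X$ having the Schur $p$-property. Then $T$ and $T^{-1}$ are bounded, so
\[
 \norm{T^{-1}}^{-1}\norm{x}\leq\norm{Tx}\leq\norm{T}\,\norm{x}\qquad(x\in X).
\]
Take $S\subseteq Z$ infinite, bounded and uniformly separated, say $\norm{z-z'}\geq\varepsilon$ for distinct $z,z'\in S$. Then $T^{-1}(S)$ is infinite, since $T^{-1}$ is injective. It is bounded by $\norm{T^{-1}}\sup_{z\in S}\norm{z}$. It is uniformly separated with constant $\varepsilon/\norm{T}$, because $\norm{T^{-1}z-T^{-1}z'}\geq\norm{T}^{-1}\norm{z-z'}$.

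By hypothesis $T^{-1}(S)$ contains a sequence $(x_n)$ with constants $c,C$ in \eqref{eq:ellp-equivalence}. Put $z_n=Tx_n\in S$. Linearity gives $\sum_n a_nz_n=T\bigl(\sum_n a_nx_n\bigr)$. Combining this with the two-sided bound for $T$, the sequence $(z_n)$ satisfies \eqref{eq:ellp-equivalence} with constants $c\norm{T^{-1}}^{-1}$ and $C\norm{T}$. So $(z_n)$ is equivalent to the canonical basis of $\ell_p$.

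For the combined statement, take $X$ with the Schur $p$-property and a closed subspace $Y$ of a space isomorphic to $X$, or isomorphic to a closed subspace of $X$. Composing the two parts covers these cases. Neither step presents an obstacle; the only point to verify is that "uniformly separated" is preserved under $T^{-1}$, which is exactly the lower estimate $\norm{Tx}\leq\norm{T}\,\norm{x}$ used above.
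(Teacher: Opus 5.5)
Your proposal is correct and follows the same route as the paper. For subspaces you note that boundedness, separation, and the estimates \eqref{eq:ellp-equivalence} are all computed with the same quasi-norm. For an isomorphism $T$ you pull the set back by $T^{-1}$ and use $\norm{T}$ and $\norm{T^{-1}}$ to control boundedness, separation, and the two constants, exactly as the paper's brief argument does.

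The only blemish is wording: in your last sentence, $\norm{Tx}\leq\norm{T}\,\norm{x}$ is the upper bound for $T$, which yields the lower separation bound for $T^{-1}(S)$; it is not a ``lower estimate''.
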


\begin{proof}
A bounded uniformly separated subset of a closed subspace has the same
properties in the ambient space, and the estimates
\eqref{eq:ellp-equivalence} are unchanged. If $T\colon X\to Y$ is an
isomorphism, then $T$ and $T^{-1}$ preserve boundedness and uniform
separation up to their operator norms; applying $T$ to the selected sequence
only changes the two constants in \eqref{eq:ellp-equivalence}.
\end{proof}

Finite-dimensional quasi-Banach spaces trivially have the Schur
$p$-property, because their bounded sets are totally bounded.

\section{Detector principles}
\label{sec:detectors}

The arguments below use bounded operators with values in an $\ell_p$-sum.
A positive diagonal already detects an $\ell_p$-subsequence: decay of
the coordinates of each fixed vector and a Ramsey diagonalization
supply the required small off-diagonal terms. We first prove this
principle and then record the lacunary assembly lemma used for the
quotient detectors.

\subsection{The diagonal detector lemma}

For later quantitative statements, fix $K\geq1$ and recall that a $p$-Banach space $X$ has
the \emph{$K$-strong Schur $p$-property} if, for every $\delta>0$, every
$K_0>K$, and every infinite $\delta$-separated set $A\subseteq S_X$, there
is an infinite subset $B\subseteq A$ such that
\begin{equation}\label{eq:strong-Schur-lower}
 \norm{\sum_{x\in B}a_x x}
 \geq
 \frac{\delta}{K_0 2^{1/p}}
 \left(\sum_{x\in B}\abs{a_x}^p\right)^{1/p}
\end{equation}
for every finitely supported scalar family $(a_x)_{x\in B}$. This is the
form of \cite[Definition~2.5]{AABCSchur} appropriate for $p$-Banach spaces and $K\geq1$:
the corresponding upper estimate with constant $K_0$ is automatic on the
unit sphere from the $p$-triangle inequality, since $K_0> K\geq1$. The strong Schur $p$-property passes to subspaces
and is invariant under linear isomorphisms; see the discussion following
\cite[Definition~2.5]{AABCSchur}.

\begin{lemma}[Diagonal detector lemma]
\label{lem:diagonal-detector}
\label{lem:triangular-detector}
Let $X$ be a $p$-Banach space, let $(E_n)_{n\in\NN}$ be $p$-Banach
spaces, and let
\[
 T\colon X\longrightarrow
 E:=\left(\bigoplus_{n=1}^{\infty}E_n\right)_p
\]
be bounded. Denote the coordinate projections by $\pi_n\colon E\to E_n$.
If $(x_n)$ is bounded in $X$ and
\[
 \alpha:=\limsup_{n\to\infty}\norm{\pi_nTx_n}>0,
\]
then, for every $0<c<\alpha/\norm T$, there is a subsequence
$(x_{n_j})$ such that
\begin{equation}\label{eq:triangular-lower}
 c^p\sum_j\abs{a_j}^p
 \leq\norm{\sum_j a_jx_{n_j}}^p
 \leq\left(\sup_n\norm{x_n}\right)^p\sum_j\abs{a_j}^p
\end{equation}
for every finitely supported scalar sequence $(a_j)$.
\end{lemma}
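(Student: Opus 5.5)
The upper estimate in \eqref{eq:triangular-lower} is immediate from the $p$-triangle inequality: $\norm{\sum_j a_jx_{n_j}}^p\leq\sum_j\abs{a_j}^p\norm{x_{n_j}}^p$. So the whole task is the lower estimate. The plan is to choose the subsequence so that the vectors $Tx_{n_j}$ are almost disjointly supported in the $\ell_p$-sum, with each $Tx_{n_j}$ carrying most of its relevant mass in coordinate $n_j$. Then we bound from below using only the diagonal coordinates.

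\emph{Choosing the parameters.} Fix $c<\alpha/\norm T$ and choose $\beta$ with $c\norm T<\beta<\alpha$. Then choose summable errors $\varepsilon_j>0$ whose size is to be fixed at the end. By the definition of $\limsup$, pass first to an infinite set of indices $n$ with $\norm{\pi_nTx_n}>\beta$.

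\emph{Inductive construction.} Choose $n_1<n_2<\dots$ inductively, using two facts.
\begin{itemize}
\item For each fixed vector $y\in E$ we have $\sum_n\norm{\pi_ny}^p<\infty$, so $\norm{\pi_ny}\to0$.
\item Given $n_1,\dots,n_{j-1}$, pick $N_j$ so large that $\sum_{m\geq N_j}\norm{\pi_mTx_{n_i}}^p<\varepsilon_j$ for every $i<j$. This handles the tails of the earlier vectors.
\end{itemize}
Next pick $n_j\geq N_j$ from the good index set. We also need the new vector to have small mass at the earlier diagonal coordinates. For this, use a Ramsey/diagonal argument: pass to a further subsequence along which $\pi_{n_i}Tx_n$ is small for each fixed $i$. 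This is not automatic, since the sequence $(x_n)$ is merely bounded and $(\pi_mTx_n)_n$ need not tend to zero for fixed $m$. Here is the trick. For a fixed coordinate $m$, restrict $\norm{\pi_m T x_n}$ to $n$ ranging over a subsequence. We only need to evaluate it at the coordinates $n_i$, $i<j$, which are finitely many. Before choosing $n_j$, we cannot make $\pi_{n_i}Tx_{n_j}$ small. However, we may instead estimate the vector $\sum_j a_jTx_{n_j}$ only on the coordinates $n_j$ and use an \emph{upper triangular} bound. Concretely, on coordinate $n_k$ the contributions are of three kinds:
\begin{itemize}
\item the diagonal term $a_k\pi_{n_k}Tx_{n_k}$;
\item terms from $i<k$, which are controlled by the tail choice, giving error $\sum_{i<k}\abs{a_i}^p\varepsilon_k$;
\item terms from $i>k$, namely $\pi_{n_k}Tx_{n_i}$.
\end{itemize}

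\emph{The main obstacle.} The terms with $i>k$ are the real difficulty. I will handle them with Ramsey's theorem, or with an iterated Cantor diagonal argument. The values $\pi_{n_k}Tx_{n}$ for $n$ later are bounded by $\norm T\sup\norm{x_n}$, but they are not necessarily small. What saves us is that $\sum_k\norm{\pi_{n_k}Tx_n}^p\leq\norm{Tx_n}^p\leq C$ for each $n$. So for a fixed $n$, only few $k$ can carry mass at least $\eta$. The plan is to apply a standard Rosenthal-type disjointification lemma, phrased for an $\ell_p$-sum, to the bounded family $(\norm{\pi_{m}Tx_n}^p)_{m,n}$. This is an infinite matrix whose rows have uniformly bounded $\ell_1$ sums. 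Rosenthal's lemma for sequences of measures, or a direct Ramsey argument, yields an infinite subset $L$ with $\sum_{m\in L,m\neq n}\norm{\pi_mTx_n}^p<\eta$ for all $n\in L$. Taking $\eta$ small then makes the off-diagonal terms negligible in both directions.

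\emph{Concluding the bound.} Once this is arranged, we compute
\[
\norm{T\textstyle\sum_j a_jx_{n_j}}^p\geq\sum_k\norm{\pi_{n_k}(\cdots)}^p\geq\sum_k\abs{a_k}^p(\beta^p-\eta')
\]
by the reverse $p$-triangle inequality. Choosing $\eta'$ with $\beta^p-\eta'\geq (c\norm T)^p$ and dividing by $\norm T^p$ gives \eqref{eq:triangular-lower}.
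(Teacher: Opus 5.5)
Your proof is correct, but it takes a different route from the paper's. The paper does not use Rosenthal's lemma. It colors pairs $n<k$ according to whether $\norm{\pi_nTx_k}<t$, and rules out an infinite homogeneous set of the bad color via $\norm{Tx_{n_m}}^p\geq\sum_{j<m}\norm{\pi_{n_j}Tx_{n_m}}^p$. This yields only entrywise smallness at one fixed threshold. The paper then runs a recursion with graded thresholds $\theta_j$, where $\sum_j\theta_j^p<\alpha_0^p-c^p\norm T^p$. At step $j$, Ramsey controls $\pi_{n_j}Tx_{n_k}$ for $k>j$, and the decay $\norm{\pi_nTx_{n_i}}\to0$ of the already chosen vectors controls $k<j$. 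The result is $\norm{\pi_{n_j}Tx_{n_k}}<\theta_j$ for all $k\neq j$.

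You instead apply Rosenthal's disjointification lemma to the uniformly bounded measures $\mu_n(A)=\sum_{m\in A}\norm{\pi_mTx_n}^p$, with total mass at most $\norm T^p\sup_k\norm{x_k}^p$, restricted to the indices with $\norm{\pi_nTx_n}>\beta$. This gives column-sum control $\sum_{m\in L\setminus\{n\}}\norm{\pi_mTx_n}^p<\eta$ on both sides of the diagonal at once. Interchanging the order of summation in the reverse $p$-triangle estimate then gives the factor $\beta^p-\eta$ directly.

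Two remarks on your write-up follow from this. First, your tail selection via $N_j$ and $\varepsilon_j$ is superfluous once you have $L$. Any enumeration of $L$, or of any infinite subset of it (the conclusion is inherited), already works, so that part can be dropped. Second, your parenthetical alternative ``a direct Ramsey argument'' is less direct than it sounds. A plain pair-coloring gives entrywise smallness at one threshold, which is not summable over the selected coordinates. Making it summable needs graded thresholds in a recursive selection, which is essentially the paper's proof.

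The trade-off is this: your argument is shorter and conceptually cleaner, but it rests on Rosenthal's lemma, whose proof is itself a nontrivial combinatorial argument. The paper's argument is self-contained and needs only Ramsey's theorem for pairs and the decay of coordinates of a fixed vector.
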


\begin{proof}
Fix $c$ as above and choose $\alpha_0$ with
$c\norm T<\alpha_0<\alpha$. The set
$I_0=\{n:\norm{\pi_nTx_n}>\alpha_0\}$ is infinite. Choose positive
numbers $(\theta_j)$ such that
\begin{equation}\label{eq:diagonal-error-budget}
 \sum_{j=1}^{\infty}\theta_j^p
 <\alpha_0^p-c^p\norm T^p.
\end{equation}

We first perform a Ramsey diagonalization. Given an infinite set
$I\subseteq\NN$ and $t>0$, color a pair $\{n,k\}\subseteq I$, $n<k$,
by whether $\norm{\pi_nTx_k}<t$ or $\norm{\pi_nTx_k}\geq t$.
Infinite Ramsey's theorem gives an infinite homogeneous subset.
The second color is impossible: if $n_1<\cdots<n_m$ belong to such
a subset, then
\[
 \norm{Tx_{n_m}}^p
 \geq\sum_{j=1}^{m-1}\norm{\pi_{n_j}Tx_{n_m}}^p
 \geq(m-1)t^p,
\]
contrary to the boundedness of $(Tx_n)$. Thus every infinite $I$
contains an infinite $J$ such that $\norm{\pi_nTx_k}<t$ whenever
$n,k\in J$ and $n<k$.

We recursively choose $n_j$ and infinite sets $I_j$. Set $n_0=0$. We recursively choose increasing indices $n_j$ and
infinite sets $I_j$. At step $j$, choose an integer $N_j>n_{j-1}$
such that
\[
 \norm{\pi_nTx_{n_i}}<\theta_j
 \qquad(n\geq N_j,\ 1\leq i<j).
\]
Such an integer exists because, for each previously chosen index
$n_i$, the inclusion $Tx_{n_i}\in E$ implies
$\norm{\pi_nTx_{n_i}}\to0$ as $n\to\infty$, and there are only
finitely many indices $i<j$. For $j=1$, this condition is empty,
and we take $N_1=1$. The set $I'_{j-1}=\{n\in I_{j-1}:n\geq N_j\}$ is infinite, since only finitely many indices have been removed
from $I_{j-1}$. Apply the preceding observation to $I'_{j-1}$
with $t=\theta_j$, obtaining an infinite set $J_j\subseteq I'_{j-1}$.
Put $n_j=\min J_j$ and $I_j=J_j\setminus\{n_j\}$. Then
\[
 \norm{\pi_{n_j}Tx_{n_j}}>\alpha_0,
 \qquad
 \norm{\pi_{n_j}Tx_{n_k}}<\theta_j\quad(k\ne j).
\]
For $k<j$ the second inequality holds by the initial discard at
step $j$; for $k>j$ it follows from $n_k\in I_j\subseteq J_j$.

For a finitely supported scalar sequence $(a_j)$, the reverse
$p$-triangle inequality in each selected coordinate gives
\begin{align*}
 \norm{T\sum_k a_kx_{n_k}}^p&\geq\sum_j\norm{\pi_{n_j}T\sum_k a_kx_{n_k}}^p\\
 &\geq\sum_j\left(
      \abs{a_j}^p\norm{\pi_{n_j}Tx_{n_j}}^p
      -\sum_{k\ne j}\abs{a_k}^p
         \norm{\pi_{n_j}Tx_{n_k}}^p
    \right)\\
 &\geq\alpha_0^p\sum_j\abs{a_j}^p
       -\sum_j\theta_j^p\sum_{k\ne j}\abs{a_k}^p\\
 &\geq\left(\alpha_0^p-\sum_j\theta_j^p\right)
       \sum_k\abs{a_k}^p
 \geq c^p\norm T^p\sum_k\abs{a_k}^p.
\end{align*}
Boundedness of $T$ gives the lower estimate
in \eqref{eq:triangular-lower}. The upper estimate follows from
the boundedness of $(x_n)$ and the $p$-triangle inequality.
\end{proof}

\subsection{Lacunary assembly}

\begin{lemma}[Lacunary assembly]\label{lem:lacunary-assembly}
Let $0<p<q\leq1$, let $M$ be a pointed $q$-metric space, let $(E_j)_{j\geq1}$ be $p$-Banach
spaces, and let $\Phi_j\colon M\to E_j$ be pointed maps. Suppose that there
are $C>0$ and positive numbers $(a_j)$ and $(b_j)$ such that
\begin{equation}\label{eq:lacunary-scales}
 b_j\leq\frac{a_j}{4},
 \qquad
 a_{j+1}\leq\frac{b_j}{4}
 \qquad(j\geq1),
\end{equation}
and
\begin{equation}\label{eq:lacunary-three-scale}
 \norm{\Phi_j(x)-\Phi_j(y)}^p
 \leq
 C\min\set{d(x,y)^p,d(x,y)^qb_j^{p-q},a_j^p}
 \qquad(x,y\in M).
\end{equation}
Put
\begin{equation}\label{eq:lacunary-Hp}
 H_{p,q}=
 \frac1{1-16^{p-q}}+1+\frac1{1-16^{-p}}.
\end{equation}
Then
\[
 \Phi(x)=(\Phi_j(x))_{j=1}^{\infty}
\]
defines a pointed Lipschitz map from $M$ into
$(\bigoplus_{j=1}^{\infty}E_j)_p$, and
\[
 \Lip(\Phi)^p\leq CH_{p,q}.
\]
\end{lemma}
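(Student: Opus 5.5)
Fix distinct $x,y\in M$, put $t=d(x,y)$, and note that $\norm{\Phi(x)-\Phi(y)}^p=\sum_j\norm{\Phi_j(x)-\Phi_j(y)}^p$ by definition of the $\ell_p$-sum. It therefore suffices to show
\[
 \sum_{j}\min\set{t^p,\,t^qb_j^{p-q},\,a_j^p}\leq H_{p,q}\,t^p ,
\]
and then multiply by $C$ using \eqref{eq:lacunary-three-scale}. Pointedness and well-definedness follow at once. Indeed, $\Phi_j(0)=0$, and for $y=0$ the same estimate shows that $\Phi(x)\in(\bigoplus_j E_j)_p$.

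Conditions \eqref{eq:lacunary-scales} give a single interlaced decreasing chain $a_1\geq 4b_1\geq 16a_2\geq 64 b_2\geq\cdots$. In particular, $a_{j+1}\leq a_j/16$ and $b_{j+1}\leq b_j/16$. I split the indices according to the position of $t$ relative to the scales.

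\emph{Large-scale indices:} $J_1=\{j: b_j\geq t\}$. Here I use the middle term $t^qb_j^{p-q}=t^p(t/b_j)^{q-p}$. Listing $J_1$ in increasing order, it is an initial segment $j\leq j_0$, and $b_{j_0}\geq t$ with $b_{j}\geq 16^{j_0-j}b_{j_0}$. Since $q>p$, the sum is at most
\[
 t^p\sum_{m\geq0}16^{-m(q-p)}=\frac{t^p}{1-16^{p-q}} .
\]

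\emph{Small-scale indices:} $J_3=\{j: a_j< t\}$. Here I use $a_j^p$. These indices form a final segment $j\geq j_1$, and $a_j\leq 16^{-(j-j_1)}a_{j_1}<16^{-(j-j_1)}t$. This gives at most $t^p/(1-16^{-p})$.

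\emph{Intermediate indices:} the remaining indices satisfy $b_j<t\leq a_j$. Because $a_{j+1}\leq b_j/4<b_j$, at most one $j$ can satisfy this, since $t\leq a_{j+1}<b_j<t$ is impossible. For that single index I use the term $t^p$, which contributes the middle summand $1$ in $H_{p,q}$.

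Adding the three contributions gives exactly $H_{p,q}t^p$, so $\Lip(\Phi)^p\leq CH_{p,q}$. The only delicate point is the bookkeeping of the geometric ratio $16$, coming from the two factors of $4$ in \eqref{eq:lacunary-scales}, and the check that the three index sets are disjoint and exhaustive. The exponent $q-p>0$ is what makes the large-scale tail summable, and this is where the hypothesis $p<q$ is used. Note also that the argument never uses the $q$-metric inequality itself; only the numerical bound \eqref{eq:lacunary-three-scale} matters.
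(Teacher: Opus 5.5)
Your proof is correct and follows the paper's argument essentially verbatim. You use the same three-way split of the indices relative to $t$, the geometric ratio $16$ from the interlaced chain, the bound $t^qb_j^{p-q}$ on the large-scale indices, a single intermediate index, and $a_j^p$ on the small-scale tail; the only immaterial difference is that you place the boundary case $t=a_j$ in the intermediate set rather than in the small-scale one. (For uniqueness of the intermediate index when $j<k$ are not consecutive, just note that $a_k\leq a_{j+1}$, which your chain already gives.)
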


\begin{proof}
The scale assumptions give
\[
 a_{j+1}\leq\frac{a_j}{16},
 \qquad
 b_{j+1}\leq\frac{b_j}{16}.
\]
In particular, both sequences converge to zero, and the intervals
$(b_j,a_j)$ are pairwise disjoint and ordered from right to left.

Fix distinct $x,y\in M$ and put $h=d(x,y)$. Split the indices into
\[
 I_1=\set{j:h\leq b_j},
 \qquad
 I_2=\set{j:b_j<h<a_j},
 \qquad
 I_3=\set{j:a_j\leq h}.
\]
The set $I_1$ is either empty or a finite initial segment. If
$I_1=\{1,\ldots,j_0\}$, then
\[
 b_j\geq16^{j_0-j}b_{j_0}\qquad(j\leq j_0).
\]
Since $p-q<0$ and $h\leq b_{j_0}$, it follows that
\[
 \sum_{j\in I_1}h^qb_j^{p-q}
 \leq
 h^qb_{j_0}^{p-q}
 \sum_{k=0}^{j_0-1}16^{k(p-q)}
 \leq
 \frac{h^qb_{j_0}^{p-q}}{1-16^{p-q}}
 \leq
 \frac{h^p}{1-16^{p-q}}.
\]

Since the intervals $(b_j,a_j)$ are pairwise disjoint, $I_2$ contains at
most one index, and hence
\[
 \sum_{j\in I_2}h^p\leq h^p.
\]

Finally, $I_3$ is a nonempty tail. If $j_1=\min I_3$, then
\[
 \sum_{j\in I_3}a_j^p
 \leq
 a_{j_1}^p\sum_{k=0}^{\infty}16^{-kp}
 \leq
 \frac{h^p}{1-16^{-p}}.
\]
Therefore, by \eqref{eq:lacunary-three-scale},
\begin{align*}
 \sum_{j=1}^{\infty}\norm{\Phi_j(x)-\Phi_j(y)}^p
 &\leq
 C\left(
   \sum_{j\in I_1}h^qb_j^{p-q}
   +\sum_{j\in I_2}h^p
   +\sum_{j\in I_3}a_j^p
 \right)
 \\
 &\leq CH_{p,q}h^p.
\end{align*}
Taking $y=0$ shows that $\Phi(x)$ belongs to
$(\bigoplus_{j=1}^{\infty}E_j)_p$, and the same estimate gives
$\Lip(\Phi)^p\leq CH_{p,q}$.
\end{proof}

\section{Quotient detectors and compact bases}
\label{sec:quotients}

We use the metric quotient from \cite[Section~2.2]{AACDEmbeddability}
and its universal property to identify the quotient of a free $p$-space
by the closed span of Dirac vectors over a closed subset.

\subsection{Quotients of \texorpdfstring{$p$}{p}-metric spaces}

Let $(M,d,0)$ be a pointed $p$-metric space and let $A\subseteq M$ be
closed and contain $0$. Following \cite[Section~2.2]{AACDEmbeddability},
let $M/A$ be the pointed $p$-metric space obtained by identifying all
points of $A$ with a single base point $*$. Write $\pi_A(x)=[x]$ for
the quotient map and $r_A(x)=\dist_d(x,A)$. Its distance is
\begin{equation}\label{eq:quotient-metric}
 d_A([x],[y])^p
 =\min\{d(x,y)^p,r_A(x)^p+r_A(y)^p\}.
\end{equation}
In particular, $d_A([x],*)=r_A(x)$ and $\pi_A$ is contractive.
By \cite[Proposition~2.6]{AACDEmbeddability}, for every pointed
$p$-metric space $Z$, each Lipschitz map $f\colon M\to Z$ vanishing
on $A$ factors uniquely as $f=g\pi_A$, where
$g\colon M/A\to Z$ is pointed and $\Lip(g)=\Lip(f)$.
Applied to $p$-Banach targets, this gives the following consequence.

\begin{lemma}[Free spaces over quotients]\label{lem:free-quotient}
The linearization of $\delta_{M/A}\pi_A$ induces an isometric
isomorphism
\[
 \Fp(M)/\Fp(A;M)\longrightarrow\Fp(M/A,d_A).
\]
\end{lemma}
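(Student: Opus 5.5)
The plan is to build the map, check that it is onto and kills $\Fp(A;M)$, and then prove the isometry by duality with Lipschitz maps.

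Write $Q\colon\Fp(M)\to\Fp(M/A,d_A)$ for the linearization of the pointed map $\delta_{M/A}\pi_A$. This map is Lipschitz with constant at most $1$, because $\pi_A$ is contractive and $\delta_{M/A}$ is an isometry. Hence $\norm{Q}\leq1$.

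\emph{Step 1: $Q$ induces a map on the quotient.} For $a\in A$ we have $Q\delta_M(a)=\delta_{M/A}(*)=0$. By linearity and continuity, $Q$ vanishes on $\Fp(A;M)$. So $Q$ factors through a map
\[
 \widetilde Q\colon\Fp(M)/\Fp(A;M)\longrightarrow\Fp(M/A,d_A)
\]
with $\norm{\widetilde Q}\leq1$. Here the quotient carries the usual quotient $p$-norm, and it is a $p$-Banach space because $\Fp(A;M)$ is closed.

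\emph{Step 2: an inverse.} Let $q\colon\Fp(M)\to\Fp(M)/\Fp(A;M)$ be the quotient map. The map $f=q\circ\delta_M\colon M\to\Fp(M)/\Fp(A;M)$ has the following properties:
\begin{itemize}
 \item it is Lipschitz with $\Lip(f)\leq1$;
 \item it vanishes on $A$;
 \item its target is a $p$-Banach space, hence a pointed $p$-metric space under $\norm{u-v}$.
\end{itemize}
By \cite[Proposition~2.6]{AACDEmbeddability}, $f=g\pi_A$ for a pointed map $g\colon M/A\to\Fp(M)/\Fp(A;M)$ with $\Lip(g)=\Lip(f)\leq1$.

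The universal property of $\Fp(M/A)$ then gives a linearization $\widehat g$ with $\norm{\widehat g}\leq1$. On Dirac vectors,
\[
 \widehat g\,\widetilde Q\,q\,\delta_M(x)=\widehat g\,\delta_{M/A}([x])=g([x])=q\,\delta_M(x),
\]
and
\[
 \widetilde Q\,\widehat g\,\delta_{M/A}([x])=\widetilde Q\,q\,\delta_M(x)=\delta_{M/A}([x]).
\]
Every point of $M/A$ is of the form $[x]$. The spans of these vectors are dense in the respective spaces, since the image of a dense set under the onto map $q$ is dense. So by continuity $\widehat g$ and $\widetilde Q$ are mutually inverse contractions. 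Therefore $\widetilde Q$ is an onto isometric isomorphism.

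\emph{Main obstacle.} The only delicate point is applying \cite[Proposition~2.6]{AACDEmbeddability} with the quotient space as target. One must check that it is a pointed $p$-metric space in the sense required there. Nondegeneracy of $\norm{\cdot}$ holds because $\Fp(A;M)$ is closed, and $\norm{\cdot}^p$ is a metric because the quotient quasi-norm inherits the $p$-triangle inequality. I would verify these two facts explicitly before invoking the factorization.
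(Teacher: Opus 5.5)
Your proposal is correct and follows the paper's proof essentially step for step. You factor the linearization of $\delta_{M/A}\pi_A$ through the quotient to get a contraction, build the inverse contraction by factoring $q\,\delta_M$ through $\pi_A$ via \cite[Proposition~2.6]{AACDEmbeddability} and then linearizing, and use density of the Dirac spans to show the two contractions are mutually inverse. (One small point: the ``duality with Lipschitz maps'' announced in your opening sentence never actually appears, since only the universal property is used.)
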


\begin{proof}
Let $X=\Fp(M)$, let $Y=\Fp(A;M)$, and let $Q_A\colon X\to X/Y$
be the linear quotient map. Let $T=\widehat{\delta_{M/A}\pi_A}$. By the universal property,
$\norm T\leq1$. Since $T\delta_M(a)=0$ for every $a\in A$ and
$Y=\olsp\{\delta_M(a):a\in A\}$, continuity gives $Y\subseteq\ker T$.
Thus
\[
 U\colon X/Y\longrightarrow\Fp(M/A,d_A),\qquad U(u+Y)=Tu,
\]
is a well-defined linear map. For every $u\in X$ and $y\in Y$,
$\norm{U(u+Y)}=\norm{T(u-y)}\leq\norm{u-y}$. Taking the infimum
over $y\in Y$ gives
\[
 \norm{U(u+Y)}\leq\inf_{y\in Y}\norm{u-y}=\norm{u+Y}_{X/Y}.
\]
Hence $U$ is contractive. By construction, $UQ_A=T$, so
$UQ_A\delta_M=\delta_{M/A}\pi_A$.

Conversely, $Q_A\delta_M\colon M\to X/Y$ is contractive and vanishes
on $A$. The universal property from
\cite[Proposition~2.6]{AACDEmbeddability} gives a contraction
$g\colon M/A\to X/Y$ with $g\pi_A=Q_A\delta_M$.
Its linearization $V\colon\Fp(M/A,d_A)\to X/Y$ is a contraction.
For every $x\in M$ we have
\[
 VUQ_A\delta_M(x)=Q_A\delta_M(x),\qquad
 UV\delta_{M/A}([x])=\delta_{M/A}([x]).
\]
The vectors in these identities have dense linear spans in the
respective spaces. Hence $VU=\Id_{X/Y}$ and
$UV=\Id_{\Fp(M/A,d_A)}$. Thus $U$ and $V$ are inverse contractions,
so $U$ is an isometric isomorphism.
\end{proof}

\begin{lemma}[Comparison of quotient detectors]
\label{lem:quotient-comparison}
Let $(M,d,0)$ and $(N,\sigma,0)$ be pointed $p$-metric spaces,
let $C\subseteq M$ be closed and pointed, and let $S\subseteq M$
be finite and pointed. Write $\pi_C\colon M\to(M/C,d_C)$ for
the quotient map. Suppose that $f\colon M\to N$ is pointed
and Lipschitz and that $D>0$ satisfies
\begin{equation}\label{eq:quotient-comparison-hypothesis}
 d_C(\pi_C(x),\pi_C(y))
 \leq D\sigma(f(x),f(y))
 \qquad(x,y\in S).
\end{equation}
Writing $T_f=\widehat{\delta_Nf}$, we have
\begin{equation}\label{eq:quotient-comparison}
 \dist(\gamma,\Fp(C;M))\leq D\Gamma_p\norm{T_f\gamma}
 \qquad(\gamma\in\spn\delta_M(S)).
\end{equation}
In particular, let $A\subseteq M$ be closed and pointed, and let
$D\geq1$. If
\begin{equation}\label{eq:quotient-depth-comparison}
 \dist_d(x,C)\leq D\dist_d(x,A)\qquad(x\in S),
\end{equation}
then, for every $\gamma\in\spn\delta_M(S)$,
\[
 \dist(\gamma,\Fp(C;M))
 \leq D\Gamma_p\dist(\gamma,\Fp(A;M)).
\]
\end{lemma}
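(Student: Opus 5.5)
The plan is to turn both sides of \eqref{eq:quotient-comparison} into norms in free spaces over finite sets and compare those by a Lipschitz map between the finite sets. By Lemma~\ref{lem:free-quotient}, applied to the closed pointed set $C$, we have
\[
\dist(\gamma,\Fp(C;M))=\norm{\gamma+\Fp(C;M)}=\norm{T_{\pi_C}\gamma}_{\Fp(M/C,d_C)},
\qquad T_{\pi_C}=\widehat{\delta_{M/C}\pi_C}.
\]
Write $\gamma=\sum_{x\in S}c_x\delta_M(x)$. Then $T_{\pi_C}\gamma=\sum_x c_x\delta_{M/C}(\pi_C(x))$, which is the image under the contraction $\iota_{\pi_C(S),M/C}$ of the vector
\[
\gamma'=\sum_{x\in S} c_x\delta_{\pi_C(S)}(\pi_C(x))\in\Fp(\pi_C(S)).
\]
Hence $\dist(\gamma,\Fp(C;M))\le\norm{\gamma'}_{\Fp(\pi_C(S))}$.

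On the other side, $T_f\gamma=\sum_x c_x\delta_N(f(x))$ lies in $\Fp(f(S);N)$. Set
\[
\gamma''=\sum_{x\in S}c_x\delta_{f(S)}(f(x))\in\Fp(f(S)),
\]
so that $\iota_{f(S),N}\gamma''=T_f\gamma$. Theorem~\ref{thm:canonical-input} then gives
\[
\norm{\gamma''}_{\Fp(f(S))}\le\Gamma_p\norm{T_f\gamma}.
\]

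It remains to show $\norm{\gamma'}\le D\norm{\gamma''}$. Define $g\colon f(S)\to\pi_C(S)$ by $g(f(x))=\pi_C(x)$.
\begin{itemize}
\item It is well defined: if $f(x)=f(y)$ with $x,y\in S$, then \eqref{eq:quotient-comparison-hypothesis} forces $d_C(\pi_C(x),\pi_C(y))=0$.
\item It is pointed, since $f(0)=0$ and $\pi_C(0)=*$.
\item It satisfies $\Lip(g)\le D$, again by \eqref{eq:quotient-comparison-hypothesis}.
\end{itemize}
The linearization of $\delta_{\pi_C(S)}g$ has norm at most $D$ and sends $\gamma''$ to $\gamma'$. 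Chaining the three estimates yields \eqref{eq:quotient-comparison}. The main point needing care is this bookkeeping: the theorem is used only once, on the $N$ side, and only its contractive direction is used on the $M/C$ side. That is why a single factor $\Gamma_p$ appears.

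For the ``in particular'' statement, take $N=M/A$, $\sigma=d_A$ and $f=\pi_A$, which is contractive and pointed. For $x,y\in S$, by \eqref{eq:quotient-metric} and \eqref{eq:quotient-depth-comparison},
\[
d_C([x],[y])^p=\min\{d(x,y)^p,\,r_C(x)^p+r_C(y)^p\}
\le\min\{d(x,y)^p,\,D^p(r_A(x)^p+r_A(y)^p)\}\le D^p d_A([x],[y])^p,
\]
where the last step uses $D\ge1$. So \eqref{eq:quotient-comparison-hypothesis} holds. Moreover, $T_{\pi_A}=UQ_A$ with $U$ the isometry from Lemma~\ref{lem:free-quotient}, so $\norm{T_{\pi_A}\gamma}=\dist(\gamma,\Fp(A;M))$. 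Substituting this into \eqref{eq:quotient-comparison} gives the claim.
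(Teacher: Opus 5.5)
Your proof is correct and follows essentially the same route as the paper. You identify $\dist(\gamma,\Fp(C;M))$ with $\norm{\widehat{\delta_{M/C}\pi_C}\,\gamma}$ via Lemma~\ref{lem:free-quotient}, lift $T_f\gamma$ to $\Fp(f(S))$ at the cost of $\Gamma_p$, and push it forward by the $D$-Lipschitz map $f(x)\mapsto\pi_C(x)$. The only cosmetic difference is that you factor this map through $\Fp(\pi_C(S))$ followed by the contractive canonical inclusion, whereas the paper sends $f(S)$ directly into $\Fp(M/C,d_C)$ via $\delta_{M/C}$. Your treatment of the ``in particular'' clause, taking $N=M/A$ and $f=\pi_A$, is identical to the paper's.
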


\begin{proof}
By \eqref{eq:quotient-comparison-hypothesis}, the map
\[
 g\colon f(S)\longrightarrow\Fp(M/C,d_C),
 \qquad g(f(x))=\delta_{M/C}(\pi_C(x)),
\]
is well defined, pointed, and $D$-Lipschitz. Indeed, if $f(x)=f(y)$,
then \eqref{eq:quotient-comparison-hypothesis} gives
$\pi_C(x)=\pi_C(y)$. Moreover, $g(f(0))=0$, and the Lipschitz
estimate follows from the isometry of $\delta_{M/C}$.

Write $\gamma=\sum_{x\in S}c_x\delta_M(x)$ and put
\[
 v=\sum_{x\in S}c_x\delta_{f(S)}(f(x))\in\Fp(f(S)).
\]
Its image under $\widehat g$ is
\[
 \widehat g v
 =\sum_{x\in S}c_x\delta_{M/C}(\pi_C(x))
 =\widehat{\delta_{M/C}\pi_C}\,\gamma.
\]
By Lemma~\ref{lem:free-quotient}, this vector has norm
$\dist(\gamma,\Fp(C;M))$. On the other hand, the canonical image
of $v$ in $\Fp(N)$ is $T_f\gamma$. Since
$\norm{\widehat g}\leq D$, Theorem~\ref{thm:canonical-input} gives
\[
 \dist(\gamma,\Fp(C;M))
 =\norm{\widehat g v}
 \leq D\norm{v}_{\Fp(f(S))}
 \leq D\Gamma_p\norm{T_f\gamma}.
\]

For the last assertion, let $\pi_A\colon M\to(M/A,d_A)$ be the
quotient map and write $r_A(x)=\dist_d(x,A)$ and
$r_C(x)=\dist_d(x,C)$. Since $D\geq1$,
\eqref{eq:quotient-depth-comparison} and the quotient formula give,
for $x,y\in S$,
\begin{align*}
 d_C(\pi_C(x),\pi_C(y))^p
 &=\min\{d(x,y)^p,r_C(x)^p+r_C(y)^p\}\\
 &\leq D^p\min\{d(x,y)^p,r_A(x)^p+r_A(y)^p\}\\
 &=D^p d_A(\pi_A(x),\pi_A(y))^p.
\end{align*}
Apply the first assertion with $(N,\sigma)=(M/A,d_A)$ and
$f=\pi_A$. By Lemma~\ref{lem:free-quotient}, we obtain
\[
 \dist(\gamma,\Fp(C;M))
 \leq D\Gamma_p\norm{\widehat{\delta_{M/A}\pi_A}\,\gamma}
 =D\Gamma_p\dist(\gamma,\Fp(A;M)).
 \qedhere
\]
\end{proof}

\subsection{Flattening and truncating the quotient metric}

Let $0<p<q\leq1$ and let $(M,d,0)$ be a pointed $q$-metric space.
For $b>0$, define
\begin{equation}\label{eq:flat-metric}
 \rho_b(x,y)^p
 =\min\set{d(x,y)^p,b^{p-q}d(x,y)^q}.
\end{equation}

\begin{lemma}\label{lem:rho-pmetric}
The function $\rho_b$ is a $p$-metric on $M$. More precisely,
\begin{equation}\label{eq:rho-formula}
 \rho_b(x,y)^p=
 \begin{cases}
  b^{p-q}d(x,y)^q,&d(x,y)\leq b,\\
  d(x,y)^p,&d(x,y)\geq b.
 \end{cases}
\end{equation}
Moreover,
\begin{equation}\label{eq:rho-comparison}
 \rho_b(x,y)\leq d(x,y),
 \qquad
 \rho_b(x,y)=d(x,y)\quad\text{if }d(x,y)\geq b.
\end{equation}
\end{lemma}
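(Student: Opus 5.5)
The plan is to establish the explicit formula \eqref{eq:rho-formula} first, then \eqref{eq:rho-comparison}, and only then the $p$-metric property, which is the only step with real content.

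\emph{Formula and comparison.} Set $t=d(x,y)$. Since $p-q<0$, we have $b^{p-q}t^q\leq t^p$ if and only if $(t/b)^{q-p}\leq 1$, that is, $t\leq b$. This gives \eqref{eq:rho-formula}. Because $\rho_b^p$ is a minimum containing the term $d^p$, we get $\rho_b\leq d$. Equality holds when $t\geq b$, by the second case of \eqref{eq:rho-formula}.

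\emph{Symmetry and the diagonal.} Both are clear. If $t>0$, then both terms in the minimum are positive, so $\rho_b(x,y)>0$.

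\emph{Triangle inequality.} We must show that $\rho_b^p$ satisfies the triangle inequality. I would write
\[
\rho_b(x,y)^p=\varphi\bigl(d(x,y)^q\bigr),\qquad \varphi(s)=\min\{s^{p/q},\,b^{p-q}s\},\quad s\geq 0.
\]
The function $\varphi$ is a minimum of two functions that are concave, nondecreasing and vanish at $0$. Hence $\varphi$ is itself concave, nondecreasing, and satisfies $\varphi(0)=0$, and therefore it is subadditive: $\varphi(s+u)\leq\varphi(s)+\varphi(u)$.

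Now use that $d^q$ is a metric. For any $x,y,z$,
\[
\rho_b(x,z)^p=\varphi\bigl(d(x,z)^q\bigr)\leq\varphi\bigl(d(x,y)^q+d(y,z)^q\bigr)\leq\rho_b(x,y)^p+\rho_b(y,z)^p .
\]
The first inequality is the monotonicity of $\varphi$ combined with the triangle inequality for $d^q$. The second is the subadditivity of $\varphi$.

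\emph{Main point.} The one step needing care is the concave-subadditive argument. Concavity of $s\mapsto s^{p/q}$ uses $p/q<1$, while the other term $b^{p-q}s$ is linear, hence concave. Subadditivity then follows from concavity and $\varphi(0)=0$ by the standard inequality $\varphi(\lambda s)\geq\lambda\varphi(s)$ for $\lambda\in[0,1]$. Everything else is routine.
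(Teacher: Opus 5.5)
Your proposal is correct and follows essentially the same route as the paper. Both write $\rho_b^p=f\circ d^q$ with $f(t)=\min\{t^{p/q},b^{p-q}t\}$, observe that $f$ is increasing and concave with $f(0)=0$, hence subadditive, and compose with the metric $d^q$. The only difference is that you obtain concavity directly as a minimum of two concave functions, whereas the paper checks the one-sided derivatives at the breakpoint $b^q$; your justification is slightly cleaner, but the argument is the same.
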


\begin{proof}
Put $f_b(t)=\min\set{t^{p/q},b^{p-q}t}$ for $t\geq0$. This function
is linear on $[0,b^q]$ and equals $t^{p/q}$ on $[b^q,\infty)$.
Its left derivative at $b^q$ is $b^{p-q}$, while its right derivative
is $(p/q)b^{p-q}\leq b^{p-q}$. Since the derivative of $t^{p/q}$
is positive and decreasing, $f_b$ is increasing and concave, with
$f_b(0)=0$. Every such function is subadditive: for $s,t>0$,
concavity gives
\[
 f_b(s)\geq\frac{s}{s+t}f_b(s+t),
 \qquad
 f_b(t)\geq\frac{t}{s+t}f_b(s+t),
\]
and adding yields $f_b(s+t)\leq f_b(s)+f_b(t)$. The same inequality
is immediate if $s=0$ or $t=0$. Since $d^q$ is a metric and
$f_b(t)>0$ for $t>0$, it follows that $f_b\circ d^q$ is a metric.
As $\rho_b^p=f_b\circ d^q$, the function $\rho_b$ is a $p$-metric.
The formulas follow by comparing $t^p$ with $b^{p-q}t^q$ on the
two sides of $b$.
\end{proof}

\begin{lemma}[Finite-support recovery modulo a finite set]
\label{lem:quotient-recovery}
Let $F\subseteq M$ be finite and pointed and let $a>0$. There are
$p$-Banach spaces $E_{F,a,b}$ and linear contractions
$T_{F,a,b}\colon\Fp(M)\to E_{F,a,b}$, $b>0$, annihilating
$\Fp(F;M)$, such that, with $\Phi_{F,a,b}=T_{F,a,b}\delta_M$,
\begin{equation}\label{eq:quotient-three-scale}
 \norm{\Phi_{F,a,b}(x)-\Phi_{F,a,b}(y)}^p
 \leq\min\{h^p,h^qb^{p-q},a^p\},
 \qquad h=d(x,y),
\end{equation}
for all $x,y\in M$. Moreover, for every
$\gamma\in\spn\delta_M([F]_{a/2^{1/p}})$ there exists
$b_0=b_0(F,a,\gamma)>0$ such that, for every
$0<b\leq b_0$,
\begin{equation}\label{eq:quotient-recovery}
 \norm{T_{F,a,b}\gamma}
 \geq\Gamma_p^{-1}\dist(\gamma,\Fp(F;M)).
\end{equation}
\end{lemma}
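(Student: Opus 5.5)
We construct the detectors as linearizations of a truncated, flattened quotient map. Let $C_a=\{x\in M:\dist_d(x,F)\geq a\}\cup\{0\}$. In a $q$-metric space this set is closed; if it meets trouble, replace it by its closure. Form the $p$-metric space $N_b=(M,\rho_b)$ of Lemma~\ref{lem:rho-pmetric}, collapse $F\cup C_a$ to a point, and pass to the quotient $(N_b/(F\cup C_a),(\rho_b)_{F\cup C_a})$ from \cite[Section~2.2]{AACDEmbeddability}. Set
\[
E_{F,a,b}=\Fp\bigl(N_b/(F\cup C_a)\bigr),\qquad T_{F,a,b}=\widehat{\delta\circ\pi\circ\Id},
\]
where $\Id\colon(M,d)\to(M,\rho_b)$ is $1$-Lipschitz by \eqref{eq:rho-comparison}, $\pi$ is the quotient map, and $\delta$ is the Dirac map of the quotient. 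The map $\pi$ is also contractive. Hence $T_{F,a,b}$ is a linear contraction, and it kills $\delta_M(F)$, so it annihilates $\Fp(F;M)$.

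The three-scale estimate \eqref{eq:quotient-three-scale} then splits into three bounds.
\begin{itemize}[leftmargin=*]
\item The bound $h^p$ comes from contractivity.
\item The bound $h^qb^{p-q}$ comes from $\rho_b^p\le b^{p-q}d^q$.
\item For the bound $a^p$, use the quotient formula \eqref{eq:quotient-metric}: the quotient distance is at most $(r(x)^p+r(y)^p)^{1/p}$, where $r=\dist_{\rho_b}(\cdot,F\cup C_a)$. Each $r(x)\le\dist_d(x,F\cup C_a)$. This last quantity should be at most $a/2^{1/p}$, which gives $r(x)^p+r(y)^p\le a^p$.
\end{itemize}
The step $\dist_d(x,F\cup C_a)\le a/2^{1/p}$ needs care. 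A point at $d$-distance at least $a$ from $F$ lies in $C_a$, so its distance is $0$. Otherwise the distance to $F$ is below $a$, which is not yet enough. I therefore expect to adjust the threshold: take $C=\{x:\dist_d(x,F)\geq a/2^{1/p}\}\cup\{0\}$, which yields $r(x)\le a/2^{1/p}$ for every $x$. This is also why the recovery statement is restricted to $\gamma$ supported in $[F]_{a/2^{1/p}}$.

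For the recovery inequality \eqref{eq:quotient-recovery}, fix $\gamma$ with finite pointed support $S\subseteq[F]_{a/2^{1/p}}$. We apply Lemma~\ref{lem:quotient-comparison} in the space $M$ with closed set $F$, target $N=N_b/(F\cup C)$, $f=\pi\circ\Id$ and $D=1$. This requires checking, for $x,y\in S$,
\[
d_F(x,y)^p=\min\{d(x,y)^p,\ r_F(x)^p+r_F(y)^p\}\le\min\{\rho_b(x,y)^p,\ r'(x)^p+r'(y)^p\},
\]
where $r'$ is the $\rho_b$-distance to $F\cup C$. Choose $b_0$ smaller than every positive value of $d(x,y)$ and $r_F(x)$ for $x,y\in S$. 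Then on $S$ we have $\rho_b=d$ at all relevant distances.

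Distances to $F$ pass through unchanged. However, $r'(x)$ could be realized by points $z\in C$, or by points close to $x$ where $\rho_b<d$. This is the main obstacle: a distance from $x$ realized at small scale is shrunk by $\rho_b$. The fix is to note that any $z$ with $d(x,z)<b$ has $\rho_b(x,z)^p=b^{p-q}d(x,z)^q\ge d(x,z)^p$, so $\rho_b$ never shrinks. In fact $\rho_b^p\geq d^p$ when $d\le b$ and $\rho_b=d$ otherwise, so $\rho_b\ge d$ in both cases. (This suggests \eqref{eq:rho-comparison} may be stated with the inequality reversed; I would recheck it and, if so, get contractivity of $\Id$ from the opposite inclusion.)

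It remains to show that distances to $C$ do not beat distances to $F$. For $x\in S$ we have $\dist_d(x,F)\le a/2^{1/p}$ strictly inside $C$'s threshold. I would use the $q$-triangle inequality to compare $\dist_d(x,C)$ with $\dist_d(x,F)$, shrinking $C$'s threshold if needed. This is the step I would check most carefully. Once the hypothesis holds with $D=1$, Lemma~\ref{lem:quotient-comparison} gives $\dist(\gamma,\Fp(F;M))\le\Gamma_p\norm{T_{F,a,b}\gamma}$ for all $b\le b_0$.
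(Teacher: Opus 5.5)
There is a genuine gap in the recovery step, and it is caused by how you enforce the cap $a^p$. Your three-scale estimate is fine, given the threshold $a/2^{1/p}$ and the inequality $\rho_b\leq d$. However, collapsing the far set $C=\{x:\dist_d(x,F)\geq a/2^{1/p}\}$ together with $F$ sends every point of $C$ to the base point. The set $C$ can meet, or come arbitrarily close to, the region $[F]_{a/2^{1/p}}$ where recovery is required.

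Concretely, take $F=\{0\}$ and a point $x$ with $d(x,0)=a/2^{1/p}$. Then $x\in C$, so $T_{F,a,b}\delta_M(x)=0$ for every $b$, while $\dist(\delta_M(x),\Fp(F;M))=\norm{\delta_M(x)}=a/2^{1/p}$. In $M=\mathbb R$ with $x=a/2^{1/p}-\varepsilon$ you get $\norm{T_{F,a,b}\delta_M(x)}\leq\dist_d(x,C)=\varepsilon$, so no constant at all survives. Changing the threshold does not help. In general, the uniform bound $r'(x)^p+r'(y)^p\leq a^p$ (needed for pairs far apart in $\rho_b$) forces the threshold to be at most $a/2^{1/p}$. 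The comparison $\dist_d(x,C)\geq\dist_d(x,F)$ that you hoped to extract from the $q$-triangle inequality is then simply false near the boundary of $[F]_{a/2^{1/p}}$.

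The missing idea is to collapse only $F$ and cap the metric by truncation instead. On $M/F$ put $\tau^p=\min\{\rho_{b,F}^p,a^p\}$, which is again a $p$-metric because $\min\{u+v,c\}\leq\min\{u,c\}+\min\{v,c\}$. Let $\varphi_b(t)=\min\{t^p,b^{p-q}t^q\}$. Since $\varphi_b$ is increasing and $F$ is finite, $\dist_{\rho_b}(x,F)^p=\varphi_b(r_F(x))$. Hence $\tau([x],[y])^p=\min\{\varphi_b(d(x,y)),\varphi_b(r_F(x))+\varphi_b(r_F(y)),a^p\}$, which gives \eqref{eq:quotient-three-scale} at once. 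On $S\subseteq[F]_{a/2^{1/p}}$ the cap is inactive, because $r_F(x)^p+r_F(y)^p\leq a^p$ already. For $b$ below the positive values of $d$ and $r_F$ on $S$, one has $\varphi_b(t)=t^p$ at every relevant argument. So $\tau(\pi x,\pi y)=d_F(\pi_Fx,\pi_Fy)$ on $S$, and Lemma~\ref{lem:quotient-comparison} applies with $D=1$.

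Separately, your claim that $\rho_b\geq d$ is wrong. For $d(x,y)<b$ one has $b^{p-q}d(x,y)^q=d(x,y)^p(d(x,y)/b)^{q-p}<d(x,y)^p$, since $q>p$. So \eqref{eq:rho-comparison} holds as stated, and you actually need this direction, both for contractivity and for $r'\leq\dist_d(\cdot,F\cup C)$. The small-scale shrinking you worried about is real but harmless. The formula $\dist_{\rho_b}(x,F)^p=\varphi_b(r_F(x))$ together with the choice of $b_0$ takes care of it.
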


\begin{proof}
For each $b>0$, let $\rho_{b,F}$ be the quotient $p$-metric on
$M/F$ associated with $\rho_b$ by \eqref{eq:quotient-metric}, namely
\[
 \rho_{b,F}([x],[y])^p
 =\min\bigl\{\rho_b(x,y)^p,\,
   \dist_{\rho_b}(x,F)^p+\dist_{\rho_b}(y,F)^p\bigr\}.
\]
On the same set $M/F$, with base point $[0]=F$, define
\[
 \tau([x],[y])^p
 =\min\{\rho_{b,F}([x],[y])^p,a^p\}.
\]
Since $\rho_{b,F}^p$ is a metric and
\[
 \min\{u+v,c\}\leq\min\{u,c\}+\min\{v,c\}
 \qquad(u,v\geq0,\ c>0),
\]
the function $\tau^p$ is also a metric. We denote the resulting
pointed $p$-metric space $(M/F,\tau,[0])$ by $N_{F,a,b}$.

Put $r_F(x)=\dist_d(x,F)$ and
$\varphi_b(t)=\min\{t^p,b^{p-q}t^q\}$. Since $F$ is finite and
$\varphi_b$ is increasing,
\[
 \dist_{\rho_b}(x,F)^p
 =\min_{z\in F}\varphi_b(d(x,z))
 =\varphi_b(r_F(x)).
\]
Substituting this identity and
$\rho_b(x,y)^p=\varphi_b(d(x,y))$ into the definitions of
$\rho_{b,F}$ and $\tau$ gives
\begin{equation}\label{eq:capped-quotient}
 \tau([x],[y])^p
 =\min\{\varphi_b(d(x,y)),\,
          \varphi_b(r_F(x))+\varphi_b(r_F(y)),\,a^p\}.
\end{equation}

Let $\pi\colon M\to N_{F,a,b}$ be the quotient map and put
\[
 E_{F,a,b}=\Fp(N_{F,a,b}),\qquad
 \Phi_{F,a,b}=\delta_{N_{F,a,b}}\pi.
\]
Since $\delta_{N_{F,a,b}}$ is isometric, the first and last terms
in \eqref{eq:capped-quotient} give
\eqref{eq:quotient-three-scale}. In particular, $\Phi_{F,a,b}$
is contractive, so its linearization
$T_{F,a,b}=\widehat\Phi_{F,a,b}$ is a contraction satisfying
$T_{F,a,b}\delta_M=\Phi_{F,a,b}$. Since $\Phi_{F,a,b}$ vanishes
on $F$, linearity and continuity imply that $T_{F,a,b}$
annihilates $\Fp(F;M)$.

To prove \eqref{eq:quotient-recovery}, let
$S\subseteq[F]_{a/2^{1/p}}$ be a finite pointed support of $\gamma$.
Let $b_0$ be the minimum of the positive numbers in
\[
 \{d(x,y):x,y\in S,\ x\ne y\}
 \ \cup\ \{r_F(x):x\in S\setminus F\}.
\]
If this set is empty, put $b_0=1$. Pick $0<b\leq b_0$. Since $S\subseteq[F]_{a/2^{1/p}}$, for every $x\in S$ we have
$r_F(x)\leq a/2^{1/p}$, and hence for $x,y\in S$,
\[
 r_F(x)^p+r_F(y)^p
 \leq \frac{a^p}{2}+\frac{a^p}{2}=a^p.
\]
Moreover, $\varphi_b(t)=t^p$ whenever $t=0$ or $t\geq b$.
Our choice of $b$ therefore ensures that
\[
 \varphi_b(d(x,y))=d(x,y)^p,
 \qquad
 \varphi_b(r_F(x))=r_F(x)^p
 \qquad(x,y\in S).
\]

Let $\pi_F\colon M\to(M/F,d_F)$ denote the quotient map associated
with the original distance $d$. By \eqref{eq:capped-quotient},
for $x,y\in S$ we obtain
\begin{align*}
 \tau(\pi(x),\pi(y))^p
 &=\min\{d(x,y)^p,r_F(x)^p+r_F(y)^p,a^p\}\\
 &=\min\{d(x,y)^p,r_F(x)^p+r_F(y)^p\}\\
 &=d_F(\pi_F(x),\pi_F(y))^p.
\end{align*}
The second equality holds because the second term in the minimum
is already at most $a^p$; the last equality is the quotient formula
\eqref{eq:quotient-metric}.

The preceding equality verifies
\eqref{eq:quotient-comparison-hypothesis} with $C=F$,
$(N,\sigma)=(N_{F,a,b},\tau)$, $f=\pi$, and $D=1$.
Since $T_f=T_{F,a,b}$, Lemma~\ref{lem:quotient-comparison} gives
\[
 \dist(\gamma,\Fp(F;M))
 \leq\Gamma_p\norm{T_{F,a,b}\gamma},
\]
which is \eqref{eq:quotient-recovery}.
\end{proof}

\subsection{Kalton's property and quantitative selection}

We use the following extension of
\cite[Definition~5.2]{AABCSchur}. That definition is stated for metric
bases and includes a relative version; here we take the ambient base
itself and allow a $p$-metric.

\begin{definition}\label{def:Kalton}
A bounded set $W\subseteq\Fp(M)$ has \emph{Kalton's property} if for
every $r,\eta>0$ there is a finite pointed set $F\subseteq M$ such that
\[
 W\subseteq\Fp([F]_r;M)+\eta B_{\Fp(M)}.
\]
\end{definition}

\begin{theorem}[Quantitative selection under Kalton's property]
\label{thm:Kalton-selection}
Let $0<p<q\leq1$, let $M$ be a pointed $q$-metric space, and let
$W\subseteq\Fp(M)$ be bounded, infinite, and $\delta$-separated.
Suppose that $W$ has Kalton's property. For every
\[
 0<c<\frac{\delta}{2^{1/p}\Gamma_p H_{p,q}^{1/p}},
\]
there is a sequence $(w_j)$ in $W$ with lower $\ell_p$-estimate $c$.
\end{theorem}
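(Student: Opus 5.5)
We construct a single bounded operator into an $\ell_p$-sum, built from the quotient detectors of Lemma~\ref{lem:quotient-recovery}, and then apply the Diagonal detector lemma (Lemma~\ref{lem:diagonal-detector}). The sequence $(w_j)$, finite sets $F_j$ and scales $a_j,b_j$ are chosen recursively. At step $j$ we want a contraction $T_j=T_{F_j,a_j,b_j}$ satisfying three requirements:
\begin{itemize}[label={}]
\item (i) $\norm{T_jw_j}$ is nearly $\delta/(2^{1/p}\Gamma_p)$;
\item (ii) the maps $\Phi_j=T_j\delta_M$ obey \eqref{eq:lacunary-three-scale} with $C=1$;
\item (iii) the scales obey \eqref{eq:lacunary-scales}.
\end{itemize}
Lemma~\ref{lem:lacunary-assembly} then makes $T=(T_j)_j$ a bounded operator $\Fp(M)\to(\bigoplus_j E_j)_p$ with $\norm{T}\le H_{p,q}^{1/p}$. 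This holds because $T$ is the linearization of the Lipschitz map $\Phi$, and it agrees with $(T_j)$ on Dirac vectors, hence everywhere. Lemma~\ref{lem:diagonal-detector} then gives a subsequence with lower estimate $c$ for any $c<\alpha/\norm T$, where $\alpha=\limsup_j\norm{T_jw_j}\ge\delta/(2^{1/p}\Gamma_p)$. Requirement (ii) is automatic from \eqref{eq:quotient-three-scale}.

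The key step is achieving (i). We use separation to make $w_j$ far from $\Fp(F_j;M)$, and Kalton's property to make $w_j$ almost supported in $[F_j]_{a_j/2^{1/p}}$.

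\emph{Choosing $F_j$ and $a_j$.} Given $b_{j-1}$, fix $a_j\le b_{j-1}/4$ and a small $\eta_j$. Kalton's property with $r=a_j/2^{1/p}$ and $\eta=\eta_j$ yields a finite pointed $F_j$ with $W\subseteq\Fp([F_j]_{a_j/2^{1/p}};M)+\eta_jB$.

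\emph{Choosing $w_j$.} The subspace $\Fp(F_j;M)$ is finite-dimensional, and $W$ is bounded. So $W$ meets a bounded portion of $\Fp(F_j;M)$ only through a totally bounded set: cover a bounded neighborhood by finitely many balls of radius slightly below $\delta/2^{1/p}$. We claim that all but finitely many $w\in W$ satisfy
\[
 \dist(w,\Fp(F_j;M))\ge\delta/2^{1/p}-\epsilon_j.
\]
Indeed, if $\dist(w,\Fp(F_j;M))<\delta/2^{1/p}-\epsilon$, then $w$ lies within that distance of a point $v_w$ of norm at most $2^{1/p}\sup_W\norm{\cdot}$. By $\delta$-separation and the $p$-triangle inequality, two such $w,w'$ have $v_w,v_{w'}$ separated by at least $(\delta^p-2(\delta/2^{1/p}-\epsilon)^p)^{1/p}>0$. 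A bounded set in a finite-dimensional space contains only finitely many points at that separation. Here the constant $2^{1/p}$ is exactly what the $p$-triangle inequality requires. Pick such a $w_j$, distinct from the earlier ones.

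\emph{Choosing $b_j$.} Write $w_j=\gamma_j+e_j$ with $\norm{e_j}\le\eta_j$ and $\gamma_j\in\Fp([F_j]_{a_j/2^{1/p}};M)$. By density we may take $\gamma_j$ in $\spn\delta_M([F_j]_{a_j/2^{1/p}})$, enlarging $\eta_j$ slightly. Lemma~\ref{lem:quotient-recovery} gives $b_0(F_j,a_j,\gamma_j)$. Take $b_j\le\min\{b_0,a_j/4\}$. Then
\[
 \norm{T_jw_j}^p\ge\norm{T_j\gamma_j}^p-\eta_j^p\ge\Gamma_p^{-p}\bigl(\dist(w_j,\Fp(F_j;M))^p-\eta_j^p\bigr)-\eta_j^p.
\]
Letting $\epsilon_j,\eta_j\to0$ gives $\alpha\ge\delta/(2^{1/p}\Gamma_p)$, as required for (i).

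The remaining points are technical. First, the $w_j$ are distinct; we can always choose them so, since cofinitely many elements qualify. Second, the sequence is bounded. Third, the scales interlace: $a_{j+1}\le b_j/4$ is imposed when choosing $a_{j+1}$, after $b_j$ is known. The main obstacle is the ordering: $b_j$ must depend on $\gamma_j$, and the $\gamma_j$ depend on $a_j$ through Kalton's property. The recursion order $a_j\to F_j\to w_j\to\gamma_j\to b_j\to a_{j+1}$ resolves this.

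Finally, Lemma~\ref{lem:diagonal-detector} applied to $(w_j)$ and $T$ produces the subsequence with lower $\ell_p$-estimate $c$. This works for every $c<\delta/(2^{1/p}\Gamma_pH_{p,q}^{1/p})$, because $c<\alpha/\norm T$ for all such $c$.
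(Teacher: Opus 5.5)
Your proposal is correct and follows essentially the same route as the paper: the same recursion $a_j\to F_j\to w_j\to\gamma_j\to b_j\to a_{j+1}$, the quotient detectors of Lemma~\ref{lem:quotient-recovery} assembled into one operator via Lemma~\ref{lem:lacunary-assembly}, the separation argument in the finite-dimensional space $\Fp(F_j;M)$, and a final appeal to the diagonal detector lemma. The only difference is cosmetic: you let the tolerances $\epsilon_j,\eta_j$ tend to zero to get $\alpha\geq\delta/(2^{1/p}\Gamma_p)$, whereas the paper fixes $\rho$ and $\varepsilon$ in terms of $c$ so that $\norm{T_jw_j}>cH_{p,q}^{1/p}$ for every $j$.
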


\begin{proof}
Put $L=H_{p,q}^{1/p}$ and choose $\rho$ so that
\begin{equation}\label{eq:Kalton-rho}
 c\Gamma_p L<\rho<\delta/2^{1/p}.
\end{equation}
We first observe that, for every finite-dimensional subspace
$Y\subseteq\Fp(M)$, infinitely many $w\in W$ satisfy
$\dist(w,Y)>\rho$. Otherwise, choose
$\rho<s<\delta/2^{1/p}$. For all but finitely many $w\in W$ there
would be $y_w\in Y$ with $\norm{w-y_w}<s$. The family $(y_w)$ is
bounded, since $W$ is bounded and
$\norm{y_w}^p\leq\norm w^p+s^p$. It is therefore totally bounded.
There would be distinct $u,v\in W$ with
\[
 \norm{y_u-y_v}^p<\delta^p-2s^p.
\]
The $p$-triangle inequality would then give $\norm{u-v}^p<\delta^p$,
contradicting separation.

Choose $0<\varepsilon<\rho$ such that
\begin{equation}\label{eq:Kalton-epsilon}
 c^pL^p<\Gamma_p^{-p}(\rho^p-\varepsilon^p)-\varepsilon^p.
\end{equation}
This is possible by \eqref{eq:Kalton-rho}. We recursively select
scales $a_j,b_j>0$, finite pointed sets $F_j$, distinct $w_j\in W$,
and finitely supported vectors $\gamma_j$.

Choose $a_1>0$. At step $j>1$, choose $a_j\leq b_{j-1}/4$.
Kalton's property provides a finite pointed set $F_j$ such that every
$w\in W$ can be approximated within $\varepsilon$ by a finitely
supported vector supported in $[F_j]_{a_j/2^{1/p}}$. More explicitly,
choose $\eta>0$ with $2\eta^p<\varepsilon^p$, apply
Definition~\ref{def:Kalton} with this $\eta$ and radius $a_j/2^{1/p}$,
and then approximate the resulting vector in the closed span by a
finite linear combination of its Dirac vectors with error less than
$\eta$.

The finite-dimensional observation, applied to $Y_j=\Fp(F_j;M)$,
allows us to choose $w_j$ distinct from the previous choices with
$\dist(w_j,Y_j)>\rho$. Choose the prescribed approximation
$\gamma_j$ with $\norm{w_j-\gamma_j}<\varepsilon$ and finite pointed
support in $[F_j]_{a_j/2^{1/p}}$. In the quotient by $Y_j$,
\[
 \dist(\gamma_j,Y_j)^p
 \geq\dist(w_j,Y_j)^p-\norm{w_j-\gamma_j}^p
 >\rho^p-\varepsilon^p.
\]
By Lemma~\ref{lem:quotient-recovery}, choose $0<b_j\leq a_j/4$
such that, with $T_j=T_{F_j,a_j,b_j}$,
$\norm{T_j\gamma_j}\geq\Gamma_p^{-1}\dist(\gamma_j,Y_j)$.
Since $\norm{T_j}\leq1$, it follows that
\begin{equation}\label{eq:Kalton-diagonal}
 \norm{T_jw_j}^p
 \geq\norm{T_j\gamma_j}^p-\norm{w_j-\gamma_j}^p
 >\Gamma_p^{-p}(\rho^p-\varepsilon^p)-\varepsilon^p
 =:\alpha^p>c^pL^p.
\end{equation}

The scales satisfy \eqref{eq:lacunary-scales}. By
\eqref{eq:quotient-three-scale} and Lemma~\ref{lem:lacunary-assembly},
the map $(\Phi_{F_j,a_j,b_j})_j$ linearizes to an operator
\[
 T\colon\Fp(M)\longrightarrow
 \left(\bigoplus_{j=1}^{\infty}E_{F_j,a_j,b_j}\right)_p,
 \qquad \norm T\leq L,
\]
whose $j$th coordinate is $T_j$. Since $c<\alpha/L$,
Lemma~\ref{lem:diagonal-detector}, applied directly to $(w_j)$,
gives a subsequence with the required lower $\ell_p$-estimate $c$.
\end{proof}

\begin{theorem}[Compact bases]\label{thm:compact-strong}
Let $0<p<q\leq1$ and let $K$ be a compact pointed $q$-metric
space. Then $\Fp(K)$ has the $K_{p,q}$-strong Schur $p$-property, where
\begin{equation}\label{eq:compact-constant}
 K_{p,q}=\Gamma_p H_{p,q}^{1/p}
 =\Gamma_p\left(\frac1{1-16^{p-q}}+1+
                         \frac1{1-16^{-p}}\right)^{1/p}.
\end{equation}
In particular, every compact metric space has the same bound $K_{p,1}$.
\end{theorem}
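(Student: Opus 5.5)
The plan is to reduce Theorem~\ref{thm:compact-strong} to Theorem~\ref{thm:Kalton-selection}. The key step is to check that every bounded subset of $\Fp(K)$ has Kalton's property when $K$ is compact. We then match the constants with the definition of the $K_{p,q}$-strong Schur $p$-property.

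Fix $\delta>0$, $K_0>K_{p,q}$, and an infinite $\delta$-separated set $A\subseteq S_{\Fp(K)}$. Then $A$ is bounded, infinite and $\delta$-separated.

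\emph{Kalton's property.} Given $r,\eta>0$, compactness of $K$ (in the topology of $d^q$) provides a finite $r$-net $F$. We may add $0$ to $F$, so $F$ is pointed. Then $[F]_r=K$, and hence
\[
 \Fp([F]_r;K)=\Fp(K;K)=\Fp(K).
\]
The inclusion $A\subseteq\Fp([F]_r;K)+\eta B_{\Fp(K)}$ is therefore trivial. So Kalton's property holds for every bounded set, and in particular for $A$. This step is nearly immediate; the only point to note is that the net is taken with respect to $d$, and $\dist_d(x,F)\le r$ holds for every $x\in K$.

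\emph{Matching the constant.} We need
\[
 \frac{\delta}{K_0 2^{1/p}}<\frac{\delta}{2^{1/p}\Gamma_pH_{p,q}^{1/p}},
\]
which is equivalent to $K_0>\Gamma_pH_{p,q}^{1/p}=K_{p,q}$. This holds by assumption. So we take $c=\delta/(K_02^{1/p})$ in Theorem~\ref{thm:Kalton-selection}. It yields a sequence $(w_j)$ in $A$ with lower $\ell_p$-estimate $c$. The sequence is automatically made of distinct elements, by construction in that proof. With $B=\{w_j\}$ this gives \eqref{eq:strong-Schur-lower}.

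The upper estimate is automatic on the sphere by the $p$-triangle inequality, as remarked after the definition. Finally, a compact metric space is a compact $1$-metric space, so the case $q=1$ gives the bound $K_{p,1}$ for every compact metric space.

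The main potential obstacle is whether Theorem~\ref{thm:Kalton-selection} is being invoked in exactly the form the definition demands. Two points must hold: the lower estimate must be indexed over the infinite subset $B$ rather than over a sequence with repetitions, and the strict inequality $K_0>K$ must be what allows $c$ to be chosen strictly below the threshold. Both are satisfied, so the argument should go through.
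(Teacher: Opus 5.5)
Your proposal is correct and matches the paper's proof: a finite pointed $r$-net $F$ gives $[F]_r=K$, so every bounded set has Kalton's property. Theorem~\ref{thm:Kalton-selection} is then applied with $c=\delta/(K_0 2^{1/p})$, which is admissible exactly because $K_0>K_{p,q}$, and the upper estimate on the sphere comes from the $p$-triangle inequality.
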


\begin{proof}
Every bounded subset of $\Fp(K)$ has Kalton's property. Indeed, for
each $r>0$, a finite pointed $r$-net $F\subseteq K$ satisfies
$[F]_r=K$, so the inclusion in Definition~\ref{def:Kalton} holds for
every $\eta>0$.

Let $\delta>0$, let $K_0>K_{p,q}$, and let $W\subseteq S_{\Fp(K)}$
be infinite and $\delta$-separated. Since
\[
 \frac{\delta}{K_0 2^{1/p}}
 <\frac{\delta}{2^{1/p}\Gamma_p H_{p,q}^{1/p}},
\]
Theorem~\ref{thm:Kalton-selection} supplies a sequence in $W$ with
lower estimate $\delta/(K_0 2^{1/p})$. Its upper estimate is one by
the $p$-triangle inequality. This is the required strong Schur
$p$-property.
\end{proof}

\begin{corollary}[Finite-dimensional subsets]
\label{thm:finite-dimensional-subsets}
Let $X$ be a finite-dimensional normed space and let $M\subseteq X$.
Then $\Fp(M)$ has the strong Schur $p$-property.
\end{corollary}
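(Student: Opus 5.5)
Since $M$ sits inside a finite-dimensional normed space $X$, it is a metric space, so $q=1$ and $p<1$ are admissible. The plan is to reduce to the compact case of Theorem~\ref{thm:compact-strong}. The obstruction is that $M$ need not be bounded, and the strong Schur $p$-property is not automatically inherited from compact pieces. We therefore exploit the finite-dimensional structure: we embed $\Fp(M)$ isomorphically into a space built from compact pieces, with constants that do not depend on the pieces.

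First, by \eqref{eq:completion-isometry} we may replace $M$ by its closure in $X$, and we may assume $0\in M$ (translate, then use that the free space is unchanged up to isometry under change of base point, or simply add $0$ and use Theorem~\ref{thm:canonical-input} together with Lemma~\ref{lem:Schur-stability}). By Theorem~\ref{thm:canonical-input}, $\Fp(M)$ is $\Gamma_p$-isomorphic to the subspace $\Fp(M;X)\subseteq\Fp(X)$. The strong Schur $p$-property passes to subspaces and is invariant under isomorphisms, so it suffices to treat $M=X\cong\mathbb R^d$.

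For $\Fp(\mathbb R^d)$ I would use a Kalton-type decomposition into dyadic annuli. With a Lipschitz partition of unity subordinate to the annuli $\{2^{k-1}\leq\norm x\leq 2^{k+1}\}$, one shows that $\Fp(\mathbb R^d)$ is isomorphic to a complemented subspace of $(\bigoplus_{k\in\mathbb Z}\Fp(K_k))_p$. Here each $K_k$ is a closed ball of radius $2^{k+1}$, which is compact. The $p$-version of Kalton's lemma is where the $p$-triangle inequality helps: it makes the $\ell_p$-sum estimate easier than in the Banach case. The balls $K_k$ are all dilates of one ball $B$, and dilation is an isometry of free spaces up to the scalar factor. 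Hence every summand is isometric to $\Fp(B)$, which has the $K_{p,1}$-strong Schur $p$-property by Theorem~\ref{thm:compact-strong}.

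The remaining step, and the main obstacle, is to show that an $\ell_p$-sum of spaces with the uniform $K$-strong Schur $p$-property again has the strong Schur $p$-property. Take a $\delta$-separated sequence in the unit sphere of the sum. First pass to a subsequence converging coordinatewise in norm on finitely many coordinates. This cannot be done naively, since the coordinate spaces are not compact. The plan is a dichotomy instead. Either some fixed coordinate retains a separated infinite subfamily, in which case we apply the summand's property. Or, after a Ramsey argument as in Lemma~\ref{lem:diagonal-detector}, the mass moves off to infinity in the coordinate index, and disjointness of supports in an $\ell_p$-sum gives the $\ell_p$ estimate directly.

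A cleaner alternative, which I would try first, is to verify Kalton's property and apply Theorem~\ref{thm:Kalton-selection} directly. Use the annular decomposition to write each $w$ as a bounded piece plus a small tail. Nets in bounded regions of $\mathbb R^d$ are finite, so this gives the inclusion of Definition~\ref{def:Kalton}, provided the tail at infinity and near $0$ can be uniformly controlled on the separated set after passing to a subsequence. If that uniform control fails, the sequence is asymptotically disjoint in the annular $\ell_p$-decomposition, and the $\ell_p$ lower estimate follows from the complemented embedding.
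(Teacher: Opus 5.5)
Your reduction to $M=X$ via Theorem~\ref{thm:canonical-input} is the same as the paper's. Your Kalton-type decomposition is also sound for $p<1$. Multiplication by Lipschitz bumps $\phi_k$ supported in dyadic annuli is bounded into $(\bigoplus_{k\in\mathbb Z}\Fp(K_k))_p$, because for each pair $x,y$ only boundedly many indices are active and each active term is $O(\norm{x-y})$. You only need the resulting embedding into $\ell_p(\Fp(B))$, not its complementation.

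\textbf{The gap.} The missing step is the one you flag yourself: that an $\ell_p$-sum of spaces with a uniform $K$-strong Schur $p$-property again has the strong Schur $p$-property. Your dichotomy cannot give this. The strong property demands the lower estimate $\delta/(K_02^{1/p})$ with $K_0$ independent of the sequence, and a single coordinate may carry only a tiny share of the separation. Concretely, take sequences $(y^{(1)}_n)_n,\dots,(y^{(N)}_n)_n$, each $1$-separated in $S_Y$, and put $z_n=N^{-1/p}(y^{(1)}_n,\dots,y^{(N)}_n,0,0,\dots)$. Then $(z_n)$ is $1$-separated in the unit sphere of the sum and no mass escapes, so only your first branch applies. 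But every coordinate is merely $N^{-1/p}$-separated, so the summand's property in one coordinate gives a lower estimate of order $N^{-1/p}$, which degenerates as $N\to\infty$. The contributions of all coordinates, and of any escaping mass, must be added rather than chosen between.

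Your ``cleaner alternative'' has the same defect. Failure of uniform localization at infinity does not make the sequence asymptotically disjoint; it only says that part of the mass escapes. Meanwhile Theorem~\ref{thm:Kalton-selection} needs Kalton's property for the whole set.

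\textbf{How to close it.} The stability statement can be proved as follows. By Ramsey's theorem and a diagonal argument, pass to a subsequence along which the limits over pairs $n<m$,
$\delta_k=\lim\norm{P_k(z_n-z_m)}$ and $s_N=\lim\norm{P_{>N}(z_n-z_m)}$, exist. With $\gamma=\lim_N s_N$ one gets $\delta^p\le\sum_k\delta_k^p+\gamma^p$.
\begin{enumerate}[label=\textup{(\arabic*)}]
\item Choose $N$ with $\sum_{k>N}\delta_k^p$ small.
\item Apply the summands' property successively in the coordinates $k\le N$, using its version for separated sequences in the ball, which follows by normalizing.
\item Run a gliding hump on $(P_{>N}z_n)$ and apply Lemma~\ref{lem:diagonal-detector} to the resulting block operator, obtaining a lower estimate close to $\gamma/2^{1/p}$.
\end{enumerate}
Adding these estimates in the $\ell_p$-sum gives roughly $\delta/(K_02^{1/p})$.

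The paper avoids all of this. It quotes $\Fp(X)\simeq\Fp(B_X)$ from \cite[Theorem~4.15]{AACDInfiniteSums}, so Theorem~\ref{thm:compact-strong} for the compact ball $B_X$ plus isomorphic invariance gives the property for $\Fp(X)$ directly. No stability result for $\ell_p$-sums is needed.
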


\begin{proof}
The unit ball $B_X$ is compact, so $\Fp(B_X)$ has the strong Schur
$p$-property by Theorem~\ref{thm:compact-strong} with $q=1$.
By \cite[Theorem~4.15]{AACDInfiniteSums}, applied with its cone equal
to the Banach space $X$ and its snowflake exponent equal to $1$,
\[
 \Fp(X)\simeq\Fp(B_X).
\]
Thus $\Fp(X)$ has the strong Schur $p$-property by isomorphic
invariance. After translating a chosen base point of $M$ to $0$,
Theorem~\ref{thm:canonical-input} identifies $\Fp(M)$ with a closed
subspace of $\Fp(X)$. The property passes to this subspace and then
to $\Fp(M)$ by isomorphic invariance.
\end{proof}
\section{Bounded bases and the Kalton--\texorpdfstring{$\ell_p$}{ell-p} alternative}
\label{sec:bounded}

We first isolate a localization principle for arbitrary $p$-metric
bases. It will be used both when Kalton's property fails on a
bounded base and in the localization at infinity in
Section~\ref{sec:general}. The geometric hypothesis compares
distances to two closed sets on the finite supports; the complements
of one family must be pairwise disjoint.

\begin{lemma}[Localization by disjoint quotients]
\label{lem:disjoint-localization}
Let $M$ be a pointed $p$-metric space. For each $j\in\NN$, let
$A_j,C_j\subseteq M$ be closed and pointed and let $S_j\subseteq M$
be finite and pointed. Suppose that the sets $M\setminus A_j$ are
pairwise disjoint and, for some $D\geq1$,
\begin{equation}\label{eq:disjoint-depth}
 \dist_d(x,C_j)\leq D\dist_d(x,A_j)
 \qquad(j\in\NN,\ x\in S_j).
\end{equation}
If $(w_j)$ is a bounded sequence in $\Fp(M)$ with no subsequence
equivalent to the canonical basis of $\ell_p$, then
\[
 \dist(w_j,\Fp(S_j;M))\longrightarrow0
 \quad\Longrightarrow\quad
 \dist(w_j,\Fp(C_j;M))\longrightarrow0.
\]
\end{lemma}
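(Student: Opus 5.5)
I would argue by contraposition and reduce everything to the diagonal detector lemma (Lemma~\ref{lem:diagonal-detector}), using the quotient maps $\pi_{A_j}$ as the coordinates of a single operator into an $\ell_p$-sum. Suppose $\dist(w_j,\Fp(S_j;M))\to0$ but $\dist(w_j,\Fp(C_j;M))\not\to0$. Then there are $\alpha>0$ and an infinite set $J\subseteq\NN$ with $\dist(w_j,\Fp(C_j;M))>\alpha$ for $j\in J$. I will show that $(w_j)_{j\in J}$ has a subsequence equivalent to the canonical basis of $\ell_p$, which is the required contradiction.

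\emph{Step 1: the operator.} Put $E_j=\Fp(M/A_j,d_{A_j})$ and $\Phi_j=\delta_{M/A_j}\pi_{A_j}\colon M\to E_j$. Each $\Phi_j$ is pointed and contractive, and it vanishes on $A_j$. Fix $x,y\in M$. If $\Phi_j(x)\neq\Phi_j(y)$, then $x\notin A_j$ or $y\notin A_j$. Since the sets $M\setminus A_j$ are pairwise disjoint, this can happen for at most two indices $j$. Hence
\[
 \sum_j\norm{\Phi_j(x)-\Phi_j(y)}^p\leq 2\,d(x,y)^p .
\]
Taking $y=0$ shows that $\Phi=(\Phi_j)_j$ maps $M$ into $E=(\bigoplus_jE_j)_p$, and the same bound gives $\Lip(\Phi)\leq2^{1/p}$. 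Its linearization $T\colon\Fp(M)\to E$ is therefore bounded, with $\norm T\leq2^{1/p}$. Its $j$th coordinate is $T_j=\widehat{\delta_{M/A_j}\pi_{A_j}}$. By Lemma~\ref{lem:free-quotient},
\[
 \norm{T_j\gamma}=\dist(\gamma,\Fp(A_j;M))\qquad(\gamma\in\Fp(M)).
\]

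\emph{Step 2: the diagonal lower bound.} Choose $\gamma_j\in\spn\delta_M(S_j)$ with $\varepsilon_j:=\norm{w_j-\gamma_j}\to0$; we may take $\varepsilon_j<\alpha$ for the relevant $j$. The $p$-triangle inequality in the quotient by $\Fp(C_j;M)$ gives
\[
 \dist(\gamma_j,\Fp(C_j;M))^p>\alpha^p-\varepsilon_j^p
 \qquad(j\in J).
\]
By the last assertion of Lemma~\ref{lem:quotient-comparison}, applied with $A=A_j$, $C=C_j$, $S=S_j$ and hypothesis \eqref{eq:disjoint-depth},
\[
 \dist(\gamma_j,\Fp(C_j;M))\leq D\Gamma_p\,\dist(\gamma_j,\Fp(A_j;M))=D\Gamma_p\norm{T_j\gamma_j}.
\]
Since $\norm{T_j}\leq1$, we get
\[
 \norm{T_jw_j}^p\geq\norm{T_j\gamma_j}^p-\varepsilon_j^p
 \geq(D\Gamma_p)^{-p}(\alpha^p-\varepsilon_j^p)-\varepsilon_j^p .
\]
Therefore
\[
 \limsup_{j\in J}\norm{\pi_jTw_j}\geq\alpha/(D\Gamma_p)>0 .
\]

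\emph{Step 3: conclusion.} Apply Lemma~\ref{lem:diagonal-detector} to $T$ and the bounded sequence $(w_j)$. To keep the coordinate index matched with the sequence index, set $x_n=w_n$ for $n\in J$ and $x_n=0$ otherwise; then $\limsup_n\norm{\pi_nTx_n}>0$. The lemma yields a subsequence $(x_{n_j})$ satisfying \eqref{eq:triangular-lower}. Because the detector picks indices from the set where $\norm{\pi_nTx_n}>\alpha_0>0$, this subsequence lies inside $J$. It is thus a subsequence of $(w_j)$ equivalent to the canonical basis of $\ell_p$, contradicting the hypothesis.

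The only delicate point is Step~1, the boundedness of $T$. Disjointness of the complements $M\setminus A_j$ is exactly what caps the number of nonzero coordinates at two, and so gives a uniform Lipschitz bound without any lacunary-scale argument. The remaining steps are direct applications of the comparison lemma and the diagonal detector lemma.
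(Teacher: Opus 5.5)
Your proof is correct and is essentially the paper's argument. It uses the same operator $T$ assembled from the quotient maps $\pi_{A_j}$ with $\norm{T}\leq 2^{1/p}$, the last assertion of Lemma~\ref{lem:quotient-comparison}, and the diagonal detector lemma, except that the paper argues directly (the detector forces $\norm{T_jw_j}\to0$, which the comparison lemma then transfers to $\dist(w_j,\Fp(C_j;M))$) rather than by contradiction. The zero-padding in Step~3 is harmless but unnecessary, since $\limsup_n\norm{\pi_nTw_n}\geq\limsup_{n\in J}\norm{\pi_nTw_n}>0$ already lets you apply the detector to $(w_n)$ itself.
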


\begin{proof}
Put $E_j=\Fp(M/A_j,d_{A_j})$ and
$\Phi_j=\delta_{M/A_j}\pi_{A_j}$. Each $\Phi_j$ is pointed and
contractive. Since the complements of the $A_j$ are pairwise
disjoint, at each $x\in M$ at most one $\Phi_j(x)$ is nonzero.
Thus $\Phi(x)=(\Phi_j(x))_j$ belongs to $(\bigoplus_jE_j)_p$ and
\[
 \norm{\Phi(x)-\Phi(y)}^p\leq2d(x,y)^p\qquad(x,y\in M),
\]
because the difference has at most two nonzero coordinates.
Its linearization $T\colon\Fp(M)\to(\bigoplus_jE_j)_p$ has
$\norm T\leq2^{1/p}$ and coordinate operators
$T_j=\widehat\Phi_j$, each of norm at most one.
Lemma~\ref{lem:diagonal-detector} implies
$\norm{T_jw_j}\to0$, since otherwise $(w_j)$ would have an
$\ell_p$-subsequence.

Choose $\gamma_j\in\Fp(S_j;M)=\spn\delta_M(S_j)$ with
$\norm{w_j-\gamma_j}\to0$. Then
\[
 \norm{T_j\gamma_j}^p
 \leq\norm{T_jw_j}^p+\norm{w_j-\gamma_j}^p\longrightarrow0.
\]
For each $j$, condition \eqref{eq:disjoint-depth} is precisely
\eqref{eq:quotient-depth-comparison} with $A=A_j$, $C=C_j$,
and $S=S_j$. Thus the last assertion of
Lemma~\ref{lem:quotient-comparison} gives
\begin{align*}
 \dist(\gamma_j,\Fp(C_j;M))
 &\leq D\Gamma_p\dist(\gamma_j,\Fp(A_j;M))\\
 &=D\Gamma_p\norm{T_j\gamma_j}\longrightarrow0,
\end{align*}
where the equality follows from Lemma~\ref{lem:free-quotient}.
Finally, the quotient $p$-triangle inequality gives
\[
 \dist(w_j,\Fp(C_j;M))^p
 \leq\norm{w_j-\gamma_j}^p
     +\dist(\gamma_j,\Fp(C_j;M))^p\longrightarrow0.
 \qedhere
\]
\end{proof}

We now apply this principle when Kalton's property fails on a
bounded $p$-metric base.

\begin{lemma}\label{lem:non-Kalton}
Let $M$ be a bounded pointed $p$-metric space. If a bounded set
$W\subseteq\Fp(M)$ fails Kalton's property, then $W$ contains a
sequence equivalent to the canonical basis of $\ell_p$.
\end{lemma}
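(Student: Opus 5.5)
The plan is to argue by contradiction. We assume $W$ fails Kalton's property but contains no sequence equivalent to the canonical basis of $\ell_p$. Then no sequence in $W$ has an $\ell_p$-subsequence, and Lemma~\ref{lem:disjoint-localization} applies to every sequence extracted from $W$. Failure of Definition~\ref{def:Kalton} gives $r,\eta>0$ such that for every finite pointed $F\subseteq M$ some $w\in W$ satisfies $\dist(w,\Fp([F]_r;M))>\eta$. Let $R<\infty$ bound the diameter of $M$; boundedness of the base is used only to keep the depth constant $D$ finite.

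\textbf{Step 1: the recursive construction.} Fix errors $\varepsilon_j\to0$, put $F_1=\{0\}$, and choose $w_1$ with $\dist(w_1,\Fp([F_1]_r;M))>\eta$. At step $j$:
\begin{enumerate}[label=(\roman*)]
\item choose $w_j\in W$ with $\dist(w_j,\Fp([F_j]_r;M))>\eta$;
\item choose a finitely supported $\gamma_j$ with $\norm{w_j-\gamma_j}<\varepsilon_j$ and a finite pointed support $S_j\supseteq F_j$;
\item set $F_{j+1}=F_j\cup S_j$.
\end{enumerate}
The goal is to attach closed pointed sets $A_j$ and $C_j$ satisfying three requirements:
\begin{enumerate}[label=(\alph*)]
\item the complements $M\setminus A_j$ are pairwise disjoint;
\item \eqref{eq:disjoint-depth} holds on $S_j$ with a uniform $D$;
\item $C_j\subseteq[F_j]_r$.
\end{enumerate}
Given these, Lemma~\ref{lem:disjoint-localization} yields $\dist(w_j,\Fp(C_j;M))\to0$, while (c) gives $\dist(w_j,\Fp(C_j;M))\geq\dist(w_j,\Fp([F_j]_r;M))>\eta$. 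This is the desired contradiction.

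\textbf{Step 2: choosing $A_j$ and $C_j$.} The relevant part of $S_j$ is $S_j\setminus[F_j]_r$, the points at distance more than $r$ from everything already used. We want $M\setminus A_j$ to be a neighbourhood of these new points that is small at scale $r$, for instance a union of balls of radius $r/4$ around them; by construction such points are far from all earlier supports. We then take $C_j=[F_j]_r\cup A_j$-type sets, trimmed so that (c) holds. On $S_j\cap[F_j]_r$, the point lies in $C_j$, so \eqref{eq:disjoint-depth} is trivial. On a new point $x$, $\dist_d(x,A_j)\gtrsim r$ while $\dist_d(x,C_j)\leq R$. Hence $D\approx R/r$ (up to a $p$-metric constant), which is finite precisely because $M$ is bounded.

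\textbf{Main obstacle.} The hardest point is pairwise disjointness of the $M\setminus A_j$. A ball around a new point of $S_k$ may meet one around a new point of $S_j$ for $j<k$, since newer points are only guaranteed to be far from $F_k\supseteq S_j$, not from the neighbourhoods. To handle this I would shrink the radii along the construction: use radius $r/4$ balls in the $p$-metric triangle inequality, so that a point at distance more than $r$ from $S_j$ has its ball disjoint from the ball of radius $r/4$ around $S_j$. If the $p$-triangle constants spoil this, I would pass to a subsequence via Ramsey's theorem, colouring pairs $j<k$ by whether the neighbourhoods meet; an infinite homogeneous ``meeting'' set should contradict the $\eta$-separation from $[F_k]_r$. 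A fallback is to enlarge $C_j$ by the earlier neighbourhoods while keeping it inside $[F_j]_r$.
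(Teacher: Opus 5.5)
\textbf{Gap: the disjointness step.} Your skeleton is the paper's. You use the same contradiction hypothesis and the same recursion; your $F_j$ and $S_j=F_{j+1}$ play the roles of the paper's $F_{j-1}$ and $F_j$. You also take $C_j=[F_j]_r$ and apply Lemma~\ref{lem:disjoint-localization} with $D\approx R/r$. Note that requirement (b) on old points together with (c) forces $C_j=[F_j]_r$ exactly, so ``$[F_j]_r\cup A_j$-type sets, trimmed'' should just read $C_j=[F_j]_r$.

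The gap is in the only nontrivial step, the pairwise disjointness of the sets $M\setminus A_j$, which you leave unresolved. Take open balls of radius $r/4$ around the new points. Let $x$ be new in $S_j$, $y$ new in $S_k$ with $j<k$, and let $z\in B(x,r/4)\cap B(y,r/4)$. The $p$-triangle inequality only gives $d(x,y)^p<2(r/4)^p$. This contradicts $d(x,y)>r$ only when $2^{1/p}\leq 4$, that is, $p\geq 1/2$, so for $p<1/2$ the step fails. None of your fallbacks repairs it:
\begin{itemize}
\item Radii shrinking to $0$ make the depth constant $R/\rho_j$ unbounded, whereas \eqref{eq:disjoint-depth} needs a single $D$.
\item A Ramsey-homogeneous ``meeting'' set concerns only the geometry of the supports. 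It carries no information about $\dist(w_k,\Fp([F_k]_r;M))$, so it cannot contradict the $\eta$-separation. New points from different stages may well be $r$-separated yet pairwise within $2^{1/p}r/4$ of each other.
\item Enlarging $C_j$ does not affect the sets $M\setminus A_j$ at all.
\end{itemize}

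\textbf{The fix.} Tie the radius to the $p$-triangle inequality: take $s=r/2^{1/p}$, so that $2s^p=r^p$. Then:
\begin{itemize}
\item open $s$-balls around points more than $r$ apart are disjoint;
\item $A_j$ is closed and contains $0$, because $d(x,0)>r$ for every new $x$;
\item $\dist_d(x,A_j)\geq s$ for every new $x\in S_j$;
\item \eqref{eq:disjoint-depth} holds with $D=\max\{1,R/s\}$.
\end{itemize}
With this single change your argument is complete and essentially coincides with the paper's.

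\textbf{How the paper does it.} The paper uses the same $s$ but spends the triangle inequality elsewhere. It sets $A_j=[F_{j-1}]_s\cup\{x:\dist_d(x,F_j)\geq s\}$. Disjointness then follows from the monotonicity of $(F_j)$ alone: a point outside $A_i$ lies within $s$ of $F_i\subseteq F_{j-1}$, hence belongs to $A_j$ for every $j>i$. The identity $2s^p=r^p$ is used only for the depth estimate $\dist_d(x,A_j)\geq s$ when $x\in F_j\setminus[F_{j-1}]_r$.
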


\begin{proof}
Suppose, towards a contradiction, that $W$ contains no sequence
equivalent to the canonical basis of $\ell_p$. Put
$R=\operatorname{diam}M$. Failure of Kalton's property provides
$r,\eta>0$ such that, for every finite pointed $F\subseteq M$,
some $w\in W$ satisfies
\begin{equation}\label{eq:non-Kalton-failure}
 \dist(w,\Fp([F]_r;M))>\eta.
\end{equation}
Necessarily $R>0$, and we may take $r<R$, since
$[\{0\}]_r=M$ when $r\geq R$. Set $s=r/2^{1/p}$ and $D=R/s$.

Starting with $F_0=\{0\}$, recursively set $C_j=[F_{j-1}]_r$,
choose $w_j\in W$ with $\dist(w_j,\Fp(C_j;M))>\eta$, and choose
a finitely supported $\gamma_j$ with
$\norm{w_j-\gamma_j}<1/j$. Let $F_j$ be the union of $F_{j-1}$
and a finite pointed support of $\gamma_j$, and put
\begin{equation}\label{eq:non-Kalton-A}
 A_j=[F_{j-1}]_s
       \ \cup\ \{x\in M:\dist_d(x,F_j)\geq s\}.
\end{equation}
The sets $A_j$ and $C_j$ are closed and pointed. The sets $M\setminus A_j$ are pairwise disjoint. Indeed, fix
distinct indices $i,j$. Interchanging them if necessary, assume
$i<j$. If $x\in M\setminus A_i$, the definition of $A_i$ gives
$\dist_d(x,F_i)<s$. Since the sets $F_k$ are increasing and
$i\leq j-1$, we have
\[
 \dist_d(x,F_{j-1})\leq\dist_d(x,F_i)<s.
\]
Thus $x\in[F_{j-1}]_s\subseteq A_j$, so
$(M\setminus A_i)\cap(M\setminus A_j)=\varnothing$.

We verify \eqref{eq:disjoint-depth} with $S_j=F_j$ and $D=R/s$.
If $x\in F_j\setminus C_j$, then $\dist_d(x,A_j)\geq s$.
To see this, let $z\in[F_{j-1}]_s$. Since $F_{j-1}$ is finite,
choose $a\in F_{j-1}$ with $d(z,a)\leq s$. Then
\[
 d(x,z)^p\geq d(x,a)^p-d(z,a)^p
 \geq\dist_d(x,F_{j-1})^p-s^p
 >r^p-s^p=s^p.
\]
If instead $\dist_d(z,F_j)\geq s$, the fact that $x\in F_j$
gives $d(x,z)\geq s$. Taking the infimum over both parts of $A_j$
proves the assertion. Consequently, for $x\in F_j\setminus C_j$,
\[
 \dist_d(x,C_j)\leq d(x,0)\leq R
 \leq D\dist_d(x,A_j).
\]
For $x\in F_j\cap C_j$ the same inequality holds because its
left-hand side is zero.

Since $\gamma_j\in\Fp(F_j;M)$, we have
\[
 \dist(w_j,\Fp(F_j;M))
 \leq\norm{w_j-\gamma_j}<1/j\longrightarrow0.
\]
Moreover, $(w_j)$ is bounded and has no $\ell_p$-subsequence,
because its terms belong to $W$, which was assumed to contain
no such sequence. Having verified the remaining hypotheses,
we may apply Lemma~\ref{lem:disjoint-localization} with $S_j=F_j$.
It follows that $\dist(w_j,\Fp(C_j;M))\to0$. On the other hand, each $w_j$ was chosen so that
$\dist(w_j,\Fp(C_j;M))>\eta$. Hence
\[
 \liminf_{j\to\infty}\dist(w_j,\Fp(C_j;M))
 \geq\eta>0,
\]
a contradiction.
\end{proof}

\begin{theorem}[Bounded bases]\label{thm:bounded-Schur}
Let $0<p<q\leq1$ and let $M$ be a bounded pointed $q$-metric space.
Then $\Fp(M)$ has the Schur $p$-property.
\end{theorem}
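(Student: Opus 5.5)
The plan is to combine the Kalton--$\ell_p$ alternative from Lemma~\ref{lem:non-Kalton} with the quantitative selection of Theorem~\ref{thm:Kalton-selection}. Let $W\subseteq\Fp(M)$ be an infinite, bounded, $\delta$-separated set for some $\delta>0$; we must find a sequence in $W$ equivalent to the canonical basis of $\ell_p$. Since $M$ is a bounded pointed $q$-metric space, it is in particular a bounded pointed $p$-metric space, because $0<p<q\leq1$. So Lemma~\ref{lem:non-Kalton} applies to any bounded subset of $\Fp(M)$.

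We split into two cases according to whether $W$ has Kalton's property (Definition~\ref{def:Kalton}).

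\emph{Case 1: $W$ fails Kalton's property.} Lemma~\ref{lem:non-Kalton} directly produces a sequence in $W$ equivalent to the canonical basis of $\ell_p$.

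\emph{Case 2: $W$ has Kalton's property.} Fix any $0<c<\delta/(2^{1/p}\Gamma_pH_{p,q}^{1/p})$. Theorem~\ref{thm:Kalton-selection} gives a sequence $(w_j)$ in $W$ with lower $\ell_p$-estimate $c$. The upper estimate is automatic: by the $p$-triangle inequality,
\[
\norm{\sum_j a_jw_j}^p\leq\Bigl(\sup_{w\in W}\norm w\Bigr)^p\sum_j\abs{a_j}^p .
\]
Hence $(w_j)$ satisfies \eqref{eq:ellp-equivalence} and is equivalent to the canonical basis of $\ell_p$. The terms $w_j$ are distinct by construction in that proof, so this is a genuine sequence of distinct elements of $W$.

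There is essentially no obstacle, since all the work sits in the two earlier results. The only points to check are hypothesis matching. Theorem~\ref{thm:Kalton-selection} requires $M$ to be a $q$-metric space with $p<q$, which holds. Lemma~\ref{lem:non-Kalton} requires only a bounded $p$-metric base, which we have noted above. Finally, the definition of the Schur $p$-property asks only for some sequence inside $W$, which each case provides.
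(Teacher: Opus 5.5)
Your proof is correct and is the same as the paper's argument. You split according to whether $W$ has Kalton's property, using Theorem~\ref{thm:Kalton-selection} in one case and Lemma~\ref{lem:non-Kalton} in the other (valid since every $q$-metric is a $p$-metric), and your explicit upper estimate via the $p$-triangle inequality just spells out a detail the paper leaves implicit.
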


\begin{proof}
Let $W\subseteq\Fp(M)$ be bounded, infinite, and uniformly separated.
If $W$ has Kalton's property, apply Theorem~\ref{thm:Kalton-selection}.
If it does not, apply Lemma~\ref{lem:non-Kalton}. The latter lemma
applies because every $q$-metric is a $p$-metric when $p<q$.
\end{proof}

\section{Arbitrary \texorpdfstring{$q$}{q}-metric spaces}
\label{sec:general}

The remaining task is to remove boundedness. We first prove a localization
lemma whose constants are independent of the radii of the selected
supports. For a pointed $p$-metric space $M$, put
\[
 B_M(R)=\{x\in M:d(x,0)\leq R\}\qquad(R>0).
\]

\subsection{Localization at infinity}

\begin{lemma}\label{lem:localization-infinity}
Let $M$ be a pointed $p$-metric space and let $W\subseteq\Fp(M)$ be
bounded. If $W$ contains no sequence equivalent to the canonical basis
of $\ell_p$, then for every $\varepsilon>0$ there is $R>0$ such that
\begin{equation}\label{eq:localization-infinity}
 W\subseteq\Fp(B_M(R);M)+\varepsilon B_{\Fp(M)}.
\end{equation}
\end{lemma}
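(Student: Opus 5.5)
We argue by contradiction and mirror the proof of Lemma~\ref{lem:non-Kalton}, replacing finite nets by balls and feeding the construction into Lemma~\ref{lem:disjoint-localization}. Suppose that for some $\varepsilon>0$ and every $R>0$ there is $w\in W$ with $\dist(w,\Fp(B_M(R);M))>\varepsilon$. We will build a sequence $(w_j)$ in $W$, radii $R_j\to\infty$, and closed pointed sets $A_j$, $C_j$ and finite pointed supports $S_j$ satisfying the hypotheses of Lemma~\ref{lem:disjoint-localization} with a constant $D$ independent of $j$. Then $\dist(w_j,\Fp(C_j;M))\to0$ will contradict the choice of the $w_j$.

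\textbf{Recursion.} Put $R_0=1$. At step $j$, set $C_j=B_M(R_{j-1})$ and choose $w_j\in W$ with $\dist(w_j,\Fp(C_j;M))>\varepsilon$. Choose a finitely supported $\gamma_j$ with $\norm{w_j-\gamma_j}<1/j$, and let $S_j$ be a finite pointed support of $\gamma_j$. Then pick $R_j\geq 2^{2/p}\max\{R_{j-1},\max_{x\in S_j}d(x,0)\}$; the exact factor is to be tuned. Define the annular set
\[
 A_j=B_M(2^{-1/p}R_{j-1})\ \cup\ \{x\in M:d(x,0)\geq 2^{1/p}R'_j\},
 \qquad R'_j=\max_{x\in S_j}d(x,0)\leq R_j.
\]
The complements $M\setminus A_j$ lie in the shells $\{2^{-1/p}R_{j-1}<d(x,0)<2^{1/p}R'_j\}$. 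These shells are pairwise disjoint provided $2^{1/p}R'_j\leq 2^{-1/p}R_j$, which we arrange by the growth condition on $R_j$. Both $A_j$ and $C_j$ are closed and pointed.

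\textbf{Depth comparison.} This is the step we expect to be the main obstacle, since we need \eqref{eq:disjoint-depth} with $D$ independent of $j$, i.e.\ scale invariant. Take $x\in S_j\setminus C_j$, so that $d(x,0)>R_{j-1}$. The left-hand side satisfies $\dist_d(x,C_j)\leq d(x,0)$. For the right-hand side we bound $\dist_d(x,A_j)$ from below by a fixed multiple of $d(x,0)$, using the $p$-triangle inequality against each piece of $A_j$:
\begin{itemize}[label={}]
\item if $d(z,0)\leq2^{-1/p}R_{j-1}$, then $d(x,z)^p\geq d(x,0)^p-\tfrac12 R_{j-1}^p\geq\tfrac12 d(x,0)^p$;
\item if $d(z,0)\geq 2^{1/p}R'_j\geq 2^{1/p}d(x,0)$, then $d(x,z)^p\geq d(z,0)^p-d(x,0)^p\geq d(x,0)^p$.
\end{itemize}
Hence $\dist_d(x,A_j)\geq 2^{-1/p}d(x,0)$, and \eqref{eq:disjoint-depth} holds with $D=2^{1/p}$. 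Points of $S_j\cap C_j$ are trivial, since the left-hand side vanishes. If the first estimate needs strict room, we can shrink the inner radius to $2^{-2/p}R_{j-1}$ and adjust $D$ accordingly.

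\textbf{Conclusion.} The sequence $(w_j)$ is bounded, has no $\ell_p$-subsequence because it lies in $W$, and satisfies $\dist(w_j,\Fp(S_j;M))<1/j$. Lemma~\ref{lem:disjoint-localization} therefore yields $\dist(w_j,\Fp(C_j;M))\to0$, contradicting $\dist(w_j,\Fp(C_j;M))>\varepsilon$ for all $j$. One remaining point to check is that the $w_j$ may repeat. This is harmless: the contradiction does not require distinctness, and Lemma~\ref{lem:disjoint-localization} applies to any bounded sequence without an $\ell_p$-subsequence. Note, however, that a constant subsequence is not equivalent to the $\ell_p$ basis, so that lemma's use of Lemma~\ref{lem:diagonal-detector} rules out persistent large diagonals regardless of repetitions.
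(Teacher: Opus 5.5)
Your argument is correct and is essentially the paper's proof. It uses the same contradiction scheme: $C_j$ is a ball, and $A_j$ is a smaller ball together with the exterior of a larger one. Your radii $2^{-1/p}R_{j-1}$ and $2^{1/p}R'_j$ play the roles of the paper's $r_j$ and $s_j$. You obtain the same depth estimate $\dist_d(x,A_j)^p\geq d(x,0)^p/2$, hence $D=2^{1/p}$, and then apply Lemma~\ref{lem:disjoint-localization}.

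One small point: negating the conclusion only gives $\dist(w,\Fp(B_M(R);M))\geq\varepsilon$, not $>\varepsilon$. This is harmless; work with any $\eta<\varepsilon$, as the paper does. Your fallback remarks about shrinking the inner radius and about repeated $w_j$ are unnecessary.
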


\begin{proof}
Suppose that \eqref{eq:localization-infinity} fails. Choose $\eta>0$ such that for every $R>0$
some $w\in W$ satisfies
\begin{equation}\label{eq:infinity-failure}
 \dist(w,\Fp(B_M(R);M))>\eta.
\end{equation}
We recursively select radii $r_j,s_j>0$, vectors $w_j\in W$,
finite pointed sets $S_j$, and vectors
$\gamma_j\in\spn\delta_M(S_j)$. Choose $r_1>0$ and, at step
$j>1$, choose $r_j>s_{j-1}$. Put $C_j=B_M(2^{1/p}r_j)$, choose
$w_j$ with $\dist(w_j,\Fp(C_j;M))>\eta$, and choose $S_j$ and
$\gamma_j$ with $\norm{w_j-\gamma_j}<1/j$. Choose $s_j$ so that
\begin{equation}\label{eq:infinity-radii}
 s_j>2^{1/p}r_j,
 \qquad s_j^p>2\max_{x\in S_j}d(x,0)^p,
\end{equation}
and define
\[
 A_j=B_M(r_j)\ \cup\ \{x\in M:d(x,0)\geq s_j\}.
\]
The sets $A_j$ and $C_j$ are closed and pointed, and the complements
$M\setminus A_j=\{x:r_j<d(x,0)<s_j\}$ are pairwise disjoint.

We verify \eqref{eq:disjoint-depth} with $D=2^{1/p}$. If
$x\in S_j\setminus C_j$ and $t=d(x,0)$, then
\begin{equation}\label{eq:infinity-depth}
 \dist_d(x,A_j)^p\geq t^p/2.
\end{equation}
Indeed, for $z\in B_M(r_j)$, the reverse $p$-triangle inequality gives
\[
 d(x,z)^p\geq t^p-d(z,0)^p\geq t^p-r_j^p>t^p/2,
\]
because $t>2^{1/p}r_j$. For $d(z,0)\geq s_j$, it gives
\[
 d(x,z)^p\geq d(z,0)^p-t^p\geq s_j^p-t^p>t^p
\]
by \eqref{eq:infinity-radii}. Taking the infimum proves
\eqref{eq:infinity-depth}. Since $0\in C_j$, we obtain
\[
 \dist_d(x,C_j)\leq t\leq2^{1/p}\dist_d(x,A_j).
\]
For $x\in S_j\cap C_j$ this inequality holds because its
left-hand side is zero.

Now $\dist(w_j,\Fp(S_j;M))\leq\norm{w_j-\gamma_j}\to0$.
Lemma~\ref{lem:disjoint-localization} yields
$\dist(w_j,\Fp(C_j;M))\to0$, contradicting the choice of $w_j$.
\end{proof}

\subsection{Clipping in a universal \texorpdfstring{$q$}{q}-metric space}

\begin{lemma}\label{lem:ellinfty-q}
Let $I$ be a set and let $0<p<q\leq1$. Equip $\ell_\infty(I)$ with
the pointed $q$-metric
\[
 d_q(x,y)=\norm{x-y}_\infty^{1/q}.
\]
Then $\Fp(\ell_\infty(I),d_q)$ has the Schur $p$-property.
\end{lemma}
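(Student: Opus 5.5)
Let $N=(\ell_\infty(I),d_q)$ and let $W\subseteq\Fp(N)$ be bounded, infinite and uniformly separated. Suppose, towards a contradiction, that $W$ contains no $\ell_p$-sequence. The plan is to reduce to the bounded case, Theorem~\ref{thm:bounded-Schur}, by a Lipschitz retraction of $N$ onto a ball. Lemma~\ref{lem:localization-infinity} lets us push $W$ into such a ball up to a small error, and a linearized clipping map then moves everything into a bounded $q$-metric space without destroying separation.

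First fix $\delta>0$ with $W$ $\delta$-separated, and take $\varepsilon>0$ small compared with $\delta$. By Lemma~\ref{lem:localization-infinity} there is $R>0$ with $W\subseteq\Fp(B_N(R);N)+\varepsilon B_{\Fp(N)}$. Note that $B_N(R)$ is the $\ell_\infty$-ball of radius $R^q$. Next define the coordinatewise clipping $P(x)_i=\max\{-R^q,\min\{x_i,R^q\}\}$. The map $P$ is a $1$-Lipschitz retraction of $\ell_\infty(I)$ onto $B_N(R)$ in the sup-norm, so it is also $1$-Lipschitz for $d_q$. It is pointed and fixes $B_N(R)$. Its linearization, composed with $\delta_N$, gives a contractive linear map $\widehat{P}\colon\Fp(N)\to\Fp(N)$. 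This map is the identity on $\Fp(B_N(R);N)$, since it fixes each Dirac vector there, and its range is contained in $\Fp(B_N(R);N)$.

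For $w\in W$, choose $y_w\in\Fp(B_N(R);N)$ with $\norm{w-y_w}\leq\varepsilon$. Then $\norm{\widehat P w-w}^p\leq\norm{\widehat P(w-y_w)}^p+\norm{y_w-w}^p\leq2\varepsilon^p$. Taking $2\cdot2\varepsilon^p<\delta^p/2$ ensures that $\widehat P(W)$ is $(\delta/2^{1/p})$-separated, and in particular infinite. It is also bounded and contained in $\Fp(B_N(R);N)$. By Theorem~\ref{thm:canonical-input}, this space is isomorphic to $\Fp(B_N(R))$, and $B_N(R)$ is a bounded $q$-metric space. Theorem~\ref{thm:bounded-Schur} together with Lemma~\ref{lem:Schur-stability} therefore yields a sequence $(\widehat P w_j)$ equivalent to the $\ell_p$ basis, with lower constant $c$.

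It remains to transfer this back to $W$. Since $\widehat P$ is contractive, $\norm{\sum a_j w_j}\geq\norm{\widehat P\sum a_jw_j}\geq c(\sum|a_j|^p)^{1/p}$. The upper estimate is automatic from boundedness and the $p$-triangle inequality, so $(w_j)$ is an $\ell_p$-sequence in $W$. This contradicts our assumption.

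The one point needing care is that $\widehat P$ fixes $\Fp(B_N(R);N)$ rather than being merely contractive, since that is what converts the $\varepsilon$-approximation into the estimate $\norm{\widehat Pw-w}\lesssim\varepsilon$. Once this is checked, the argument is direct.
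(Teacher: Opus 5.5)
Your proof is correct and follows essentially the same route as the paper. The coordinatewise clipping retraction linearizes to a contraction that fixes $\Fp(B_N(R);N)$, Lemma~\ref{lem:localization-infinity} gives $\norm{w-\widehat Pw}^p\leq2\varepsilon^p$ on $W$, Theorem~\ref{thm:bounded-Schur} handles the range, and contractivity of $\widehat P$ pulls the lower $\ell_p$-estimate back to $W$. The only difference is bookkeeping: the paper shows that a bounded set with no $\ell_p$-sequence is totally bounded, whereas you check directly that $\widehat P$ keeps $W$ uniformly separated, which works equally well.
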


\begin{proof}
Put $\Omega=(\ell_\infty(I),d_q)$ and $X=\Fp(\Omega)$. For $R>0$,
let
\[
 \Omega_R=B_\Omega(R)
 =\{x\in\ell_\infty(I):\norm{x}_\infty\leq R^q\}.
\]
Define the coordinatewise clipping map $c_R\colon\Omega\to\Omega_R$ as
\[
 (c_Rx)(i)=\max\{-R^q,\min\{x(i),R^q\}\}\qquad(i\in I).
\]
The scalar clipping function is $1$-Lipschitz for the ordinary absolute
value. Hence $c_R$ is contractive for the supremum norm and also for
$d_q$. It is a pointed retraction onto $\Omega_R$, which is a bounded $q$-metric space. Let
$P_R:X\to X$ be the linearization of $\delta_\Omega c_R$.
It is a contractive projection with range $\Fp(\Omega_R;\Omega)$
and it fixes that range pointwise. The canonical inclusion of
$\Fp(\Omega_R)$ into $X$ is isometric: its contractive left inverse
is the linearization of $c_R:\Omega\to\Omega_R$. Thus the range of
$P_R$ has the Schur $p$-property by Theorem~\ref{thm:bounded-Schur}.

Let $W\subseteq X$ be bounded and contain no $\ell_p$-sequence. We
show that $W$ is totally bounded. For each $R$, the set $P_R(W)$ is
totally bounded. Otherwise it would contain an infinite uniformly
separated sequence, and the Schur $p$-property of the range of $P_R$
would give a sequence $(w_j)$ in $W$ such that $(P_Rw_j)$ has a
positive lower $\ell_p$-estimate. Since $P_R$ is contractive, the
same lower estimate holds for $(w_j)$. Its upper estimate follows from
boundedness, contradicting our assumption on $W$.

Given $\varepsilon>0$, Lemma~\ref{lem:localization-infinity} provides
$R$ such that every $w\in W$ can be written as $w=u+e$, where
$u\in\Fp(\Omega_R;\Omega)$ and $\norm e\leq\varepsilon$.
Because $P_Ru=u$,
\begin{equation}\label{eq:clipping-uniform}
 \norm{w-P_Rw}^p
 =\norm{e-P_Re}^p\leq2\norm e^p\leq2\varepsilon^p.
\end{equation}
To see total boundedness explicitly, let $t>0$ and choose
$\varepsilon>0$ with $3\varepsilon^p<t^p$. A finite
$\varepsilon$-net $\{v_1,\ldots,v_m\}$ for $P_R(W)$ then satisfies,
for each $w\in W$ and a suitable $k$,
\[
 \norm{w-v_k}^p
 \leq\norm{w-P_Rw}^p+\norm{P_Rw-v_k}^p
 \leq3\varepsilon^p<t^p.
\]
Thus $W$ is totally bounded. Consequently, every bounded subset of $X$
which is not totally bounded contains an $\ell_p$-sequence. In
particular, $X$ has the Schur $p$-property.
\end{proof}

\begin{theorem}\label{thm:main}
Let $0<p<q\leq1$ and let $(M,d,0)$ be a pointed $q$-metric space.
Then $\Fp(M,d)$ has the Schur $p$-property.
\end{theorem}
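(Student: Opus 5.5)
The plan is to embed $M$ into the universal $q$-metric space of Lemma~\ref{lem:ellinfty-q} and then pass to a subspace. We may assume $M$ is complete, by \eqref{eq:completion-isometry}.

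\emph{Step 1: an isometric embedding into $\ell_\infty$.} The function $d^q$ is a genuine metric on $M$. A Kuratowski–Fréchet embedding therefore gives an isometry
\[
 \kappa\colon (M,d^q)\to(\ell_\infty(I),\norm{\cdot}_\infty),\qquad I=M,\qquad \kappa(x)(z)=d(x,z)^q-d(0,z)^q .
\]
This map is pointed, since $\kappa(0)=0$. Taking $q$th roots, $\kappa$ becomes an isometry
\[
 (M,d)\to\Omega=(\ell_\infty(I),d_q),\qquad d_q(u,v)=\norm{u-v}_\infty^{1/q},
\]
because $d_q(\kappa x,\kappa y)=(d(x,y)^q)^{1/q}=d(x,y)$.

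\emph{Step 2: transfer the property.} Linearizing $\kappa$ identifies $\Fp(M,d)$ isometrically with $\Fp(\kappa(M))$, where $\kappa(M)\subseteq\Omega$ carries the induced distance. Theorem~\ref{thm:canonical-input}, applied to the inclusion $0\in\kappa(M)\subseteq\Omega$ of pointed $p$-metric spaces (every $q$-metric is a $p$-metric), shows that $\Fp(\kappa(M))$ is isomorphic, with constant at most $\Gamma_p$, to the closed subspace $\Fp(\kappa(M);\Omega)$ of $\Fp(\Omega)$. By Lemma~\ref{lem:ellinfty-q}, $\Fp(\Omega)$ has the Schur $p$-property. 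Lemma~\ref{lem:Schur-stability} then passes the property to the closed subspace $\Fp(\kappa(M);\Omega)$, and by isomorphic invariance to $\Fp(M,d)$.

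\emph{Main obstacle.} The only real difficulty has already been handled inside Lemma~\ref{lem:ellinfty-q}: the clipping retraction together with localization at infinity. With that lemma available, the single point to check here is that the Kuratowski map is still an isometry after the snowflaking. This holds because $d^q$ satisfies the triangle inequality, so $\kappa$ is an isometry for $d^q$ and the sup-norm, and taking $1/q$th powers preserves this. A non-separable $M$ causes no difficulty, since $I$ is an arbitrary index set.
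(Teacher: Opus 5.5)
Your proposal is correct and follows the paper's proof essentially verbatim: the Kuratowski--Fr\'echet embedding of $(M,d^q)$ into $\ell_\infty(M)$ becomes, after snowflaking, a pointed isometry into $(\ell_\infty(M),\norm{\cdot}_\infty^{1/q})$. From there, Theorem~\ref{thm:canonical-input}, Lemma~\ref{lem:ellinfty-q}, and Lemma~\ref{lem:Schur-stability} finish the argument exactly as you describe. The reduction to complete $M$ is harmless but not needed.
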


\begin{proof}
The ordinary metric $D=d^q$ admits the following pointed isometric
embedding into $\ell_\infty(M)$:
\[
 j(x)(z)=D(x,z)-D(0,z)\qquad(x,z\in M).
\]
Consequently $j$ is a pointed isometry of $(M,d)$ into
$(\ell_\infty(M),\norm{\cdot}_\infty^{1/q})$.
Theorem~\ref{thm:canonical-input} identifies $\Fp(M)$ isomorphically
with the closed span of its Dirac vectors in
$\Fp(\ell_\infty(M),\norm{\cdot}_\infty^{1/q})$. The latter space
has the Schur $p$-property by Lemma~\ref{lem:ellinfty-q}. Apply
Lemma~\ref{lem:Schur-stability}.
\end{proof}

\section{Applications}
\label{sec:consequences}
\label{sec:applications}

The universal Schur $p$-property has consequences for the linear and
nonlinear geometry of Lipschitz-free $p$-spaces.  We collect the most direct
ones in this section.  Besides giving structural information on
$\Fp(M)$ itself, they exhibit a sharp contrast between the linear and the
Lipschitz structures of nonlocally convex spaces.

Unless another assumption is explicitly stated, $M$ in this section
is a pointed $q_0$-metric space for some $p<q_0\leq1$. The exponent $q$
in the higher-convexity consequences may be any number with $p<q\leq1$;
it need not equal $q_0$.

\subsection{Saturation, subsequences, and higher-convexity obstructions}

\begin{lemma}\label{lem:Schur-saturated}
If a $p$-Banach space $X$ has the Schur $p$-property, then $X$ is
$\ell_p$-saturated.
\end{lemma}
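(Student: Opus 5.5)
To prove that $X$ is $\ell_p$-saturated, we must show that every infinite-dimensional closed subspace $Y\subseteq X$ contains a subspace isomorphic to $\ell_p$. By Lemma~\ref{lem:Schur-stability}, $Y$ inherits the Schur $p$-property, so it suffices to show that every infinite-dimensional $p$-Banach space $Y$ with the Schur $p$-property contains a sequence equivalent to the canonical basis of $\ell_p$. The closed span of such a sequence is then isomorphic to $\ell_p$ by \eqref{eq:ellp-equivalence}.

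The plan is to produce an infinite bounded uniformly separated subset of $Y$ and then apply the definition of the Schur $p$-property. To get the separated set, we use a Riesz-type construction adapted to quasi-norms. Recursively choose $y_1,y_2,\dots\in S_Y$ as follows. Let $Y_n=\spn\{y_1,\dots,y_n\}$, which is finite-dimensional and hence closed and proper in $Y$. Pick $u\in Y\setminus Y_n$. Since $Y_n$ is finite-dimensional, bounded subsets of it are totally bounded, so the infimum $\dist(u,Y_n)>0$ is attained at some $z\in Y_n$. Put $y_{n+1}=(u-z)/\norm{u-z}$. Then $\dist(y_{n+1},Y_n)=1$, since $Y_n$ is invariant under translation by $z$ and under scaling. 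Consequently $\norm{y_{n+1}-y_k}\geq1$ for every $k\leq n$. The resulting set $\{y_n\}$ is infinite, contained in the unit sphere, and $1$-separated.

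The Schur $p$-property then yields a sequence in $\{y_n\}$ equivalent to the canonical basis of $\ell_p$, and we are done. The only step needing care is the attainment of the distance in the quasi-normed setting. If we prefer to avoid it, it suffices to choose $z$ with $\norm{u-z}^p\leq 2\dist(u,Y_n)^p$. This gives $\dist(y_{n+1},Y_n)\geq 2^{-1/p}$, which is still a uniform separation and is equally sufficient for the argument.
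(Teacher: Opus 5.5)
Your proposal is correct and follows the paper's proof: find an infinite uniformly separated subset of $S_Y$, then apply the Schur $p$-property to obtain an $\ell_p$-sequence whose closed span lies in $Y$. The only difference is that you prove the existence of the separated set yourself, via a Riesz-type argument that works for $p$-norms and gives $1$-separation, whereas the paper cites \cite[Proposition~2.6]{AABCSchur} for an infinite $1/2$-separated subset of $S_Y$.
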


\begin{proof}
Let $Y\subseteq X$ be an infinite-dimensional closed subspace.
By \cite[Proposition~2.6]{AABCSchur}, there is an infinite
$1/2$-separated set $A\subseteq S_Y$. The Schur $p$-property of
$X$ gives a sequence in $A$ equivalent to the canonical basis
of $\ell_p$. Its closed linear span is contained in $Y$ and
is isomorphic to $\ell_p$. Since $Y$ was arbitrary, $X$ is
$\ell_p$-saturated.
\end{proof}

\begin{corollary}\label{cor:saturation}
For every $M$ as above, the space $\Fp(M)$ is $\ell_p$-saturated.
\end{corollary}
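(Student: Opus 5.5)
The corollary follows by chaining two results already available. Since $M$ is a pointed $q_0$-metric space with $p<q_0\leq1$, Theorem~\ref{thm:main} (applied with $q=q_0$) shows that $\Fp(M)$ has the Schur $p$-property. Lemma~\ref{lem:Schur-saturated} then converts the Schur $p$-property into $\ell_p$-saturation for any $p$-Banach space. So the plan is simply to apply Theorem~\ref{thm:main} and then Lemma~\ref{lem:Schur-saturated}.

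Two routine points need checking. First, $\Fp(M)$ is a $p$-Banach space by the construction recalled in Section~\ref{sec:preliminaries}, so Lemma~\ref{lem:Schur-saturated} applies. Second, completeness or boundedness of $M$ is not needed. Theorem~\ref{thm:main} has no such hypotheses, and passing to the completion does not change the free space, by \eqref{eq:completion-isometry}.

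For the reader, it is worth recording what the mechanism amounts to. Given an infinite-dimensional closed subspace $Y\subseteq\Fp(M)$, we pick an infinite $1/2$-separated subset of $S_Y$ by \cite[Proposition~2.6]{AABCSchur}. This subset is bounded and uniformly separated in $\Fp(M)$, so it contains an $\ell_p$-sequence. The closed span of that sequence lies in $Y$.

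No step is a genuine obstacle here. All the difficulty sits in Theorem~\ref{thm:main}, which is taken as given.
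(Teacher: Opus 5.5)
Your proposal is correct and is exactly the paper's argument: Theorem~\ref{thm:main} (with $q=q_0$) gives the Schur $p$-property of $\Fp(M)$, and Lemma~\ref{lem:Schur-saturated} converts it into $\ell_p$-saturation. Your extra remarks on completeness and your unpacking of the saturation mechanism are accurate but not needed.
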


\begin{proof}
Combine Theorem~\ref{thm:main} with Lemma~\ref{lem:Schur-saturated}.
\end{proof}

We next record an elementary obstruction which will be used repeatedly.

\begin{lemma}\label{lem:ellp-not-qBanach}
Let $p<q\leq1$. Then $\ell_p$ is not isomorphic to a $q$-Banach space.
\end{lemma}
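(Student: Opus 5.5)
We argue by contradiction. Suppose $T\colon\ell_p\to X$ is an isomorphism onto a $q$-Banach space $X$, with $q$-norm $\vertiii{\cdot}$ satisfying $\vertiii{x+y}^q\leq\vertiii{x}^q+\vertiii{y}^q$. Write $(e_n)$ for the canonical basis of $\ell_p$. The plan is to test the $q$-triangle inequality on the block vectors
\[
 u_N=\sum_{n=1}^{N}e_n ,
\]
where the two quasi-norms force incompatible growth rates in $N$.

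The key steps are as follows.
\begin{enumerate}[label=(\roman*)]
\item In $\ell_p$ we have $\norm{u_N}=N^{1/p}$.
\item In $X$, iterating the $q$-triangle inequality gives
\[
 \vertiii{Tu_N}^q\leq\sum_{n=1}^N\vertiii{Te_n}^q\leq N\norm{T}^q ,
\]
so that $\vertiii{Tu_N}\leq\norm T N^{1/q}$.
\item Using the bounded inverse, we get
\[
 N^{1/p}=\norm{u_N}\leq\norm{T^{-1}}\vertiii{Tu_N}\leq\norm{T^{-1}}\norm T N^{1/q}.
\]
\item Since $1/p>1/q$, this inequality fails for large $N$, which gives the contradiction.
\end{enumerate}

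One point needs care. "Isomorphic to a $q$-Banach space" might mean that $\ell_p$ carries some equivalent quasi-norm which is $q$-subadditive. That situation is covered by the same computation, applied with $T$ equal to the identity map from $(\ell_p,\norm{\cdot})$ to $(\ell_p,\vertiii{\cdot})$; equivalence of the two quasi-norms supplies the two constants used above.

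I expect no real obstacle here. The only point needing care is to use operator norms with respect to the given quasi-norms, so that the constants in step (iii) are independent of $N$.
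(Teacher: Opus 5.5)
Your proposal is correct and follows the same route as the paper, which assumes an equivalent $q$-norm on $\ell_p$ and compares $cN^{1/p}\leq\vertiii{\sum_{j=1}^N e_j}\leq CN^{1/q}$ for large $N$. Your use of an isomorphism $T$ onto a $q$-Banach space is just an equivalent packaging of the same constants, as you note yourself.
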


\begin{proof}
Suppose that $\vertiii{\cdot}$ is an equivalent $q$-norm on $\ell_p$.
Choose $c,C>0$ such that
\[
 c\norm{x}_p\leq\vertiii{x}\leq C\norm{x}_p
 \qquad(x\in\ell_p).
\]
For the first $N$ unit vectors, $q$-subadditivity gives
\[
 cN^{1/p}
 =c\norm{\sum_{j=1}^Ne_j}_p
 \leq\vertiii{\sum_{j=1}^Ne_j}
 \leq\left(\sum_{j=1}^N\vertiii{e_j}^q\right)^{1/q}
 \leq CN^{1/q}.
\]
This is impossible for large $N$, because $1/p>1/q$.
\end{proof}

\begin{corollary}\label{cor:no-q-Banach}\label{cor:no-Banach}
Let $M$ be as above and let $p<q\leq1$. Then $\Fp(M)$ contains no
isomorphic copy of an infinite-dimensional $q$-Banach space. In particular,
it contains no isomorphic copy of an infinite-dimensional Banach space.
\end{corollary}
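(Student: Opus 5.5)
The plan is to argue by contradiction, combining Corollary~\ref{cor:saturation} with Lemma~\ref{lem:ellp-not-qBanach}. Suppose $Y\subseteq\Fp(M)$ is isomorphic, via some isomorphism $T\colon Z\to Y$, to an infinite-dimensional $q$-Banach space $Z$. The first step is to make sure $Y$ is a closed subspace. Since $Z$ is complete and $T$ is an isomorphism onto its image, the image is complete, and therefore it is closed in $\Fp(M)$.

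Next, apply Corollary~\ref{cor:saturation}. It says $\Fp(M)$ is $\ell_p$-saturated, so the infinite-dimensional closed subspace $Y$ contains a closed subspace $Y_0$ isomorphic to $\ell_p$. Pull $Y_0$ back to $Z_0=T^{-1}(Y_0)$. This is a closed subspace of $Z$.

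The only point that needs care is checking that $Z_0$ is still a $q$-Banach space. The restriction of the $q$-norm of $Z$ to $Z_0$ still satisfies the $q$-triangle inequality, and $Z_0$ is complete because it is closed in $Z$. So $Z_0$ is a $q$-Banach space that is isomorphic to $\ell_p$. This contradicts Lemma~\ref{lem:ellp-not-qBanach}, since $p<q$.

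For the final assertion, take $q=1$. Here one uses that a Banach space is a $1$-Banach space, and $p<1$ holds throughout the paper. No step looks like a real obstacle. The substantive input is entirely the saturation result, which already rests on Theorem~\ref{thm:main}.
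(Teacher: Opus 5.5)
Your proposal is correct and follows essentially the same argument as the paper: $\ell_p$-saturation (Corollary~\ref{cor:saturation}) places a copy of $\ell_p$ inside the subspace, and that copy inherits a $q$-norm as a closed subspace of a $q$-Banach space, contradicting Lemma~\ref{lem:ellp-not-qBanach}. Your remarks that the isomorphic image is closed and that the copy pulls back to $Z_0=T^{-1}(Y_0)$ only spell out details the paper leaves implicit.
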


\begin{proof}
Suppose that an infinite-dimensional closed subspace $Y\subseteq\Fp(M)$ is
isomorphic to a $q$-Banach space. By Corollary~\ref{cor:saturation}, $Y$
contains a closed subspace isomorphic to $\ell_p$. Every closed subspace of
a $q$-Banach space is again a $q$-Banach space, contradicting
Lemma~\ref{lem:ellp-not-qBanach}.
\end{proof}

The defining selection property also admits a useful sequential
reformulation.

\begin{corollary}[Subsequence dichotomy]\label{cor:subsequence-dichotomy}
Every bounded sequence in $\Fp(M)$ has either a norm-convergent subsequence
or a subsequence equivalent to the canonical basis of $\ell_p$.
\end{corollary}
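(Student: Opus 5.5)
Let $(x_n)$ be a bounded sequence in $\Fp(M)$. We split according to whether $(x_n)$ has a Cauchy subsequence. If it has one, completeness of the $p$-Banach space $\Fp(M)$ makes that subsequence norm-convergent, and we are in the first alternative.

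So assume no subsequence of $(x_n)$ is Cauchy. We use the standard fact that a sequence in a metric space has a Cauchy subsequence exactly when its range is totally bounded, once repetitions are handled. If some value appeared infinitely often, the corresponding constant subsequence would converge. Hence, after passing to a subsequence, the terms are pairwise distinct and the set $\{x_n\}$ is not totally bounded. Here the metric is $d(x,y)=\norm{x-y}^p$.

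Failure of total boundedness yields some $\delta>0$ and an infinite $\delta$-separated subset. Concretely, we extract greedily: choose $x_{n_1}$, then recursively a later index $n_{k+1}>n_k$ with $\norm{x_{n_{k+1}}-x_{n_i}}\geq\delta$ for all $i\leq k$. This is possible because finitely many $\delta$-balls cannot cover any tail of the sequence. If they could cover some tail, a pigeonhole argument iterated over $\delta=1/m$ and a diagonal choice would produce a Cauchy subsequence. The diagonal choice works because if every $\delta$-ball cover of every tail were finite, we could nest infinite index sets lying in single $(1/m)$-balls.

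The set $A=\{x_{n_k}:k\in\NN\}$ is then infinite, bounded, and uniformly separated. Theorem~\ref{thm:main} gives a sequence in $A$ equivalent to the canonical basis of $\ell_p$. Its elements are distinct terms $x_{n_{k_j}}$, but possibly not in increasing order of $k_j$. Reordering does not matter, since equivalence with the unit vector basis of $\ell_p$ is invariant under permutation: the estimates \eqref{eq:ellp-equivalence} are symmetric in the coefficients. We may therefore relabel so that $k_j$ is increasing, obtaining a genuine subsequence of $(x_n)$.

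The only mildly delicate point is the extraction of an infinite separated subsequence with increasing indices from a sequence lacking Cauchy subsequences. This is a routine diagonal argument, and the rest is immediate from Theorem~\ref{thm:main}.
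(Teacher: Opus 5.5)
Your argument is correct and follows the paper's proof. Either the range is totally bounded, giving a Cauchy and hence convergent subsequence by completeness, or you extract a uniformly separated subsequence and apply Theorem~\ref{thm:main}; your remark on permutation invariance fills in the small reordering step that the paper leaves implicit. Two small points: your ``standard fact'' holds in only one direction (totally bounded range implies a Cauchy subsequence), but that is the only direction you use; and the separated subsequence comes directly from listing an infinite $\delta$-separated subset of the range in increasing index order, so no diagonal argument is needed.
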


\begin{proof}
Let $(u_n)$ be bounded. If its range is relatively compact, then $(u_n)$ has
a norm-convergent subsequence. Otherwise its range is not totally bounded,
so there are $\varepsilon>0$ and a subsequence $(u_{n_k})$ such that
\[
 \norm{u_{n_k}-u_{n_l}}\geq\varepsilon
 \qquad(k\ne l).
\]
Theorem~\ref{thm:main} applied to the set $\{u_{n_k}:k\in\NN\}$ yields a
further subsequence equivalent to the canonical basis of $\ell_p$.
\end{proof}

\subsection{An operator dichotomy}

We say that a bounded linear operator $T:E\to Y$ \emph{fixes a copy of
$\ell_p$} if there is a closed subspace $E_0\subseteq E$, isomorphic to
$\ell_p$, such that $T|_{E_0}$ is an isomorphic embedding.

\begin{proposition}[Compact-or-$\ell_p$ dichotomy]
\label{prop:operator-dichotomy}
Let $E$ be a $p$-Banach space, let $M$ be as above, and let
$T:E\to\Fp(M)$ be bounded and linear. Then exactly one of the following
alternatives holds:
\begin{enumerate}[label=\textup{(\roman*)}]
\item $T$ is compact;
\item $T$ fixes a copy of $\ell_p$.
\end{enumerate}
\end{proposition}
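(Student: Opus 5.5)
We show first that the two alternatives exclude each other, and then that at least one of them holds.

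\emph{Mutual exclusivity.} Suppose $T$ fixes a copy of $\ell_p$. Then there is a closed subspace $E_0\subseteq E$, isomorphic to $\ell_p$, with $\norm{Tx}\geq c\norm{x}$ for $x\in E_0$ and some $c>0$. Let $(e_n)$ be the image in $E_0$ of the unit vector basis of $\ell_p$. It is bounded and uniformly separated, since $\norm{e_n-e_m}\geq 2^{1/p}/C$ for the isomorphism constant $C$. Hence $(Te_n)$ is uniformly separated, with $\norm{Te_n-Te_m}\geq c\,2^{1/p}/C$. So $(Te_n)$ has no convergent subsequence, and $T$ is not compact.

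\emph{One alternative always holds.} Suppose $T$ is not compact. Then $T(B_E)$ is not totally bounded, so there are $\varepsilon>0$ and a sequence $(x_n)$ in $B_E$ with $\norm{Tx_n-Tx_m}\geq\varepsilon$ for $n\neq m$.

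By Theorem~\ref{thm:main}, the infinite, bounded, uniformly separated set $\{Tx_n\}$ contains a sequence equivalent to the canonical basis of $\ell_p$. Passing to a subsequence and relabelling, we may assume there is $c>0$ with
\[
 c^p\sum_n\abs{a_n}^p\leq\norm{\sum_n a_nTx_n}^p
\]
for every finitely supported scalar sequence $(a_n)$.

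We now transfer this estimate back to $E$. Since $x_n\in B_E$ and $E$ is $p$-Banach,
\[
 \norm{\sum_n a_nx_n}^p\leq\sum_n\abs{a_n}^p.
\]
In the other direction,
\[
 \norm{\sum_n a_nx_n}\geq\norm T^{-1}\norm{\sum_n a_nTx_n}\geq c\norm T^{-1}\Bigl(\sum_n\abs{a_n}^p\Bigr)^{1/p}.
\]
Thus $(x_n)$ is equivalent to the canonical basis of $\ell_p$. Let $E_0$ be its closed span, which is isomorphic to $\ell_p$.

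For $x=\sum_n a_nx_n$ with finitely many nonzero coefficients, the two estimates combine to give
\[
 \norm{Tx}\geq c\Bigl(\sum_n\abs{a_n}^p\Bigr)^{1/p}\geq c\norm{x}.
\]
By density and continuity this inequality holds on all of $E_0$. Hence $T|_{E_0}$ is an isomorphic embedding, and $T$ fixes a copy of $\ell_p$.

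\emph{Main point.} The only substantive input is the Schur $p$-property of the target space $\Fp(M)$, given by Theorem~\ref{thm:main}. The $p$-convexity of $E$ supplies the automatic upper $\ell_p$-estimate for $(x_n)$, which is what allows the lower estimate obtained in $\Fp(M)$ to be pulled back to $E$. No step beyond this transfer presents a real obstacle.
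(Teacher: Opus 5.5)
Your proposal is correct and follows essentially the same route as the paper: extract an $\varepsilon$-separated sequence $(Tx_n)$ with $x_n\in B_E$, apply Theorem~\ref{thm:main}, pull the lower estimate back through $\norm{T}$ using the automatic upper $\ell_p$-estimate from $p$-subadditivity, and conclude that $T$ is bounded below on the closed span of $(x_n)$. Your mutual-exclusivity argument just spells out the paper's one-line remark that a compact operator cannot be bounded below on an infinite-dimensional closed subspace.
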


\begin{proof}
Assume that $T$ is not compact. Then $T(B_E)$ is not relatively compact and,
since $\Fp(M)$ is complete, it is not totally bounded. Consequently, there
are $\varepsilon>0$ and $(x_n)\subseteq B_E$ such that
\[
 \norm{Tx_n-Tx_m}\geq\varepsilon
 \qquad(n\ne m).
\]
By Theorem~\ref{thm:main}, after passing to a subsequence there is a constant
$c>0$ such that
\begin{equation}\label{eq:operator-lower-image}
 c\left(\sum_n|a_n|^p\right)^{1/p}
 \leq\norm{\sum_na_nTx_n}
\end{equation}
for every finitely supported scalar sequence $(a_n)$. On the other hand,
$p$-subadditivity and $x_n\in B_E$ give
\begin{equation}\label{eq:operator-upper-domain}
 \norm{\sum_na_nx_n}^p
 \leq\sum_n|a_n|^p\norm{x_n}^p
 \leq\sum_n|a_n|^p.
\end{equation}
Since
\[
 \norm{\sum_na_nTx_n}
 \leq\norm T\,\norm{\sum_na_nx_n},
\]
\eqref{eq:operator-lower-image} and
\eqref{eq:operator-upper-domain} show that $(x_n)$ is equivalent to the
canonical basis of $\ell_p$. Moreover,
\[
 \norm{T\sum_na_nx_n}
 \geq c\left(\sum_n|a_n|^p\right)^{1/p}
 \geq c\norm{\sum_na_nx_n},
\]
so $T$ is bounded below on $\olsp\{x_n:n\in\NN\}$. Thus $T$ fixes a copy of
$\ell_p$.

Conversely, a compact operator cannot be bounded below on an
infinite-dimensional closed subspace. Hence the alternatives are mutually
exclusive.
\end{proof}

\begin{corollary}\label{cor:q-to-free-compact}
Let $p<q\leq1$, let $E$ be a $q$-Banach space, and let $M$ be as above. Then every bounded linear operator $E\to\Fp(M)$ is compact.
\end{corollary}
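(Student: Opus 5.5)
The plan is to combine the operator dichotomy, Proposition~\ref{prop:operator-dichotomy}, with the fact that $\ell_p$ cannot sit isomorphically inside a $q$-Banach space, Lemma~\ref{lem:ellp-not-qBanach}. Let $T\colon E\to\Fp(M)$ be bounded and linear. Since $p<q$, every $q$-Banach space is a $p$-Banach space: for $a,b\geq0$ we have $(a^p+b^p)^{1/p}\geq(a^q+b^q)^{1/q}$, so $q$-subadditivity gives $\norm{x+y}^p\leq\norm x^p+\norm y^p$. Hence $E$ is an admissible domain for Proposition~\ref{prop:operator-dichotomy}, and $T$ is either compact or fixes a copy of $\ell_p$.

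It therefore suffices to rule out the second alternative. Suppose $E_0\subseteq E$ is a closed subspace isomorphic to $\ell_p$. The restriction of the $q$-norm of $E$ to $E_0$ is still a $q$-norm, and $E_0$ is complete because it is closed in $E$. So $E_0$ is a $q$-Banach space, and transporting its quasi-norm through the isomorphism with $\ell_p$ produces an equivalent $q$-norm on $\ell_p$. This contradicts Lemma~\ref{lem:ellp-not-qBanach}. Hence no copy of $\ell_p$ is fixed, and the dichotomy forces $T$ to be compact.

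The argument needs no new tool. The only point requiring care is the first step: the dichotomy is stated for $p$-Banach domains, so one must check that $q$-Banach spaces qualify. This is the elementary inequality between $\ell_p$ and $\ell_q$ quasi-norms of the pair $(\norm x,\norm y)$ noted above, which the preliminaries also record.
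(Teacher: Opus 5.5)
Your proof is correct and follows the same approach as the paper. A noncompact operator would, by Proposition~\ref{prop:operator-dichotomy}, produce a copy of $\ell_p$ inside the $q$-Banach space $E$, contradicting Lemma~\ref{lem:ellp-not-qBanach}; your extra checks (that $q$-Banach spaces are $p$-Banach, and that restricting and transporting the $q$-norm gives an equivalent $q$-norm on $\ell_p$) are the details the paper leaves implicit.
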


\begin{proof}
If an operator $E\to\Fp(M)$ were noncompact, then
Proposition~\ref{prop:operator-dichotomy} would produce a copy of $\ell_p$
in $E$, contrary to Lemma~\ref{lem:ellp-not-qBanach}.
\end{proof}

\subsection{The Albiac--Kalton lifting prediction}

For a $p$-Banach space $X$, Albiac and Kalton's $p$-Lipschitz lifting
property is equivalent to the existence of a bounded linear right inverse
of the barycenter map $\beta_X:\Fp(X)\to X$; this is the equivalence
\textup{(i)}$\Leftrightarrow$\textup{(ii)} in
\cite[Proposition~4.5]{AlbiacKalton}, where the free space is denoted
by $\text{\AE}_p(X)$.

\begin{corollary}\label{cor:AK-lifting}
Let $0<p<q\leq1$. No infinite-dimensional $q$-Banach space has the
$p$-Lipschitz lifting property.
\end{corollary}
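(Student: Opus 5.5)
Suppose, towards a contradiction, that $X$ is an infinite-dimensional $q$-Banach space with the $p$-Lipschitz lifting property. By the equivalence (i)$\Leftrightarrow$(ii) in \cite[Proposition~4.5]{AlbiacKalton}, there is a bounded linear operator $S\colon X\to\Fp(X)$ with $\beta_X S=\Id_X$. I will show that $S$ must be compact, and that this is incompatible with $\beta_X S=\Id_X$ on an infinite-dimensional space.

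First I fix the metric structure on the base. The $q$-norm of $X$ satisfies $\norm{x+y}^q\leq\norm{x}^q+\norm{y}^q$, so $d(x,y)=\norm{x-y}$ is a $q$-metric on $X$, with base point $0$. Since $p<q$, the space $M=(X,d,0)$ is a pointed $q_0$-metric space with $q_0=q$ and $p<q_0\leq1$. Hence $M$ satisfies the standing assumption of this section, and $\Fp(X)=\Fp(M)$ is exactly a space covered by Theorem~\ref{thm:main}. It is also a $p$-metric space, which is the setting in which Albiac and Kalton define $\beta_X$. The one point I will check carefully is that the free $p$-space appearing in \cite{AlbiacKalton} is the same as $\Fp(X)$ with this metric. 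Both are built on the $p$-metric structure of $X$ via the universal property for $p$-Banach targets, so the two spaces should coincide.

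Next, since $X$ is a $q$-Banach space, Corollary~\ref{cor:q-to-free-compact} (with $E=X$) shows that $S\colon X\to\Fp(X)$ is compact. Alternatively, one can apply Proposition~\ref{prop:operator-dichotomy} and rule out the alternative that $S$ fixes a copy of $\ell_p$, using Lemma~\ref{lem:ellp-not-qBanach} together with the fact that closed subspaces of $q$-Banach spaces are $q$-Banach spaces. The barycenter map $\beta_X$ is bounded, since it is the linearization of $\Id_X$, which is $1$-Lipschitz. Therefore $\Id_X=\beta_X S$ is compact. This means $B_X$ is relatively compact in $X$.

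Finally, I need the Riesz-type fact that a quasi-Banach space whose closed unit ball is relatively compact is finite-dimensional. For quasi-Banach spaces, local compactness forces finite dimension, since a Hausdorff topological vector space with a compact neighborhood of zero is finite-dimensional. A direct proof is also easy. The ball $B_X$ is totally bounded, so it is covered by finitely many translates $x_i+\frac12 B_X$. Let $F=\spn\{x_i\}$. Iterating gives $B_X\subseteq F+2^{-n}B_X$ for every $n$, so $B_X\subseteq\overline F=F$ and $X=F$. This contradicts the infinite-dimensionality of $X$. I expect no genuine obstacle here; the only delicate step is matching the Albiac--Kalton formulation of the lifting property and their free space $\text{\AE}_p(X)$ with $\Fp(X)$ as used above.
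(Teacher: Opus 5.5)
Your proof is correct and follows essentially the paper's route: both take the bounded linear lifting $S\colon X\to\Fp(X)$ with $\beta_XS=\Id_X$ from \cite[Proposition~4.5]{AlbiacKalton} and then derive a contradiction from the main theorem. The only difference is the last step. You go through Corollary~\ref{cor:q-to-free-compact} and a Riesz-type lemma, whereas the paper notes directly that $\norm{x}=\norm{\beta_XSx}\leq\norm{Sx}$. Hence $S$ embeds $X$ isomorphically into $\Fp(X)$, which contradicts Corollary~\ref{cor:no-q-Banach} without the compactness detour.
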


\begin{proof}
Let $X$ be an infinite-dimensional $q$-Banach space. Its quasi-norm
distance $d(x,y)=\norm{x-y}$ is a $q$-metric, and $X$ is also a
$p$-Banach space. The identity map on $X$ has the contractive
linearization
\[
 \beta_X:\Fp(X,d)\longrightarrow X,
 \qquad \beta_X\delta_X(x)=x.
\]
If $X$ had the $p$-Lipschitz lifting property, the cited proposition would
give a bounded linear $S:X\to\Fp(X,d)$ with $\beta_XS=\Id_X$.
For every $x\in X$,
\[
 \norm x=\norm{\beta_XSx}\leq\norm{Sx}\leq\norm S\,\norm x.
\]
Thus $S$ would embed $X$ isomorphically into $\Fp(X,d)$, contrary to
Corollary~\ref{cor:no-q-Banach}.
\end{proof}

This proves the prediction in \cite[p.~334]{AlbiacKalton} throughout
the stated range $p<q\leq1$, with no separability assumption on $X$.

\subsection{A universal separable space with no Banach subspaces}

\begin{corollary}\label{cor:universal-Fpc0}
The separable $p$-Banach space
\[
 \mathbb U_p=\Fp(c_0)
\]
is bi-Lipschitz universal for all separable metric spaces. Nevertheless,
for every $p<q\leq1$ it contains no infinite-dimensional $q$-Banach subspace, and
every bounded linear operator from a $q$-Banach space into $\mathbb U_p$ is
compact.
\end{corollary}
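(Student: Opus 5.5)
The corollary has three claims: separability of $\mathbb U_p$, bi-Lipschitz universality for separable metric spaces, and the two linear obstructions. I would treat them in that order.

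\textbf{Separability.} Since $c_0$ is separable, fix a countable dense set $Q\subseteq c_0$ containing $0$. By \eqref{eq:completion-isometry}, or directly by \cite[Proposition~4.17]{AACD}, $\Fp(c_0)=\Fp(Q;c_0)$. Hence rational linear combinations of the vectors $\delta_{c_0}(x)$, $x\in Q$, are dense. So $\mathbb U_p$ is separable.

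\textbf{Universality.} By Aharoni's theorem, every separable metric space $(N,\sigma)$ admits a bi-Lipschitz embedding $f\colon N\to c_0$, with distortion bounded by a universal constant (e.g.\ $3$ in Kalton–Lancien's sharp form). After translating, we may assume $f$ is pointed for a chosen base point of $N$. Since $\delta_{c_0}\colon c_0\to\Fp(c_0)$ is an isometry for the norm metric of $c_0$, the composition $\delta_{c_0}\circ f$ is a bi-Lipschitz embedding of $N$ into $\mathbb U_p$. Here the quasi-norm distance on $\mathbb U_p$ is used, with the same Lipschitz constants.

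If one prefers the metric $d(u,v)=\norm{u-v}^p$, which is a genuine metric on a $p$-Banach space, then universality should be read for the quasi-metric $\norm{u-v}$. I would state this convention explicitly. Of the three parts, this is the only one needing care: it relies entirely on quoting Aharoni's theorem and on matching the paper's meaning of ``bi-Lipschitz universal.''

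\textbf{Linear obstructions.} Take $M=c_0$ with its norm distance. This is a metric, i.e.\ a $q_0$-metric with $q_0=1>p$, so the standing assumption of this section holds. Then Corollary~\ref{cor:no-q-Banach} shows that $\mathbb U_p$ contains no infinite-dimensional $q$-Banach subspace for any $p<q\leq1$. Corollary~\ref{cor:q-to-free-compact} gives compactness of every bounded operator from a $q$-Banach space into $\mathbb U_p$.
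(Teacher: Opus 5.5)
Your proposal is correct and follows essentially the same route as the paper: separability from that of $c_0$, and Aharoni's embedding composed with the isometric Dirac map $\delta_{c_0}$. The linear assertions are then read off from Corollaries~\ref{cor:no-q-Banach} and~\ref{cor:q-to-free-compact} with $M=c_0$. One harmless slip: as far as I recall, the sharp constant in Kalton--Lancien's version of Aharoni's theorem is $2$, and the constant $3$ is Pelant's. No quantitative bound is needed here anyway.
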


\begin{proof}
The space $\Fp(c_0)$ is separable because $c_0$ is separable. Let $S$ be a
separable pointed metric space. By Aharoni's theorem \cite{Aharoni}, there is
a bi-Lipschitz embedding $f:S\to c_0$. Replacing $f(x)$ by
$f(x)-f(0)$, we may assume that $f$ is pointed. Since the canonical map
$\delta_{c_0}:c_0\to\Fp(c_0)$ is isometric, the composition
$\delta_{c_0}f$ is a bi-Lipschitz embedding of $S$ into $\mathbb U_p$.
The linear assertions follow from Corollaries~\ref{cor:no-q-Banach} and
\ref{cor:q-to-free-compact}.
\end{proof}

\subsection{Lipschitz structure does not determine linear structure}

Let $p<q\leq1$ and let $X$ be a $q$-Banach space, equipped with its
quasi-norm distance. The identity map on $X$ linearizes to the
contractive barycenter map (of norm one when $X\ne\{0\}$)
\begin{equation}\label{eq:barycenter-map}
 \beta_X:\Fp(X)\longrightarrow X,
 \qquad
 \beta_X\delta_X(x)=x.
\end{equation}
Put
\[
 Z_X=\ker\beta_X.
\]
We equip $X\oplus Z_X$ with the $p$-sum quasi-norm
\[
 \norm{(x,z)}_{X\oplus_pZ_X}
 =\bigl(\norm x^p+\norm z^p\bigr)^{1/p}.
\]

\begin{proposition}[Nonlinear splitting]\label{prop:nonlinear-splitting}
For every $q$-Banach space $X$ with $p<q\leq1$, the maps
\begin{align*}
 \Phi_X:\Fp(X)&\longrightarrow X\oplus_pZ_X,
 &\Phi_X(\mu)&=
 \bigl(\beta_X\mu,\mu-\delta_X(\beta_X\mu)\bigr),\\
 \Psi_X:X\oplus_pZ_X&\longrightarrow\Fp(X),
 &\Psi_X(x,z)&=\delta_X(x)+z
\end{align*}
are mutually inverse bi-Lipschitz maps. More precisely,
\[
 \Lip(\Psi_X)\leq1,
 \qquad
 \Lip(\Phi_X)\leq3^{1/p}.
\]
\end{proposition}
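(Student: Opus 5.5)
We verify the algebraic identities first and then estimate each map separately. The map $\Phi_X$ is well defined: $\beta_X$ is linear and $\beta_X\delta_X(x)=x$, so $\beta_X(\mu-\delta_X(\beta_X\mu))=\beta_X\mu-\beta_X\mu=0$, and hence the second coordinate lies in $Z_X$. For $(x,z)\in X\oplus_pZ_X$ we have $\beta_X\Psi_X(x,z)=x+\beta_Xz=x$. Therefore
\[
\Phi_X\Psi_X(x,z)=\bigl(x,\delta_X(x)+z-\delta_X(x)\bigr)=(x,z).
\]
In the other order, $\Psi_X\Phi_X(\mu)=\delta_X(\beta_X\mu)+\mu-\delta_X(\beta_X\mu)=\mu$. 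So the two maps are mutual inverses, and only the Lipschitz bounds remain to be checked.

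For $\Psi_X$, take $(x,z),(x',z')$. The $p$-triangle inequality in $\Fp(X)$ gives
\[
\norm{\Psi_X(x,z)-\Psi_X(x',z')}^p\leq\norm{\delta_X(x)-\delta_X(x')}^p+\norm{z-z'}^p.
\]
Since $\delta_X$ is an isometry for the quasi-norm distance $d(x,x')=\norm{x-x'}$, which is a $q$-metric and hence a $p$-metric, the first term equals $\norm{x-x'}^p$. The right-hand side is then exactly $\norm{(x,z)-(x',z')}_{X\oplus_pZ_X}^p$, so $\Lip(\Psi_X)\leq1$.

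For $\Phi_X$, take $\mu,\nu\in\Fp(X)$ and put $x=\beta_X\mu$, $y=\beta_X\nu$. Because $\norm{\beta_X}\leq1$, the first coordinate satisfies $\norm{x-y}\leq\norm{\mu-\nu}$. For the second coordinate, the $p$-triangle inequality and the isometry of $\delta_X$ give
\[
\norm{(\mu-\nu)-(\delta_X(x)-\delta_X(y))}^p\leq\norm{\mu-\nu}^p+\norm{x-y}^p\leq2\norm{\mu-\nu}^p.
\]
Adding the two coordinate estimates in the $p$-sum yields at most $3\norm{\mu-\nu}^p$. Hence $\Lip(\Phi_X)\leq3^{1/p}$.

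The only step that needs care is the isometry of $\delta_X$ in the nonlocally convex setting. It holds because $X$, equipped with $\norm{x-y}$, is a pointed $p$-metric space whenever $p<q$, so the construction of $\Fp(X)$ from \cite[Theorem~4.5]{AACD} applies. There is no other real obstacle.
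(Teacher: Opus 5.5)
Your proof is correct and follows the paper's argument essentially verbatim. Both verify the two inverse identities directly, and both get $\Lip(\Psi_X)\leq1$ from the $p$-triangle inequality and the isometry of $\delta_X$. For $\Phi_X$, both use the coordinatewise bound $\norm{\beta_X u}^p+\bigl(\norm{u}^p+\norm{\beta_X u}^p\bigr)\leq 3\norm{u}^p$ with $u=\mu-\nu$, which gives $\Lip(\Phi_X)\leq3^{1/p}$.
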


\begin{proof}
Since $\beta_X\delta_X=\Id_X$, the second coordinate of $\Phi_X(\mu)$
belongs to $Z_X$. Direct computation gives
\[
 \Psi_X\Phi_X(\mu)=\mu
 \qquad\text{and}\qquad
 \Phi_X\Psi_X(x,z)=(x,z).
\]
For $(x,z),(y,w)\in X\oplus_pZ_X$, the $p$-triangle inequality and the
isometry of $\delta_X$ give
\[
 \norm{\Psi_X(x,z)-\Psi_X(y,w)}^p
 \leq\norm{x-y}^p+\norm{z-w}^p.
\]
Thus $\Lip(\Psi_X)\leq1$.

Now let $\mu,\nu\in\Fp(X)$ and put $u=\mu-\nu$. Contractivity of $\beta_X$
and the isometry of $\delta_X$ yield
\begin{align*}
 &\norm{\Phi_X(\mu)-\Phi_X(\nu)}^p\\
 &\quad=
 \norm{\beta_Xu}^p
 +\norm{u-\bigl(\delta_X(\beta_X\mu)
                 -\delta_X(\beta_X\nu)\bigr)}^p\\
 &\quad\leq
 \norm u^p+\norm u^p+\norm{\beta_Xu}^p
 \leq3\norm u^p.
\end{align*}
Hence $\Lip(\Phi_X)\leq3^{1/p}$.
\end{proof}

\begin{corollary}\label{cor:not-Lipschitz-determined}
For every infinite-dimensional $q$-Banach space $X$ with $p<q\leq1$,
\[
 \Fp(X)\simeq_{\mathrm{Lip}}X\oplus_pZ_X,
\]
but the two $p$-Banach spaces are not linearly isomorphic.
\end{corollary}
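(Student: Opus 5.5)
The corollary has two parts: the Lipschitz equivalence and the failure of linear isomorphism.

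The Lipschitz equivalence is already contained in Proposition~\ref{prop:nonlinear-splitting}. The maps $\Phi_X$ and $\Psi_X$ are mutually inverse, with $\Lip(\Psi_X)\leq1$ and $\Lip(\Phi_X)\leq3^{1/p}$. Hence they give a bi-Lipschitz bijection between $\Fp(X)$ and $X\oplus_pZ_X$. Note that $Z_X$ is a closed subspace of $\Fp(X)$, being the kernel of the bounded operator $\beta_X$, so it is a $p$-Banach space. Consequently $X\oplus_pZ_X$ is a $p$-Banach space, and the statement makes sense.

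For the linear part, I argue by contradiction. Suppose $T\colon X\oplus_pZ_X\to\Fp(X)$ is a linear isomorphism. The map $x\mapsto(x,0)$ is an isometric linear embedding of $X$ into $X\oplus_pZ_X$. Composing it with $T$ gives an isomorphic embedding of $X$ onto a closed subspace of $\Fp(X)$. That subspace is infinite-dimensional and isomorphic to the $q$-Banach space $X$.

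This contradicts Corollary~\ref{cor:no-q-Banach}. That corollary applies here because the base $X$, with the quasi-norm distance $\norm{x-y}$, is a $q$-metric space with $p<q\leq1$, which is exactly the standing assumption on $M$ in this section (take $q_0=q$). It says $\Fp(X)$ contains no isomorphic copy of an infinite-dimensional $q$-Banach space.

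An equivalent route is through Corollary~\ref{cor:AK-lifting}. A linear isomorphism compatible with the splitting would yield a bounded linear right inverse of $\beta_X$. However, the direct subspace argument above is cleaner, since it does not require the isomorphism to respect $\beta_X$.

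There is no real obstacle. The only points to check are that the embedded copy of $X$ is closed, which holds because it is the image of a complete space under an isomorphic embedding, and that the hypotheses of Corollary~\ref{cor:no-q-Banach} are met.
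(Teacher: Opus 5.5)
Your proposal is correct and follows essentially the same route as the paper. The Lipschitz equivalence is read off from Proposition~\ref{prop:nonlinear-splitting}. Linear isomorphism is ruled out because $X\oplus_pZ_X$ contains the isometric copy $X\oplus\{0\}$ of the infinite-dimensional $q$-Banach space $X$, while Corollary~\ref{cor:no-q-Banach} forbids such a subspace in $\Fp(X)$; that corollary applies since the quasi-norm distance on $X$ is a $q$-metric.
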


\begin{proof}
The Lipschitz equivalence is
Proposition~\ref{prop:nonlinear-splitting}. The space $X\oplus_pZ_X$
contains the $1$-complemented $q$-Banach subspace $X\oplus\{0\}$, whereas
Corollary~\ref{cor:no-q-Banach} says that $\Fp(X)$ contains no
infinite-dimensional $q$-Banach subspace.
\end{proof}

The splitting in Proposition~\ref{prop:nonlinear-splitting} is the mechanism
behind the original examples of Albiac and Kalton
\cite{AlbiacKalton}. The main theorem shows that the difference between the
two linear structures is systematic and maximal: it occurs for every
infinite-dimensional $q$-Banach space $X$ with $p<q\leq1$.
In particular, the Banach-space examples include
\[
 \begin{split}
 \Fp(\ell_2)&\simeq_{\mathrm{Lip}}\ell_2\oplus_pZ_{\ell_2},\\
 \Fp(c_0)&\simeq_{\mathrm{Lip}}c_0\oplus_pZ_{c_0},\\
 \Fp(\ell_1)&\simeq_{\mathrm{Lip}}\ell_1\oplus_pZ_{\ell_1},\\
 \Fp(L_1[0,1])&\simeq_{\mathrm{Lip}}L_1[0,1]\oplus_pZ_{L_1[0,1]}.
 \end{split}
\]
In each line the spaces are Lipschitz isomorphic but not linearly
isomorphic. Taking $X$ to be an infinite-dimensional Banach space, we
deduce that none of the following properties is determined by the
Lipschitz structure of a $p$-Banach space:
\begin{enumerate}[label=\textup{(\roman*)}]
\item the Schur $p$-property;
\item $\ell_p$-saturation;
\item the absence of infinite-dimensional Banach, or more generally
$q$-Banach, subspaces for $p<q\leq1$;
\item the property that every operator from a Banach space into the given
space is compact;
\item the existence of a complemented copy of a prescribed
infinite-dimensional Banach space, for example $\ell_2$, $c_0$, $\ell_1$,
or $L_1[0,1]$.
\end{enumerate}
Indeed, $\Fp(X)$ has the first four properties and has no subspace as in
\textup{(v)}, while $X\oplus_pZ_X$ contains a complemented copy of $X$ and
therefore fails the corresponding assertions.

\subsection{Strong Schur \texorpdfstring{$p$}{p}-spaces with trivial dual}

\begin{theorem}\label{thm:trivial-dual}
Let $0<p<q<1$ and put
\[
 E_{p,q}=\Fp([0,1],d_q),\qquad d_q(s,t)=|s-t|^{1/q}.
\]
Then $E_{p,q}$ is an infinite-dimensional separable $p$-Banach space
with the following properties:
\begin{enumerate}[label=\textup{(\roman*)}]
\item it has the $K_{p,q}$-strong Schur $p$-property;
\item $E_{p,q}^*=\{0\}$ and its Banach envelope is zero;
\item it fails the approximation property;
\item it is $\ell_p$-saturated but has no complemented copy of $\ell_p$;
\item its $q$-Banach envelope is isometric to $L_q[0,1]$.
\end{enumerate}
\end{theorem}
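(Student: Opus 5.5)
Most items follow by combining the structural results already established with a few standard facts about snowflaked intervals. The order I would follow is: (i), then (v), then (ii) as a consequence of (v), then (iv), and finally (iii).

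\emph{Items (i) and (v).} Since $d_q^q(s,t)=|s-t|$ is a metric, $([0,1],d_q)$ is a compact pointed $q$-metric space (base point $0$). Hence (i) is immediate from Theorem~\ref{thm:compact-strong}. Separability follows from the density of rational Dirac combinations. Infinite dimensionality follows because finite families of distinct nonzero Dirac vectors are linearly independent.

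For (v), I would identify the $q$-Banach envelope with $\mathcal F_q([0,1],d_q)$: the universal property of the envelope and of $\Fp$ match, since Lipschitz maps into $q$-Banach spaces linearize through $\mathcal F_q$. It then remains to identify $\mathcal F_q([0,1],|s-t|^{1/q})$ isometrically with $L_q[0,1]$. I would use the map $\delta(t)\mapsto \chi_{[0,t]}$, which satisfies
\[
\norm{\chi_{[0,s]}-\chi_{[0,t]}}_{L_q}=|s-t|^{1/q}=d_q(s,t),
\]
so it is isometric on Diracs and has dense span. To show the linearization is an isometry, I would check that every Lipschitz map $f$ into a $q$-Banach space $Y$ extends from step functions with the same norm. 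The key input is $q$-subadditivity over disjoint intervals:
\[
\norm{\sum_i a_i\,(f(t_i)-f(t_{i-1}))}^q\le \sum_i |a_i|^q\,\Lip(f)^q\,|t_i-t_{i-1}|=\Lip(f)^q\,\norm{\textstyle\sum_i a_i\chi_{(t_{i-1},t_i]}}_q^q .
\]
This is the known result of Albiac--Kalton, and I would cite it if it is available in the references.

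\emph{Item (ii).} The Banach envelope of $E_{p,q}$ equals the Banach envelope of its $q$-envelope, namely of $L_q$. Since $L_q^*=\{0\}$ for $q<1$, that envelope is zero, and hence $E_{p,q}^*=\{0\}$. This is where the hypothesis $q<1$ is used.

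\emph{Item (iv).} $\ell_p$-saturation comes from Lemma~\ref{lem:Schur-saturated}. For the absence of a complemented copy of $\ell_p$, I would argue as follows. A complemented copy of $\ell_p$ would give a bounded projection $P$ onto $Y\simeq\ell_p$. But $\ell_p$ has a separating dual, namely $\ell_\infty$, so composing the coordinate functionals with $P$ yields nonzero functionals on $E_{p,q}$. This contradicts (ii).

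\emph{Item (iii), the main obstacle.} The failure of the approximation property is, I expect, the hardest part and the least derivable from what precedes. The route I would try first is to cite the known result that $\mathcal F_p$ over a snowflaked interval, or over a $q$-snowflake of a compact set, fails the approximation property. Alternatively, I would try to exploit the trivial dual directly. Suppose $E_{p,q}$ had the AP. Then the identity would be approximable on compact sets by finite-rank operators. Each finite-rank operator $T=\sum_k x_k^*\otimes y_k$, however, requires functionals $x_k^*\in E_{p,q}^*=\{0\}$, so every finite-rank operator on $E_{p,q}$ is zero. This contradicts $E_{p,q}\neq\{0\}$, provided we use the standard definition of the AP via finite-rank continuous operators. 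Once it is checked that this definition is the one in force, this argument settles (iii) cleanly.
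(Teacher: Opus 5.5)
Your proposal is correct and follows essentially the same route as the paper: (i) comes from Theorem~\ref{thm:compact-strong}, (v) from identifying the $q$-Banach envelope with $\mathcal F_q([0,1],d_q)\equiv L_q[0,1]$, and (iii)--(iv) from the fact that a trivial dual forces every finite-rank operator, and every projection onto a copy of $\ell_p$, to be zero. The differences are minor: you derive (ii) from (v) using transitivity of envelopes and $L_q^*=\{0\}$ where the paper cites the dual computation, you get infinite dimensionality from linear independence of Dirac vectors rather than from the dense range into $L_q$, and you sketch the isometry $\mathcal F_q([0,1],d_q)\equiv L_q[0,1]$ instead of citing it; your first suggested route for (iii), citing an AP-failure result, is unnecessary, since your trivial-dual argument is exactly the paper's.
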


\begin{proof}
Put $M=([0,1],d_q)$. This is a compact $q$-metric space, so
Theorem~\ref{thm:compact-strong} gives \textup{(i)}, and
Lemma~\ref{lem:Schur-saturated} gives $\ell_p$-saturation.
The Dirac vectors at rational points have dense linear span,
so $E_{p,q}$ is separable.

Since $M$ is already a $q$-metric space,
\cite[Proposition~4.20\textup{(a)}]{AACD} identifies the
$q$-Banach envelope of $E_{p,q}$ with $\mathcal F_q(M)$,
which is isometric to $L_q[0,1]$ by
\cite[Theorem~4.13]{AACD}. This proves \textup{(v)} and
also infinite dimension, since the envelope map has dense
range.

By \cite[Lemma~9.2 and Remark~9.5 in the arXiv version]
{AABWGreedy}, the dual of $E_{p,q}$ is isometric to
$L_q[0,1]^*$, and its Banach envelope is the Banach envelope
of $L_q[0,1]$. Both are zero because $q<1$, proving
\textup{(ii)}.

Every bounded linear map $T:E_{p,q}\to Y$ into a space
whose dual separates points is zero: indeed, $\phi T=0$
for every $\phi\in Y^*$.
Taking $Y$ to be the finite-dimensional range of a
finite-rank operator shows that every such operator on
$E_{p,q}$ is zero. Hence the identity cannot be approximated
even on a singleton containing a nonzero vector, proving
\textup{(iii)}.
Taking $Y$ to be a copy of $\ell_p$, whose coordinate
functionals separate points, excludes a bounded projection
onto $Y$. Together with $\ell_p$-saturation, this proves
\textup{(iv)}.
\end{proof}

Theorem~\ref{thm:trivial-dual} answers positively the question about
Schur $p$-spaces with trivial dual posed immediately after
\cite[Question~6.2]{AABCSchur}. It also shows that even the strong
Schur $p$-property implies neither the approximation property nor
the existence of a complemented copy of $\ell_p$.

\begin{corollary}\label{cor:compact-without-BAP}
There is a compact metric space $K$ such that $\Fp(K)$ has the strong
Schur $p$-property but fails the bounded approximation property.
\end{corollary}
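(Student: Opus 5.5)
We need a compact metric space $K$ (so $q=1$) such that $\Fp(K)$ fails the bounded approximation property. Theorem~\ref{thm:compact-strong} then supplies the strong Schur $p$-property with constant $K_{p,1}$, so everything reduces to producing the failure of BAP.

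The plan is to transfer a known Banach-space counterexample into the $p$-setting through a complementation argument.

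\textbf{Step 1: the Banach counterexample.} By Godefroy--Ozawa there is a compact convex subset $K$ of a separable Banach space $X$ failing the BAP, with $K$ containing $0$. In their construction, $X$ is isometric to a $1$-complemented subspace of $\mathcal F_1(K)$. The complementation comes from a barycenter/retraction argument: $X$ embeds linearly into $\mathcal F_1(K)$ up to a scaling/dilation. Since $K$ spans $X$ densely and is convex, the barycenter map $\beta\colon\mathcal F_1(K)\to X$ is defined. A linear right inverse of $\beta$ is obtained on $\spn K$ through a dilation argument, which uses the Godefroy--Kalton linear lifting for separable Banach spaces.

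\textbf{Step 2: the $p$-analogue of the lifting.} Repeat the construction with $\Fp(K)$ in place of $\mathcal F_1(K)$, aiming to show that $X$, or a suitable non-BAP $p$-Banach space, is complemented in $\Fp(K)$. This is exactly where the argument should break down. Corollary~\ref{cor:AK-lifting} and Corollary~\ref{cor:no-q-Banach} forbid any infinite-dimensional Banach space from embedding in $\Fp(K)$. So $X$ itself cannot be complemented, and the linear lifting fails.

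\textbf{Step 3: complement the Banach envelope instead.} The cleaner route is to complement a $p$-Banach space whose failure of BAP is inherited. Concretely, I would start from a separable $p$-Banach space $Y$ failing BAP; for instance, $\Fp$ of a suitable compact set is known to fail the AP via Godefroy--Ozawa type constructions adapted by Albiac--Ansorena--C\'uth--Doucha. I would then realize $Y$ as complemented in $\Fp(K')$ for a compact metric $K'$. The standard mechanism is: if $N\subseteq M$ admits a Lipschitz retraction $r\colon M\to N$, then $\widehat{\delta_N r}$ composed with the canonical inclusion of Theorem~\ref{thm:canonical-input} exhibits $\Fp(N)$ as complemented in $\Fp(M)$, with constant at most $\Gamma_p\Lip(r)$. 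Since BAP passes to complemented subspaces, it suffices to find a compact metric $K$ for which $\Fp(K)$ itself fails BAP.

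\textbf{Step 4: use the existing literature.} I would take the compact metric space $K$ from the existing literature on Lipschitz-free $p$-spaces failing the BAP. Albiac--Ansorena--C\'uth--Doucha adapt Godefroy--Ozawa by taking $\Fp$ over a compact subset of a Banach space failing AP and using a Lipschitz retraction onto finite-dimensional pieces. With that $K$, Theorem~\ref{thm:compact-strong} finishes the proof.

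\textbf{Main obstacle.} The hard step is justifying that $\Fp(K)$ fails BAP for a compact metric $K$, not merely the $q$-metric analogue. Theorem~\ref{thm:trivial-dual} gives failure of AP only for $q<1$, using the trivial dual. For $q=1$ the dual is nontrivial, so the argument must come from a genuine Godefroy--Ozawa style construction in the $p$-setting. I would cite or adapt that construction rather than reprove it.
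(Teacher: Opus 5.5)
You correctly reduce to failure of the BAP and correctly invoke Theorem~\ref{thm:compact-strong} with $q=1$. However, the proposal never proves that $\Fp(K)$ fails the bounded approximation property, and that is the only real content of the corollary.

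\begin{itemize}
\item Step~3 ends by saying it suffices to find a compact metric $K$ with $\Fp(K)$ failing the BAP. That is the statement itself, so the step reduces the problem to itself.
\item Step~4 defers to an unidentified result in the literature. Its described mechanism, Lipschitz retractions onto finite-dimensional pieces, does not evidently produce failure of the BAP.
\item Step~2 correctly observes that a Banach space cannot be complemented in $\Fp(K)$, but this is a dead end.
\item Your diagnosis of the obstacle is wrong. No new Godefroy--Ozawa type construction in the $p$-setting is needed. The nontrivial dual in the case $q=1$ is not an obstruction; it is what makes a transfer from the Banach case possible.
\end{itemize}

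The missing idea is that the BAP passes from a quasi-Banach space $E$ to its Banach envelope $J\colon E\to\widehat E$.

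\begin{itemize}
\item Suppose $T_i\colon E\to E$ are finite-rank operators with $\sup_i\norm{T_i}\leq C$ and $T_ix\to x$ for every $x\in E$.
\item By the universal property of the envelope, each $JT_i$ extends to $\widehat T_i\colon\widehat E\to\widehat E$ with $\norm{\widehat T_i}\leq C$.
\item Each $\widehat T_i$ has finite rank, since its range lies in the closed finite-dimensional space $J(T_i(E))$.
\item Since $\widehat T_iJx=JT_ix\to Jx$, the density of $J(E)$ and the uniform bound $C$ give $\widehat T_i\to\Id$ pointwise on all of $\widehat E$.
\end{itemize}

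The Banach envelope of $\Fp(K)$ is $\mathcal F(K)$, by \cite[Proposition~4.20]{AACD} with $q=1$. So you can take directly the compact metric space $K$ of \cite[Corollary~2.2]{HLP}, for which the Banach free space $\mathcal F(K)$ fails the AP, hence the BAP. If $\Fp(K)$ had the BAP, so would $\mathcal F(K)$, a contradiction.

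This argument uses the uniform bound essentially. That is why only failure of the bounded approximation property, not of the approximation property, is obtained for a genuine metric base.
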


\begin{proof}
By \cite[Corollary~2.2]{HLP}, there is a compact metric space $K$
whose Banach free space $\mathcal F(K)$ fails the approximation
property. Theorem~\ref{thm:compact-strong} gives the strong Schur
$p$-property of $\Fp(K)$.

We verify that the bounded approximation property passes to the Banach
envelope. Suppose that a quasi-Banach space $E$ has a net of finite-rank
operators $T_i:E\to E$ with $\sup_i\norm{T_i}\leq C$ and
$T_i x\to x$ for every $x\in E$. Let $J:E\to\widehat E$ be its
Banach-envelope map. By the universal property of the envelope, the
operators $JT_i:E\to\widehat E$ extend to operators
$\widehat T_i:\widehat E\to\widehat E$ with
$\norm{\widehat T_i}\leq C$. Their ranges lie in the
finite-dimensional spaces $J(T_i(E))$. Thus $\widehat T_i$ has finite rank.
Furthermore,
\[
 \widehat T_iJx=JT_ix\longrightarrow Jx\qquad(x\in E).
\]
The uniform bound and density imply convergence to the identity on
all of $\widehat E$. Hence $\widehat E$ has the bounded approximation property.

Applied to $E=\Fp(K)$, this would give the bounded approximation
property, and therefore the approximation property, of its envelope
$\mathcal F(K)$, a contradiction. Thus $\Fp(K)$ fails the bounded
approximation property.
\end{proof}

This last argument establishes failure of the bounded approximation
property for an ordinary compact metric base. Failure of the
approximation property itself is established in
Theorem~\ref{thm:trivial-dual} for a compact $q$-metric base with $q<1$.
\subsection{Answers to questions on the Schur \texorpdfstring{$p$}{p}-property}

The main theorem and the preceding applications settle several questions
from \cite[Section~6]{AABCSchur}. In the following corollary, $M$
is an ordinary metric space, as in those questions.

\begin{corollary}\label{cor:answers-open-questions}
The following assertions hold.
\begin{enumerate}[label=\textup{(\roman*)},leftmargin=*]
\item \cite[Question~6.1]{AABCSchur} has a positive answer: in particular,
$\Fp([0,1])$ has the Schur $p$-property.
\item \cite[Question~6.2]{AABCSchur} has a negative answer. Indeed,
$\Fp([0,1])$ has the Schur $p$-property, while its Banach envelope is
$\mathcal F([0,1])\simeq L_1[0,1]$, which does not have the Schur property.
\item \cite[Question~6.5]{AABCSchur} has a positive answer: every bounded
set in $\Fp(M)$ which fails Kalton's property in the sense of
\cite[Definition~5.2]{AABCSchur} contains a sequence equivalent
to the canonical basis of $\ell_p$.
\item \cite[Question~6.6]{AABCSchur} has a positive answer, even without
boundedness, completeness, or discreteness assumptions on $M$.
\item No infinite-dimensional Banach space embeds linearly into
$\Fp(c_0)$, answering the problem mentioned in the discussion following
\cite[Question~6.1]{AABCSchur}.
\item The question about a Schur $p$-space with trivial dual posed after
\cite[Question~6.2]{AABCSchur} has a positive answer, even with the strong
Schur $p$-property.
\end{enumerate}
\end{corollary}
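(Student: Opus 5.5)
The plan is to verify the six items one at a time, each as a direct specialization of results already proved. Throughout, $M$ is an ordinary metric space, which is a $1$-metric space. Since $p<1$, the standing hypotheses of Section~\ref{sec:applications} hold with $q_0=1$.

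For \textup{(i)} and \textup{(iv)}, I would invoke Theorem~\ref{thm:main} with $q=1$. Question~6.6 asks for the Schur $p$-property under additional boundedness, completeness or discreteness hypotheses, and Theorem~\ref{thm:main} imposes none of these. The completion identification \eqref{eq:completion-isometry} removes completeness. In particular $\Fp([0,1])$ has the Schur $p$-property, which is the content of Question~6.1.

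For \textup{(ii)}, the positive side is again item~\textup{(i)}. For the negative side I would cite two facts. First, by \cite{AACD} the Banach envelope of $\Fp(M)$ is $\mathcal F(M)$ (the case $q=1$ of the envelope identification already used in the proof of Theorem~\ref{thm:trivial-dual}). Second, $\mathcal F([0,1])\simeq L_1[0,1]$ classically. I would then note that $L_1[0,1]$ fails the Schur property, since the Rademacher functions are weakly null but have norm one.

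For \textup{(iii)}, the only step needing care is matching definitions. \cite[Definition~5.2]{AABCSchur} is relative and phrased for metric bases, whereas Definition~\ref{def:Kalton} takes the ambient base itself. The first thing to try is to observe that, with ambient base $M$, failure in the sense of \cite{AABCSchur} yields $r,\eta$ for which no finite $F$ gives $W\subseteq\Fp([F]_r;M)+\eta B_{\Fp(M)}$, which is failure of Definition~\ref{def:Kalton}. For bounded $M$, Lemma~\ref{lem:non-Kalton} then gives the $\ell_p$-sequence.

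If Question~6.5 allows unbounded $M$, I would argue by contradiction. Suppose $W$ contains no $\ell_p$-sequence. Lemma~\ref{lem:localization-infinity} places $W$ within $\varepsilon$ of $\Fp(B_M(R);M)$. Alternatively, one can avoid Kalton's property altogether by using the stronger statement proved in Lemma~\ref{lem:ellinfty-q} together with the embedding in Theorem~\ref{thm:main}: a bounded set with no $\ell_p$-sequence is totally bounded, transferred through the isomorphism of Theorem~\ref{thm:canonical-input}. Total boundedness implies Kalton's property directly. Indeed, a finite $\eta/2$-net of finitely supported vectors has a common finite support $F$, and $[F]_r\supseteq F$. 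So any $W$ failing Kalton's property is not totally bounded, and hence contains an $\ell_p$-sequence. This is the step I expect to be the main obstacle, because it requires checking that the relative definition in \cite{AABCSchur} reduces to this case.

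For \textup{(v)}, I would apply Corollary~\ref{cor:no-q-Banach} with $q=1$ and $M=c_0$, or equivalently Corollary~\ref{cor:universal-Fpc0}. Item \textup{(vi)} is exactly Theorem~\ref{thm:trivial-dual}\textup{(i)}--\textup{(ii)}: the space $E_{p,q}$ has the strong Schur $p$-property and trivial dual.
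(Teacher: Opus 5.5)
Your proposal is correct and matches the paper's proof item by item. For \textup{(iii)}, the paper runs your chain in the contrapositive direction: failure of Kalton's property $\Rightarrow$ not relatively compact $\Rightarrow$ an infinite uniformly separated sequence $\Rightarrow$ an $\ell_p$-subsequence by Theorem~\ref{thm:main}. The definitional matching you flag as the main obstacle is settled simply by citing \cite[Lemma~5.3]{AABCSchur}, which says that relatively compact sets have Kalton's property in the sense of \cite[Definition~5.2]{AABCSchur}. The detour through Lemma~\ref{lem:ellinfty-q} and Theorem~\ref{thm:canonical-input} is also unnecessary: the Schur $p$-property of $\Fp(M)$ from Theorem~\ref{thm:main} already shows that a bounded set containing no $\ell_p$-sequence is totally bounded.
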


\begin{proof}
Assertions \textup{(i)}, \textup{(iv)}, and \textup{(v)} follow directly
from Theorem~\ref{thm:main} and Corollary~\ref{cor:no-Banach}. For
\textup{(ii)}, the identification of the Banach envelope and the fact that
$\mathcal F([0,1])\simeq L_1[0,1]$ are recalled in the discussion preceding
\cite[Question~6.2]{AABCSchur}; the latter space does not have the Schur
property.

For \textup{(iii)}, let $W\subseteq\Fp(M)$ be bounded and suppose that it
does not have Kalton's property. By \cite[Lemma~5.3]{AABCSchur}, every
relatively compact subset of $\Fp(M)$ has Kalton's property. Hence $W$ is
not relatively compact and therefore contains an infinite uniformly
separated sequence. Theorem~\ref{thm:main} supplies an $\ell_p$-subsequence.

Finally, \textup{(vi)} follows from Theorem~\ref{thm:trivial-dual}.
\end{proof}

\section{Note on the current version}
\noindent The present version of the manuscript is being posted at a relatively early stage prior to its submission to a  journal. The mathematical arguments in the proofs are close to its final form,  but the manuscript is still being polished: in particular, the exposition will be streamlined and we intend to select and emphasize those results which we believe form the most interesting contribution. A more detailed Introduction with background on the problems we tackle will also be provided in a later version.

One reason for making the present version publicly available already at this stage is the rapidly increasing use of AI systems to attack explicitly stated open problems. Such tools can now produce mathematical arguments considerably faster than they can be carefully checked, organized, and developed into a coherent mathematical contribution. We therefore prefer to make our results and their present proofs publicly available while continuing to improve the manuscript, in particular so that the provenance and priority of the results are clear.

\section{Statements and declarations}

\subsection*{Conflict of Interest}
The authors declare that they have no conflict of interest.

\subsection*{Data Availability} 
Since no datasets were generated or analyzed during the current study, data sharing does not apply to this article.

\subsection*{Generative AI use}
During the development and preparation of this manuscript, the authors made use of AI-assisted tools as a conversational aid for exploring ideas and possible approaches, as well as for editorial assistance. The selection, formulation, and presentation of the mathematical results are those of the authors.

\end{document}